\DeclareSymbolFont{cyrletters}{OT2}{wncyr}{m}{n}
\DeclareMathSymbol{\Sha}{\mathalpha}{cyrletters}{"58}
\newcommand\cyr{\renewcommand\rmdefault{wncyr}
\renewcommand\sfdefault{wncyss}
\renewcommand\encodingdefault{OT2}
\normalfont\selectfont}
\DeclareTextFontCommand{\textcyr}{\cyr}
\theoremstyle{plain}
\newtheorem{theorem}{Theorem}[section]
\newtheorem*{theorem-nn}{Theorem}
\newtheorem{lemma}[theorem]{Lemma}
\newtheorem{proposition}[theorem]{Proposition}
\newtheorem*{proposition-nn}{Proposition}
\newtheorem{corollary}[theorem]{Corollary}
\theoremstyle{definition}
\newtheorem{example}[theorem]{Example}
\newtheorem{remark}[theorem]{Remark}
\theoremstyle{remark}
\newcommand{\bZ}{\mathbbm{Z}}\newcommand{\bQ}{\mathbbm{Q}}
\newcommand{\bC}{\mathbbm{C}}
\newcommand{\bG}{\mathbbm{G}}\newcommand{\bF}{\mathbbm{F}}
\newcommand{\bA}{\mathbbm{A}}
\newcommand{\oa}{\overline{a}}
\newcommand{\ob}{\overline{b}}
\newcommand{\oc}{\overline{c}}
\newcommand{\oz}{\overline{z}}
\title{Hasse norm principle for Heisenberg extensions of degree $p^3$}
\author[A. Hoshi]{Akinari Hoshi}
\address{Department of Mathematics, Niigata University, Niigata 950-2181, Japan}
\email{hoshi@math.sc.niigata-u.ac.jp}
\author[A. Yamasaki]{Aiichi Yamasaki}
\address{Department of Mathematics, Kyoto University, Kyoto 606-8502, Japan}
\email{aiichi.yamasaki@gmail.com}
\thanks{{\it Key words and phrases.} 
Algebraic tori, norm one tori, Hasse norm principle, weak approximation, Tamagawa number.\\ 
This work was partially supported by JSPS KAKENHI Grant Numbers 
19K03418, 20H00115, 20K03511, 24K00519, 24K06647.
}
\subjclass[2010]{Primary 11E72, 12F20, 13A50, 14E08, 20C10, 20G15.}
\begin{document}
\maketitle
\begin{abstract}
Let $k$ be a global field and $p$ be an odd prime number. 
We give a necessary and sufficient condition 
for the Hasse norm principle for separable field extensions $K/k$, 
i.e. the determination of the Shafarevich-Tate group $\Sha(T)$ of 
the norm one tori $T=R^{(1)}_{K/k}(\bG_m)$ of $K/k$, 
with $[K:k]=p^3$ or $p^2$ 
when the Galois group 
of the Galois closure 
of $K/k$ is the Heisenberg group 
$E_p(p^3)\simeq (C_p)^2\rtimes C_p$ of order $p^3$, 
i.e. the extraspecial group of order $p^3$ with exponent $p$. 
As a consequence, we get the 
Tamagawa number $\tau(T)=p^2$, $p$ or $1$ 
via Ono's formula $\tau(T)=|H^1(k,\widehat{T})|/|\Sha(T)|$. 
\end{abstract}
%
%
\section{Introduction}\label{S1}

Let $k$ be a global field, 
i.e. a number field (a finite extension of $\bQ$) 
or a function field of an algebraic curve over 
$\bF_q$ (a finite extension of $\bF_q(t))$.
Let $K/k$ be a finite separable field extension and 
$\bA_K^\times$ be the idele group of $K$. 
We say that {\it the Hasse norm principle holds for $K/k$} 
if $(N_{K/k}(\bA_K^\times)\cap k^\times)/N_{K/k}(K^\times)=1$ 
where $N_{K/k}$ is the norm map. 
%
Hasse \cite[Satz, page 64]{Has31} proved that 
the Hasse norm principle holds for any cyclic extension $K/k$ 
but does not hold for bicyclic extension $\bQ(\sqrt{-39},\sqrt{-3})/\bQ$. 

Let $\overline{k}$ be a fixed separable closure of $k$. 
Let $T$ be an algebraic $k$-torus, 
i.e. a group $k$-scheme with fiber product (base change) 
$T\times_k \overline{k}=
T\times_{{\rm Spec}\, k}\,{\rm Spec}\, \overline{k}
\simeq (\bG_{m,\overline{k}})^n$; 
$k$-form of the split torus $(\bG_m)^n$. 
%
Let $E$ be a principal homogeneous space (= torsor) under $T$.  
{\it Hasse principle holds for $E$} means that 
if $E$ has a $k_v$-rational point for all completions $k_v$ of $k$, 
then $E$ has a $k$-rational point. 
The set $H^1(k,T)$ classifies all such torsors $E$ up 
to (non-unique) isomorphism. 
We take {\it the Shafarevich-Tate group} of $T$: 
\begin{align*}
\Sha(T)={\rm Ker}\left\{H^1(k,T)\xrightarrow{\rm res} \bigoplus_{v\in V_k} 
H^1(k_v,T)\right\}
\end{align*}
where $V_k$ is the set of all places of $k$ and 
$k_v$ is the completion of $k$ at $v$. 
Then 
Hasse principle holds for all torsors $E$ under $T$ 
if and only if $\Sha(T)=0$. 

Let $T=R^{(1)}_{K/k}(\bG_m)$ be the norm one torus of $K/k$,
i.e. the kernel of the norm map $R_{K/k}(\bG_m)\rightarrow \bG_m$ where 
$R_{K/k}$ is the Weil restriction 
(see Voskresenskii \cite[page 37, Section 3.12]{Vos98}). 
It is biregularly isomorphic to the norm hypersurface 
$f(x_1,\ldots,x_n)=1$ where 
$f\in k[x_1,\ldots,x_n]$ is the polynomial of total 
degree $n$ defined by the norm map $N_{K/k}:K^\times\to k^\times$. 
The norm one torus $T=R^{(1)}_{K/k}(\bG_m)$ has the 
Chevalley module $J_{G/H}$ as its character module 
and the field $L(J_{G/H})^G\simeq k(T)$ as its function field 
where 
$L$ is the splitting field of $T$ with 
$G={\rm Gal}(L/k)$ and $H={\rm Gal}(L/K)\lneq G$ 
and 
$J_{G/H}=(I_{G/H})^\circ={\rm Hom}_\bZ(I_{G/H},\bZ)$ 
is the dual lattice of $I_{G/H}={\rm Ker}\ \varepsilon$ and 
$\varepsilon : \bZ[G/H]\rightarrow \bZ$, 
$\sum_{\overline{g}\in G/H} a_{\overline{g}}\,\overline{g}\mapsto \sum_{\overline{g}\in G/H} a_{\overline{g}}$ 
is the augmentation map. 
We have the exact sequence $0\rightarrow \bZ\rightarrow \bZ[G/H]
\rightarrow J_{G/H}\rightarrow 0$ and ${\rm rank}_{\bZ}\, J_{G/H}=n-1$. 
More precisely, write $J_{G/H}=\oplus_{1\leq i\leq n-1}\bZ u_i$ and 
define the action of $G$ on $L(x_1,\ldots,x_{n-1})$ by 
$x_i^{\sigma}=\prod_{j=1}^{n-1} x_j^{a_{i,j}} (1\leq i\leq n-1)$ 
for any $\sigma\in G$, 
when $u_i^{\sigma}=\sum_{j=1}^{n-1} a_{i,j} u_j$ $(a_{i,j}\in\bZ)$. 
Then the invariant field $L(x_1,\ldots,x_{n-1})^G\simeq k(T)$: 
the function field of $T=R^{(1)}_{K/k}(\bG_m)$ 
(see Endo and Miyata \cite[Section 1]{EM73}, \cite[Section 1]{EM75} 
and Voskresenskii \cite[Section 4.8]{Vos98}). 

Ono \cite{Ono63} established the relationship 
between the Hasse norm principle for $K/k$ 
and Hasse principle for all torsors $E$ under
the norm one torus $R^{(1)}_{K/k}(\bG_m)$: 
\begin{theorem}[{Ono \cite[page 70]{Ono63}, see also Platonov \cite[page 44]{Pla82}, Kunyavskii \cite[Remark 3]{Kun84}, Platonov and Rapinchuk \cite[page 307]{PR94}}]\label{thOno}
Let $k$ be a global field and $K/k$ be a finite separable extension. 
Let $T=R^{(1)}_{K/k}(\bG_m)$ be the norm one torus of $K/k$. 
Then 
\begin{align*}
\Sha(T)\simeq (N_{K/k}(\bA_K^\times)\cap k^\times)/N_{K/k}(K^\times).
\end{align*}
In particular, $\Sha(T)=0$ if and only if 
the Hasse norm principle holds for $K/k$. 
\end{theorem}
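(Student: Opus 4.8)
The plan is to derive the isomorphism from the defining short exact sequence of the norm one torus together with Hilbert's Theorem~90 for Weil restrictions, and then to translate the definition of $\Sha(T)$ into adelic language. From the exact sequence of $k$-tori
\[
1\to T\to R_{K/k}(\bG_m)\xrightarrow{\,N\,}\bG_m\to 1,
\]
I would take the long exact sequence in Galois cohomology over $k$. Since $R_{K/k}(\bG_m)(k)=K^\times$, $\bG_m(k)=k^\times$, and the induced map is precisely $N_{K/k}$, while $H^1(k,R_{K/k}(\bG_m))\simeq H^1(K,\bG_m)=0$ by Shapiro's lemma and Hilbert~90, the connecting homomorphism yields a canonical isomorphism $H^1(k,T)\simeq k^\times/N_{K/k}(K^\times)$.

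Next I would run the same argument locally. For a place $v$ of $k$ one has $K\otimes_k k_v\simeq\prod_{w\mid v}K_w$, hence $R_{K/k}(\bG_m)\times_k k_v\simeq\prod_{w\mid v}R_{K_w/k_v}(\bG_m)$, and the base change of the sequence above reads
\[
1\to T\times_k k_v\to\prod_{w\mid v}R_{K_w/k_v}(\bG_m)\xrightarrow{\ \prod_w N_{K_w/k_v}\ }\bG_m\to 1.
\]
Again $H^1(k_v,\prod_{w\mid v}R_{K_w/k_v}(\bG_m))\simeq\bigoplus_{w\mid v}H^1(K_w,\bG_m)=0$, so $H^1(k_v,T)\simeq k_v^\times/\prod_{w\mid v}N_{K_w/k_v}(K_w^\times)$. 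By naturality of the connecting maps with respect to restriction, the restriction $H^1(k,T)\to H^1(k_v,T)$ is identified with the natural projection $k^\times/N_{K/k}(K^\times)\to k_v^\times/\prod_{w\mid v}N_{K_w/k_v}(K_w^\times)$.

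Therefore $\Sha(T)$ is identified with the subgroup of $k^\times/N_{K/k}(K^\times)$ consisting of classes of those $a\in k^\times$ that are local norms at every place, i.e. $a\in\prod_{w\mid v}N_{K_w/k_v}(K_w^\times)$ for all $v$. It remains to observe that this condition is exactly $a\in N_{K/k}(\bA_K^\times)$: for an idele $\alpha=(\alpha_w)\in\bA_K^\times$ one has $N_{K/k}(\alpha)_v=\prod_{w\mid v}N_{K_w/k_v}(\alpha_w)$, and since at almost all $v$ each $\alpha_w$ is a unit in an unramified local extension and hence automatically a local norm, the element $a$ is everywhere a local norm precisely when it lies in the image of $N_{K/k}$ on ideles. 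This gives $\Sha(T)\simeq(N_{K/k}(\bA_K^\times)\cap k^\times)/N_{K/k}(K^\times)$, and in particular $\Sha(T)=0$ if and only if the Hasse norm principle holds for $K/k$. The only genuinely delicate points are the correct handling of the decomposition $K\otimes_k k_v\simeq\prod_{w\mid v}K_w$ inside the Weil restriction and the compatibility of the connecting homomorphisms under restriction; the rest is formal.
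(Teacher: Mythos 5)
Your argument is correct and is the standard one. Note that the paper itself gives no proof of this statement---it is quoted from Ono, with further references to Platonov, Kunyavskii, and Platonov--Rapinchuk---so there is nothing internal to compare against; your derivation (the long exact sequence for $1\to T\to R_{K/k}(\bG_m)\to\bG_m\to 1$, Shapiro plus Hilbert 90 to kill $H^1$ of the Weil restriction globally and locally, and the observation that ``local norm at every place'' equals ``norm of an idele'' because norms of units are surjective at the unramified places) is exactly how the cited sources establish it. The two points you flag as delicate---the decomposition $K\otimes_k k_v\simeq\prod_{w\mid v}K_w$ inside the Weil restriction and the compatibility of connecting maps with restriction---are indeed the only places requiring care, and you handle both correctly.
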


For Galois extensions $K/k$, Tate \cite{Tat67} gave the following theorem:
%

\begin{theorem}[{Tate \cite[page 198]{Tat67}}]\label{thTate}
Let $k$ be a global field, $K/k$ be a finite Galois extension 
with Galois group $G={\rm Gal}(K/k)$. 
Let $V_k$ be the set of all places of $k$ 
and $G_v$ be the decomposition group of $G$ at $v\in V_k$. 
Then 
\begin{align*}
(N_{K/k}(\bA_K^\times)\cap k^\times)/N_{K/k}(K^\times)\simeq 
{\rm Coker}\left\{\bigoplus_{v\in V_k}\widehat H^{-3}(G_v,\bZ)\xrightarrow{\rm cores}\widehat H^{-3}(G,\bZ)\right\}
\end{align*}
where $\widehat H$ is the Tate cohomology. 
In particular, the Hasse norm principle holds for $K/k$ 
if and only if the restriction map 
$H^3(G,\bZ)\xrightarrow{\rm res}\bigoplus_{v\in V_k}H^3(G_v,\bZ)$ 
is injective. 
In particular, if 
$H^3(G,\bZ)=0$, then 
the Hasse norm principle holds for $K/k$. 
\end{theorem}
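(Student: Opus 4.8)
The plan is to deduce this from the fundamental theorems of local and global class field theory, which I take as known input; the argument is Tate's, from \cite{Tat67}. Start with the tautological exact sequence of $G$-modules $0\to K^\times\to\bA_K^\times\to C_K\to 0$, where $C_K=\bA_K^\times/K^\times$ is the idele class group of $K$. Passing to the long exact sequence in Tate cohomology $\widehat H^*(G,-)$ and using $\widehat H^0(G,K^\times)=k^\times/N_{K/k}(K^\times)$ together with $\widehat H^0(G,\bA_K^\times)=\bA_k^\times/N_{K/k}(\bA_K^\times)$, one sees that the kernel of the map $\widehat H^0(G,K^\times)\to\widehat H^0(G,\bA_K^\times)$ induced by the inclusion $k^\times\hookrightarrow\bA_k^\times$ is precisely $(N_{K/k}(\bA_K^\times)\cap k^\times)/N_{K/k}(K^\times)$. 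By exactness this kernel equals $\mathrm{Coker}\big(\widehat H^{-1}(G,\bA_K^\times)\to\widehat H^{-1}(G,C_K)\big)$, so it remains to identify that last map.

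I would next decompose the source. Writing $w=w(v)$ for a fixed place of $K$ above $v$ and using that $\widehat H^*(G_v,\mathcal{O}_w^\times)=0$ at the unramified places, Shapiro's lemma applied to $\bA_K^\times\simeq\bigoplus_{v\in V_k}\mathrm{Ind}_{G_v}^G K_w^\times$ (in the appropriate limit sense) gives $\widehat H^{-1}(G,\bA_K^\times)\simeq\bigoplus_{v\in V_k}\widehat H^{-1}(G_v,K_w^\times)$, with the map to $\widehat H^{-1}(G,C_K)$ becoming, on the $v$-component, ``include $K_w^\times\hookrightarrow C_K$, then corestrict from $G_v$ to $G$''. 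Now the Tate--Nakayama theorem enters twice: cup product with the local fundamental class $u_{K_w/k_v}\in H^2(G_v,K_w^\times)$ gives isomorphisms $\widehat H^{r}(G_v,\bZ)\xrightarrow{\ \sim\ }\widehat H^{r+2}(G_v,K_w^\times)$, and cup product with the global fundamental class $u_{K/k}\in H^2(G,C_K)$ gives isomorphisms $\widehat H^{r}(G,\bZ)\xrightarrow{\ \sim\ }\widehat H^{r+2}(G,C_K)$; taking $r=-3$ yields $\widehat H^{-1}(G_v,K_w^\times)\simeq\widehat H^{-3}(G_v,\bZ)$ and $\widehat H^{-1}(G,C_K)\simeq\widehat H^{-3}(G,\bZ)$.

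The step I expect to be the crux is verifying that, under these identifications, the map $\bigoplus_v\widehat H^{-1}(G_v,K_w^\times)\to\widehat H^{-1}(G,C_K)$ becomes the sum of the corestriction maps $\widehat H^{-3}(G_v,\bZ)\xrightarrow{\mathrm{cor}}\widehat H^{-3}(G,\bZ)$. This is where the compatibility of the local and global reciprocity laws is used: the image of $u_{K_w/k_v}$ under $K_w^\times\hookrightarrow\bA_K^\times\twoheadrightarrow C_K$ equals $\mathrm{res}^G_{G_v}(u_{K/k})$ in $H^2(G_v,C_K)$, and then the projection formula $\mathrm{cor}(\xi\cup\mathrm{res}(\eta))=\mathrm{cor}(\xi)\cup\eta$ converts ``cup with $u_{K_w/k_v}$, then corestrict'' into ``corestrict, then cup with $u_{K/k}$''. (Underneath this lies the class field theory itself, which is the genuinely deep ingredient.) Granting it, we obtain
\begin{align*}
(N_{K/k}(\bA_K^\times)\cap k^\times)/N_{K/k}(K^\times)\simeq\mathrm{Coker}\Big(\bigoplus_{v\in V_k}\widehat H^{-3}(G_v,\bZ)\xrightarrow{\mathrm{cor}}\widehat H^{-3}(G,\bZ)\Big).
\end{align*}
Finally, to get the ``in particular'' assertions, I would Pontryagin-dualize: since $\widehat H^{-3}(G,\bZ)=H_2(G,\bZ)$ is finite, the universal coefficient theorem identifies its dual with $H^3(G,\bZ)$, and similarly $\widehat H^{-3}(G_v,\bZ)^\vee\simeq H^3(G_v,\bZ)$, corestriction dualizing to restriction. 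Hence the cokernel above vanishes if and only if $\mathrm{res}\colon H^3(G,\bZ)\to\bigoplus_{v\in V_k}H^3(G_v,\bZ)$ is injective (the target really is a direct sum, as $H^3(G_v,\bZ)=0$ for the unramified $v$, whose $G_v$ is cyclic); and when $H^3(G,\bZ)=0$ this is automatic, so the Hasse norm principle holds for $K/k$.
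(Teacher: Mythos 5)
The paper does not prove this theorem; it is quoted directly from Tate's article in the Brighton proceedings, so there is no internal proof to compare against. Your reconstruction is the standard argument from that source and is correct in all its steps: the long exact Tate-cohomology sequence for $0\to K^\times\to\bA_K^\times\to C_K\to 0$ identifying the knot group with the cokernel of $\widehat H^{-1}(G,\bA_K^\times)\to\widehat H^{-1}(G,C_K)$, the semi-local Shapiro decomposition, the two applications of Tate--Nakayama, the compatibility $\mathrm{res}^G_{G_v}(u_{K/k})=$ image of $u_{K_w/k_v}$ combined with the projection formula to recognize the map as corestriction, and the finite-group duality turning $\mathrm{Coker}(\mathrm{cores})$ on $\widehat H^{-3}(\,\cdot\,,\bZ)$ into $\mathrm{Ker}(\mathrm{res})$ on $H^3(\,\cdot\,,\bZ)$ (with the direct sum justified by $H^3(G_v,\bZ)=0$ at unramified places).
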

Let $C_n$ be the cyclic group of order $n$. 
If $G\simeq C_n$, then 
$\widehat H^{-3}(G,\bZ)\simeq H^3(G,\bZ)\simeq H^1(G,\bZ)=0$ 
and hence Hasse's original theorem follows. 
If there exists a place $v$ of $k$ such that $G_v=G$, then 
the Hasse norm principle also holds for $K/k$. 
For example, the Hasse norm principle holds for $K/k$ with 
$G\simeq V_4=(C_2)^2$ if and only if 
there exists a $($ramified$)$ place $v$ of $k$ such that $G_v=V_4$ because 
$H^3(V_4,\bZ)\simeq\bZ/2\bZ$ and $H^3(C_2,\bZ)=0$. 
The Hasse norm principle holds for $K/k$ with 
$G\simeq (C_2)^3$ if and only if {\rm (i)} 
there exists a $($ramified$)$ place $v$ of $k$ such that $G_v=G$ 
or {\rm (ii)} 
there exist $($ramified$)$ places $v_1,v_2,v_3$ of $k$ such that 
$G_{v_i}\simeq V_4$ and 
$H^3(G,\bZ)\xrightarrow{\rm res}
H^3(G_{v_1},\bZ)\oplus H^3(G_{v_2},\bZ)\oplus H^3(G_{v_3},\bZ)$ 
is an isomorphism because 
$H^3(G,\bZ)\simeq(\bZ/2\bZ)^{\oplus 3}$ and 
$H^3(V_4,\bZ)\simeq \bZ/2\bZ$. 
Note that if $v$ is an unramified place of $k$, then $G_v$ is cyclic. 

Let $T$ be an algebraic $k$-torus 
and $T(k)$ be the group of $k$-rational points of $T$. 
Then $T(k)$ 
embeds into $\prod_{v\in V_k} T(k_v)$ by the diagonal map 
where 
$V_k$ is the set of all places of $k$ and 
$k_v$ is the completion of $k$ at $v$. 
Let $\overline{T(k)}$ be the closure of $T(k)$  
in the product $\prod_{v\in V_k} T(k_v)$. 
The group 
\begin{align*}
A(T)=\left(\prod_{v\in V_k} T(k_v)\right)/\overline{T(k)}
\end{align*}
is called {\it the kernel of the weak approximation} of $T$. 
We say that {\it $T$ has the weak approximation property} if $A(T)=0$. 

%
\begin{theorem}[{Voskresenskii \cite[Theorem 5, page 1213]{Vos69}, 
\cite[Theorem 6, page 9]{Vos70}, see also \cite[Section 11.6, Theorem, page 120]{Vos98}}]\label{thV}
Let $k$ be a global field, 
$T$ be an algebraic $k$-torus and $X$ be a smooth $k$-compactification of $T$. 
Then there exists an exact sequence
\begin{align*}
0\to A(T)\to H^1(k,{\rm Pic}\,\overline{X})^{\vee}\to \Sha(T)\to 0
\end{align*}
where $M^{\vee}={\rm Hom}(M,\bQ/\bZ)$ is the Pontryagin dual of $M$. 
In particular, if $T$ is retract $k$-rational, then $ H^1(k,{\rm Pic}\,\overline{X})=0$ and hence 
$A(T)=0$ and $\Sha(T)=0$. 
Moreover, if $L$ is the splitting field of $T$ and $L/k$ 
is an unramified extension, then $A(T)=0$ and 
$H^1(k,{\rm Pic}\,\overline{X})^{\vee}\simeq \Sha(T)$. 
\end{theorem}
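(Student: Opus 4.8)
The plan is to trade $T$ for a flasque resolution and then read the sequence off from Poitou--Tate duality. First I would record the geometry. Since $X$ is a smooth, proper, geometrically connected compactification of $T$, one has $\overline{k}[\overline{X}]^\times=\overline{k}^\times$ and $\mathrm{Pic}(\overline{T})=0$, while the geometric boundary $\overline{X}\setminus\overline{T}$ is a finite union $\bigcup_iD_i$ of divisors permuted by $G=\mathrm{Gal}(\overline{k}/k)$; the divisor class sequence on $\overline{X}$ becomes an exact sequence of $G$-lattices
\begin{align*}
0\longrightarrow\widehat{T}\longrightarrow P\longrightarrow\mathrm{Pic}(\overline{X})\longrightarrow 0,\qquad P:=\textstyle\bigoplus_i\bZ D_i,
\end{align*}
with $P$ a permutation module. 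Applying the anti-equivalence between $G$-lattices and $k$-tori turns this into a short exact sequence of tori $1\to S\to Q\to T\to 1$ with $Q$ quasi-trivial (character module $P$) and $S$ the torus with character module $\mathrm{Pic}(\overline{X})$, which by Colliot-Th\'el\`ene--Sansuc is a flasque lattice; in particular $H^1(k_v,S)=0$ whenever $v$ is unramified in $L$, hence for all but finitely many $v$.

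Next I would exploit that $Q$ is quasi-trivial: $H^1(k,Q)=H^1(k_v,Q)=0$ for every $v$ (Hilbert~90 and Shapiro), $Q$ has weak approximation, and $\Sha^2(k,Q)=0$ --- the last because $Q$ is a product of Weil restrictions $\mathrm{Res}_{L_j/k}\bG_m$ and $\mathrm{Br}(L_j)$ injects into $\bigoplus_w\mathrm{Br}(L_{j,w})$ by global class field theory. Feeding $1\to S\to Q\to T\to 1$ into the long exact cohomology sequences over $k$ and over each $k_v$ gives $H^1(k,T)\simeq\ker\bigl(H^2(k,S)\to H^2(k,Q)\bigr)$, compatibly with localization; a short diagram chase using $\Sha^2(k,Q)=0$ then produces an isomorphism $\Sha(T)=\Sha^1(k,T)\simeq\Sha^2(k,S)$.

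It remains to describe $A(T)$ cohomologically. From $H^1(k_v,Q)=0$ one gets $T(k_v)/\mathrm{im}\,Q(k_v)\simeq H^1(k_v,S)$, where $\mathrm{im}\,Q(k_v)$ is an open subgroup equal to $T(k_v)$ for all but finitely many $v$; the analogous statement holds over $k$, where $T(k)\to H^1(k,S)$ is onto. Comparing $\overline{T(k)}$ with $\overline{Q(k)}=\prod_vQ(k_v)$ (weak approximation for $Q$) and using that $\prod_vT(k_v)\to\bigoplus_vH^1(k_v,S)$ is continuous with discrete target, one identifies
\begin{align*}
A(T)=\Bigl(\textstyle\prod_vT(k_v)\Bigr)\big/\overline{T(k)}\;\simeq\;\mathrm{coker}\Bigl(H^1(k,S)\longrightarrow\textstyle\bigoplus_vH^1(k_v,S)\Bigr).
\end{align*}
Now I would invoke the Poitou--Tate exact sequence for the torus $S$: exactness of the segment $H^1(k,S)\to\bigoplus_vH^1(k_v,S)\to H^1(k,\widehat{S})^{\vee}\to H^2(k,S)\to\bigoplus_vH^2(k_v,S)$ shows that $H^1(k,\mathrm{Pic}\,\overline{X})^{\vee}=H^1(k,\widehat{S})^{\vee}$ fits into a short exact sequence $0\to A(T)\to H^1(k,\mathrm{Pic}\,\overline{X})^{\vee}\to\Sha^2(k,S)\to 0$, and substituting $\Sha^2(k,S)\simeq\Sha(T)$ yields the asserted sequence. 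I expect the real obstacle to be this identification of $A(T)$: reconciling the $v$-adic topology underlying weak approximation with the discrete Poitou--Tate picture --- openness and eventual triviality of the local images $\mathrm{im}\,Q(k_v)$, and the fact that passing to the closure does not enlarge the image in $\bigoplus_vH^1(k_v,S)$ --- must be handled carefully, in particular at the archimedean places, where $H^1(k_v,S)$ is to be read as Tate cohomology.

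Finally, for the two supplementary assertions: if $T$ is retract $k$-rational then $\mathrm{Pic}(\overline{X})$ is an invertible lattice (a direct summand of a permutation module, by Colliot-Th\'el\`ene--Sansuc and Saltman), so $H^1(k,\mathrm{Pic}\,\overline{X})$ is a direct summand of $\bigoplus_jH^1(G_j,\bZ)=0$, and the exact sequence forces $A(T)=\Sha(T)=0$. If the splitting field $L/k$ is unramified at every place, then every decomposition group $G_v$ is cyclic and $H^1(k_v,S)=\widehat{H}^1(G_v,\mathrm{Pic}(\overline{X})^{\vee})=0$ for all $v$ --- using coflasqueness of $\mathrm{Pic}(\overline{X})^{\vee}$, cyclicity of $G_v$, and cohomological triviality of the units of the unramified local extension, or alternatively Voskresenskii's theorem that a torus has weak approximation with respect to any set of places unramified in its splitting field --- so $A(T)=0$ and hence $H^1(k,\mathrm{Pic}\,\overline{X})^{\vee}\simeq\Sha(T)$.
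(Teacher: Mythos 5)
Your argument is essentially correct, but note that the paper does not prove Theorem \ref{thV} at all: it is quoted from Voskresenskii (with the last assertion referred to \cite[Section 11.5]{Vos98}), so there is no internal proof to compare against. What you have written is the now-standard derivation due to Sansuc \cite[\S 8]{San81} and Colliot-Th\'el\`ene--Sansuc \cite{CTS77}: take the flasque resolution $1\to S\to Q\to T\to 1$ coming from $0\to\widehat{T}\to\mathrm{Div}_{\overline{X}\setminus\overline{T}}(\overline{X})\to\mathrm{Pic}\,\overline{X}\to 0$, identify $\Sha(T)\simeq\Sha^2(k,S)$ and $A(T)\simeq\mathrm{coker}\bigl(H^1(k,S)\to\bigoplus_v H^1(k_v,S)\bigr)$, and splice these into the Poitou--Tate sequence for $S$. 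Voskresenskii's original proof in \cite{Vos69}, \cite{Vos70} reaches the same sequence more directly from the Picard-group resolution and Nakayama--Tate duality, without the flasque-resolution formalism (which was only codified later); your route buys the clean separation of $A(T)$ and $\Sha(T)$ as the two halves of one Poitou--Tate segment, at the cost of the topological bookkeeping you correctly flag (openness of $\mathrm{im}\,Q(k_v)$, vanishing of $H^1(k_v,S)$ at unramified places via coflasqueness of the cocharacter lattice, Tate cohomology at real places), all of which is carried out in \cite[Proposition 19]{CTS77} and \cite{San81}. Two small points to tidy: you use the symbol $\vee$ both for the Pontryagin dual (as the paper does) and for the dual lattice $\mathrm{Hom}_{\bZ}(\mathrm{Pic}\,\overline{X},\bZ)$ in the final paragraph, which should be disambiguated; and the surjectivity of $T(k)\to H^1(k,S)$ and $T(k_v)\to H^1(k_v,S)$ should be stated as following from $H^1(k,Q)=H^1(k_v,Q)=0$, which you do implicitly but should make explicit since it is what makes the cokernel description of $A(T)$ compare correctly with the Poitou--Tate image.
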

For the last assertion, see Voskresenskii \cite[Section 11.5]{Vos98}. 
It follows that 
$H^1(k,{\rm Pic}\,\overline{X})=0$ if and only if $A(T)=0$ and $\Sha(T)=0$, 
i.e. $T$ has the weak approximation property and 
Hasse principle holds for all torsors $E$ under $T$. 
If $T$ is (stably/retract) $k$-rational, 
then $H^1(k,{\rm Pic}\,\overline{X})=0$ 
(see Voskresenskii \cite[Theorem 5, page 1213]{Vos69},
Manin \cite[Section 30]{Man74}, 
Manin and Tsfasman \cite{MT86} 
and also Hoshi, Kanai and Yamasaki \cite[Section 1]{HKY22}). 
Theorem \ref{thV} was generalized 
to the case of linear algebraic groups by Sansuc \cite{San81}.



Applying Theorem \ref{thV} to $T=R^{(1)}_{K/k}(\bG_m)$,  
it follows from Theorem \ref{thOno} that 
$H^1(k,{\rm Pic}\,\overline{X})=0$ if and only if 
$A(T)=0$ and $\Sha(T)=0$, 
i.e. 
$T$ has the weak approximation property and 
the Hasse norm principle holds for $K/k$. 
In the algebraic language, 
the latter condition $\Sha(T)=0$ means that 
for the corresponding norm hypersurface $f(x_1,\ldots,x_n)=b$, 
it has a $k$-rational point 
if and only if it has a $k_v$-rational point 
for any place $v$ of $k$ where 
$f\in k[x_1,\ldots,x_n]$ is the polynomial of total 
degree $n$ defined by the norm map $N_{K/k}:K^\times\to k^\times$ 
and $b\in k^\times$ 
(see Voskresenskii \cite[Example 4, page 122]{Vos98}).


When $K/k$ is a finite Galois extension, 
we have: 

\begin{theorem}[{Voskresenskii \cite[Theorem 7]{Vos70}, Colliot-Th\'{e}l\`{e}ne and Sansuc \cite[Proposition 1]{CTS77}}]\label{thV2}
Let $k$ be a field and 
$K/k$ be a finite Galois extension with Galois group $G={\rm Gal}(K/k)$. 
Let $T=R^{(1)}_{K/k}(\bG_m)$ be the norm one torus of $K/k$ 
and $X$ be a smooth $k$-compactification of $T$. 
Then 
$H^1(H,{\rm Pic}\, X_K)\simeq H^2(H,\widehat{T})\simeq H^3(H,\bZ)$ 
for any subgroup $H$ of $G$. 
In particular, 
$H^1(k,{\rm Pic}\, \overline{X})\simeq
H^1(G,{\rm Pic}\, X_K)\simeq H^2(G,\widehat{T})
\simeq H^2(G,J_G)\simeq H^3(G,\bZ)\simeq M(G)$ 
where $M(G)$ is the Schur multiplier of $G$.
\end{theorem}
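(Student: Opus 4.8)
The plan is to establish the chain of isomorphisms in Theorem~\ref{thV2} by working from the known structure of the norm one torus and its character module. The key input is that for $T=R^{(1)}_{K/k}(\bG_m)$ with $K/k$ Galois of group $G$, the splitting field is $K$ itself and the character module $\widehat{T}$ is exactly the Chevalley module $J_G = J_{G/1}$, sitting in the exact sequence $0\to \bZ \to \bZ[G] \to J_G \to 0$ described in the introduction. First I would recall the standard flasque/coflasque resolution of $\widehat{T}$: since $\bZ[G]$ is a permutation module (hence cohomologically trivial for every subgroup $H$), the short exact sequence $0\to\bZ\to\bZ[G]\to J_G\to 0$ gives, via the long exact sequence in $H$-cohomology and dimension shift, $H^i(H,J_G)\simeq H^{i+1}(H,\bZ)$ for all $i\ge 1$ and all subgroups $H\le G$. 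Applying this with $i=2$ yields $H^2(H,J_G)\simeq H^3(H,\bZ)$, and since $\widehat{T}\simeq J_G$ this is $H^2(H,\widehat{T})\simeq H^3(H,\bZ)$.

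Next I would connect $H^1(H,\mathrm{Pic}\,X_K)$ to $H^2(H,\widehat{T})$. The point is that for a smooth $k$-compactification $X$ of a torus $T$ with splitting field $K$, one has an exact sequence of $G$-modules $0\to \widehat{T}\to \widehat{Q}\to \mathrm{Pic}\,X_K\to 0$ where $\widehat{Q}$ is a permutation module (coming from the divisors supported on the boundary $X_K\setminus T_K$, together with the character lattice); this is the content of Colliot-Th\'el\`ene–Sansuc's description of the Picard group of a toric variety. Since $\widehat{Q}$ is a permutation module it is cohomologically trivial for all subgroups, so the long exact sequence gives $H^1(H,\mathrm{Pic}\,X_K)\simeq H^2(H,\widehat{T})$ for every $H\le G$. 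Combining with the previous paragraph gives $H^1(H,\mathrm{Pic}\,X_K)\simeq H^2(H,\widehat{T})\simeq H^3(H,\bZ)$, which is the first displayed assertion.

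For the ``in particular'' clause, I would take $H=G$ and invoke Theorem~\ref{thV}: since here $L=K$ is the splitting field, the general statement $H^1(k,\mathrm{Pic}\,\overline{X})\simeq H^1(G,\mathrm{Pic}\,X_K)$ holds (this is the descent/inflation identification when the Galois action factors through $\mathrm{Gal}(K/k)$, i.e. $H^1$ of the absolute Galois group with values in a module on which it acts through $G$ equals $H^1(G,-)$, because $\mathrm{Pic}\,X_{\overline{k}}=\mathrm{Pic}\,X_K$ as higher cohomology of the further extension vanishes for the torsion-free finitely generated module). Then $H^1(G,\mathrm{Pic}\,X_K)\simeq H^2(G,\widehat{T})\simeq H^2(G,J_G)\simeq H^3(G,\bZ)$ by the above, and finally $H^3(G,\bZ)\simeq H_2(G,\bZ)=M(G)$, the Schur multiplier, by the universal coefficient theorem together with the fact that $H^3(G,\bZ)\simeq H^2(G,\bQ/\bZ)\simeq H_2(G,\bZ)$ for a finite group $G$ (using $H^i(G,\bQ)=0$ for $i\ge 1$ to shift $H^3(G,\bZ)\simeq H^2(G,\bQ/\bZ)$, then Pontryagin duality $H^2(G,\bQ/\bZ)\simeq H_2(G,\bZ)^\vee$ and the fact that $M(G)$ is finite and self-dual as an abelian group).

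The main obstacle, and the step deserving the most care, is the exact sequence $0\to\widehat{T}\to\widehat{Q}\to\mathrm{Pic}\,X_K\to 0$ with $\widehat{Q}$ a permutation $G$-module: one must either cite the precise statement from Colliot-Th\'el\`ene–Sansuc (or Voskresenskii) that for a smooth compactification the Picard group of a split toric variety is a permutation-like quotient, and check $G$-equivariance of the identification, or else observe that the whole theorem is precisely the cited result of \cite{Vos70, CTS77} and the task reduces to recording it. In the write-up I would state the resolution, note the cohomological triviality of permutation modules over all subgroups (so the isomorphisms hold for arbitrary $H$, not just $G$), and then let the dimension-shifting arguments run; everything else is the formal homological algebra sketched above.
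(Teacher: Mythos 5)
The paper gives no proof of Theorem \ref{thV2}; it is quoted from \cite{Vos70} and \cite{CTS77}. Your skeleton --- dimension shifting along $0\to\bZ\to\bZ[G]\to J_G\to 0$, the Colliot-Th\'el\`ene--Sansuc sequence $0\to\widehat{T}\to Q\to{\rm Pic}\,X_K\to 0$ with $Q$ the permutation module of boundary divisors, and $H^3(G,\bZ)\simeq H^2(G,\bQ/\bZ)\simeq M(G)$ --- is the right one, and the first and last links of the chain are correct.

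The gap is in the middle link. A permutation module is \emph{not} cohomologically trivial: $H^2(H,\bZ[H/H'])\simeq H^2(H',\bZ)\simeq{\rm Hom}(H',\bQ/\bZ)$, which is nonzero for every nontrivial $H'$ (the paper itself uses $H^2(G,\bZ[G/H])\simeq H^{ab}$ in the proof of Proposition \ref{prop2.1}). Permutation lattices only satisfy $H^1(H,P)=0$ and $\widehat{H}^{-1}(H,P)=0$. Consequently the long exact sequence yields only
\begin{align*}
H^1(H,{\rm Pic}\,X_K)\simeq{\rm Ker}\left\{H^2(H,\widehat{T})\to H^2(H,Q)\right\},
\end{align*}
not the full $H^2(H,\widehat{T})$. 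Indeed, if your claim were correct it would prove $H^1(G,{\rm Pic}\,X_K)\simeq H^2(G,\widehat{T})$ for an \emph{arbitrary} torus, contradicting Theorem \ref{thCTS87}, which identifies $H^1(G,{\rm Pic}\,X_K)$ with the subgroup $\Sha^2_\omega(G,\widehat{T})$ only. The missing ingredient is special to the norm one torus of a Galois extension: by the same dimension shift, $H^2(C,J_G)\simeq H^3(C,\bZ)\simeq H^1(C,\bZ)=0$ for every cyclic $C\leq G$ (periodicity of the cohomology of cyclic groups), so every class of $H^2(H,J_G)$ restricts to zero on all cyclic subgroups and hence $\Sha^2_\omega(H,J_G)=H^2(H,J_G)$. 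Combining this with the identification of the displayed kernel with $\Sha^2_\omega(H,\widehat{T})$ (a class in the kernel restricts into $H^1(C,{\rm Pic}\,X_K)=\widehat{H}^{-1}(C,{\rm Pic}\,X_K)=0$ for cyclic $C$ since ${\rm Pic}\,X_K$ is flasque, and conversely a character of a stabilizer $H'$ vanishing on all cyclic subgroups is trivial) closes the argument. You need to insert this cyclic-vanishing step; as written, the asserted isomorphism $H^1(H,{\rm Pic}\,X_K)\simeq H^2(H,\widehat{T})$ rests on a false premise.
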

In other words, for the $G$-lattice $J_G\simeq \widehat{T}$, 
$H^1(H,[J_G]^{fl})\simeq H^3(H,\bZ)$ for any subgroup $H$ of $G$ 
and $H^1(G,[J_G]^{fl})\simeq H^3(G,\bZ)\simeq H^2(G,\bQ/\bZ)$; 
the Schur multiplier of $G$. 
By the exact sequence $0\to\bZ\to\bZ[G]\to J_G\to 0$, 
we also have $\delta:H^1(G,J_G)\simeq H^2(G,\bZ)\simeq G^{ab}\simeq 
G/[G,G]$ where $\delta$ is the connecting homomorphism and 
$G^{ab}$ is the abelianization of $G$. 


By Poitou-Tate duality (see Milne \cite[Theorem 4.20]{Mil86}, 
Platonov and Rapinchuk \cite[Theorem 6.10]{PR94}, 
Neukirch, Schmidt and Wingberg \cite[Theorem 8.6.8]{NSW00}, 
Harari \cite[Theorem 17.13]{Har20}), 
we also have 
\begin{align*}
\Sha(T)^\vee\simeq\Sha^2(G,\widehat{T})
\end{align*}
where $\Sha(T)^\vee={\rm Hom}(\Sha(T),\bQ/\bZ)$ and 
\begin{align*}
\Sha^i(G,\widehat{T})={\rm Ker}\left\{H^i(G,\widehat{T})\xrightarrow{\rm res} \bigoplus_{v\in V_k} 
H^i(G_v,\widehat{T})\right\}\quad (i\geq 1)
\end{align*}
is {\it the $i$-th Shafarevich-Tate group} 
of $\widehat{T}={\rm Hom}(T,\bG_m)$, 
$G={\rm Gal}(K/k)$ and $K$ is the minimal splitting field of $T$. 
Note that $\Sha(T)^\vee=\Sha^1(G,T)^\vee\simeq \Sha^2(G,\widehat{T})$. 
In the special case where 
$T=R^{(1)}_{K/k}(\bG_m)$ and $K/k$ is Galois with $G={\rm Gal}(K/k)$, 
we have 
$H^2(G,\widehat{T})=H^2(G,J_{G})\simeq H^3(G,\bZ)$ and hence 
we get Tate's theorem (Theorem \ref{thTate}) 
via Ono's theorem (Theorem \ref{thOno}). 


Let 
$M$ be a $G$-lattice, i.e. 
finitely generated $\bZ[G]$-module which is $\bZ$-free as an abelian groups. 
We define 
\begin{align*}
\Sha^i_\omega(G,M):={\rm Ker}\left\{H^i(G,M)\xrightarrow{{\rm res}}\bigoplus_{H\leq G:{\rm\, cyclic}}
H^i(H,M)\right\}\quad (i\geq 1) .
\end{align*}
Note that ``$\Sha^i_\omega$'' corresponds to 
the unramified part of ``$\Sha^i$'' because 
if $v\in V_k$ is unramified, then $G_v\simeq C_n$ and 
all the cyclic subgroups of $G$ appear as $G_v$ 
from the Chebotarev density theorem. 
%
\begin{theorem}[{Colliot-Th\'{e}l\`{e}ne and Sansuc \cite[Proposition 9.5 (ii)]{CTS87}, see also \cite[Proposition 9.8]{San81} and \cite[page 98]{Vos98}}]\label{thCTS87}
Let $k$ be a field 
with ${\rm char}\, k=0$
and $K/k$ be a finite Galois extension 
with Galois group $G={\rm Gal}(K/k)$. 
Let $T$ be an algebraic $k$-torus which splits over $K$ and 
$X$ be a smooth $k$-compactification of $T$. 
Then we have 
\begin{align*}
\Sha^2_\omega(G,\widehat{T})\simeq 
H^1(G,{\rm Pic}\, X_K)\simeq {\rm Br}(X)/{\rm Br}(k)
\end{align*}
where 
${\rm Br}(X)$ is the \'etale cohomological Brauer Group of $X$ 
$($it is the same as the Azumaya-Brauer group of $X$ 
for such $X$, see \cite[page 199]{CTS87}$)$. 
\end{theorem}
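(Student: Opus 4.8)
The plan is to establish the two isomorphisms $\Sha^2_\omega(G,\widehat{T})\simeq H^1(G,{\rm Pic}\,X_K)$ and $H^1(G,{\rm Pic}\,X_K)\simeq{\rm Br}(X)/{\rm Br}(k)$ separately, with the flasque resolution carried by the smooth compactification $X$ as the common tool. First I would record the exact sequence of $G$-lattices
\begin{align*}
0\to\widehat{T}\to P\to{\rm Pic}\,X_K\to 0,
\end{align*}
coming from the divisor class sequence of the open immersion $T_K\hookrightarrow X_K$: here $T_K\simeq(\bG_{m,K})^n$ has trivial Picard group and unit group $K[T_K]^\times/K^\times\simeq\widehat{T}$, while $X_K$ is proper and geometrically integral so that $K[X_K]^\times=K^\times$; thus $P={\rm Div}_{X_K\setminus T_K}(X_K)\simeq\bigoplus_i\bZ[G/G_i]$ is a permutation $G$-module, since $G$ permutes the irreducible components of the boundary divisor. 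The one nontrivial ingredient, which I would quote from Colliot-Th\'{e}l\`{e}ne and Sansuc rather than reprove, is that ${\rm Pic}\,X_K$ is a \emph{flasque} $G$-lattice, i.e. $\widehat{H}^{-1}(H,{\rm Pic}\,X_K)=0$ for all $H\leq G$, with class independent of the chosen $X$.

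For the first isomorphism I would take cohomology of the displayed sequence. Over a cyclic subgroup $H\leq G$, flasqueness and the $2$-periodicity of Tate cohomology give $H^1(H,{\rm Pic}\,X_K)\simeq\widehat{H}^{-1}(H,{\rm Pic}\,X_K)=0$, hence $H^2(H,\widehat{T})\hookrightarrow H^2(H,P)$; over $G$ itself $H^1(G,P)=\bigoplus_iH^1(G_i,\bZ)=0$, so the connecting map identifies $H^1(G,{\rm Pic}\,X_K)$ with $\ker\big(H^2(G,\widehat{T})\xrightarrow{\iota_*}H^2(G,P)\big)$. It then suffices to check that $H^2(G,P)=\bigoplus_i{\rm Hom}(G_i,\bQ/\bZ)$ injects into $\bigoplus_{H\leq G\,{\rm cyclic}}H^2(H,P)$, which holds because a homomorphism from a finite group to $\bQ/\bZ$ is detected on its cyclic subgroups and the Mackey double-coset formula identifies the pertinent components of ${\rm res}^G_H$ on $\bZ[G/G_i]$ with ordinary restrictions ${\rm res}^{G_i}_{H\cap G_i}$. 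A diagram chase through the commutative square with rows $\iota_*\colon H^2(G,\widehat{T})\to H^2(G,P)$ and $\bigoplus_HH^2(H,\widehat{T})\hookrightarrow\bigoplus_HH^2(H,P)$ and vertical restriction maps then yields $\Sha^2_\omega(G,\widehat{T})=\ker(\iota_*)\simeq H^1(G,{\rm Pic}\,X_K)$.

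For the second isomorphism I would use the low-degree exact sequence of the Hochschild--Serre spectral sequence for the absolute Galois group $\Gamma_k={\rm Gal}(\overline{k}/k)$ acting on $\overline{X}=X\times_k\overline{k}$. Since $X$ is proper and geometrically integral, $\overline{k}[\overline{X}]^\times=\overline{k}^\times$, and since Hilbert 90 gives $H^1(\Gamma_k,\overline{k}^\times)=0$, one obtains
\begin{align*}
0\to{\rm Pic}(X)\to({\rm Pic}\,\overline{X})^{\Gamma_k}\to{\rm Br}(k)\to{\rm Br}_1(X)\to H^1(\Gamma_k,{\rm Pic}\,\overline{X})\to H^3(\Gamma_k,\overline{k}^\times),
\end{align*}
where ${\rm Br}_1(X)=\ker({\rm Br}(X)\to{\rm Br}(\overline{X}))$. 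The identity element of $T$ is a $k$-rational point of $X$, so ${\rm Br}(k)\to{\rm Br}_1(X)$ is split injective and the map to $H^3(\Gamma_k,\overline{k}^\times)$ is zero, giving ${\rm Br}_1(X)/{\rm Br}(k)\simeq H^1(\Gamma_k,{\rm Pic}\,\overline{X})$. Since $\overline{X}$ is $\overline{k}$-rational one has ${\rm Br}(\overline{X})=0$, hence ${\rm Br}_1(X)={\rm Br}(X)$; and since $T$ splits over $K$, one has ${\rm Pic}\,\overline{X}={\rm Pic}\,X_K$ as $G$-modules (with $\Gamma_K$ acting trivially), so inflation gives $H^1(\Gamma_k,{\rm Pic}\,\overline{X})\simeq H^1(G,{\rm Pic}\,X_K)$. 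Putting these together yields ${\rm Br}(X)/{\rm Br}(k)\simeq H^1(G,{\rm Pic}\,X_K)$.

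The main obstacle is exactly the one imported ingredient: the flasqueness of ${\rm Pic}\,X_K$ and the independence of its class from the chosen smooth compactification --- and it is here that the hypothesis ${\rm char}\,k=0$ is genuinely needed. Its proof rests on resolution of singularities (to produce $X$ at all and to compare compactifications), on the fact that any two smooth compactifications are dominated by a common one obtained through a chain of blow-ups with smooth centres, and on the computation that such a blow-up alters ${\rm Pic}$ only by a permutation direct summand. Once this is granted, the rest above is routine homological bookkeeping.
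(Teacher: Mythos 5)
Your argument is correct, but note that the paper itself offers no proof of this statement: it is imported verbatim from Colliot-Th\'{e}l\`{e}ne and Sansuc \cite[Proposition 9.5 (ii)]{CTS87} (with further pointers to \cite{San81} and \cite{Vos98}), so there is no internal proof to compare against. What you have written is essentially the standard argument from those references. The first half --- the sequence $0\to\widehat{T}\to P\to{\rm Pic}\,X_K\to 0$ with $P$ a permutation module, the vanishing $H^1(H,{\rm Pic}\,X_K)=0$ for cyclic $H$ by flasqueness and periodicity, $H^1(G,P)=0$, and the Mackey/character-detection argument showing that $H^2(G,P)$ injects into the cyclic restrictions, whence $\Sha^2_\omega(G,\widehat{T})=\ker\{H^2(G,\widehat{T})\to H^2(G,P)\}\simeq H^1(G,{\rm Pic}\,X_K)$ --- is exactly the route taken in the cited sources, and your reduction of everything to the flasqueness of ${\rm Pic}\,X_K$ (correctly flagged as the one imported ingredient, resting on resolution of singularities and the behaviour of ${\rm Pic}$ under blow-ups with smooth centres, which is where ${\rm char}\,k=0$ enters) is the right division of labour. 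The second half via the Hochschild--Serre spectral sequence, the splitting furnished by the identity section of $T$, and ${\rm Br}(\overline{X})=0$ for a smooth proper rational variety in characteristic zero is likewise standard. The only step you assert more quickly than you should is that ${\rm Pic}\,\overline{X}={\rm Pic}\,X_K$ with $\Gamma_K$ acting trivially: this is true but is itself a lemma of Colliot-Th\'{e}l\`{e}ne and Sansuc (the splitting field of the $\Gamma_k$-module ${\rm Pic}\,\overline{X}$ coincides with that of $\widehat{T}$), and one should also record that $H^1(\Gamma_K,{\rm Pic}\,\overline{X})={\rm Hom}_{\rm cont}(\Gamma_K,\bZ^{m})=0$ so that inflation $H^1(G,{\rm Pic}\,X_K)\to H^1(\Gamma_k,{\rm Pic}\,\overline{X})$ is indeed an isomorphism; this point deserves the same explicit citation you gave to flasqueness.
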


In other words, 
we have 
$H^1(k,{\rm Pic}\, \overline{X})\simeq H^1(G,{\rm Pic}\, X_K)\simeq 
H^1(G,[\widehat{T}]^{fl})\simeq \Sha^2_\omega(G,\widehat{T})\simeq {\rm Br}(X)/{\rm Br}(k)$. 
We also see  
${\rm Br}_{\rm nr}(k(X)/k)={\rm Br}(X)\subset {\rm Br}(k(X))$ 
(see Saltman \cite[Proposition 10.5]{Sal99}, 
Colliot-Th\'{e}l\`{e}ne \cite[Theorem 5.11]{CTS07}, 
Colliot-Th\'{e}l\`{e}ne and Skorobogatov 
\cite[Proposition 6.2.7]{CTS21}).
Moreover, by taking the duals of Voskresenskii's exact sequence as in Theorem \ref{thV}, 
we get the following exact sequence
\begin{align*}
0\to \Sha(T)^\vee\simeq \Sha^2(G,\widehat{T})\to \Sha^2_\omega(G,\widehat{T})\to A(T)^\vee\to 0
\end{align*}
where $M^{\vee}={\rm Hom}(M,\bQ/\bZ)$ 
and 
the map $\Sha^2(G,\widehat{T})\to \Sha^2_\omega(G,\widehat{T})$ 
is the natural inclusion arising from the Chebotarev density theorem 
(see also Macedo and Newton \cite[Proposition 2.4]{MN22}). 

For norm one tori $T=R^{(1)}_{K/k}(\bG_m)$, 
we also obtain the group $T(k)/R$ of $R$-equivalence classes 
over a local field $k$ via 
$T(k)/R\simeq H^1(k,{\rm Pic}\,\overline{X})\simeq 
H^1(G,[\widehat{T}]^{fl})$ 
(see Colliot-Th\'{e}l\`{e}ne and Sansuc \cite[Corollary 5, page 201]{CTS77}, 
Voskresenskii \cite[Section 17.2]{Vos98} and Hoshi, Kanai and Yamasaki \cite[Section 7, Application 1]{HKY22}). 

We return to the Hasse norm principle for $K/k$. 

The Hasse norm principle for Galois extensions $K/k$ 
was investigated by Gerth \cite{Ger77}, \cite{Ger78} and 
Gurak \cite{Gur78a}, \cite{Gur78b}, \cite{Gur80} 
(see also \cite[pages 308--309]{PR94}). 
Gurak \cite[Corollary 2.2]{Gur78a} showed that 
the Hasse norm principle holds for a Galois extension $K/k$ 
if the restriction map 
$H^3(G_p,\bZ)\xrightarrow{\rm res}\bigoplus_{v\in V_k}H^3(G_p$ $\cap$ $G_v,\bZ)$ 
is injective for any $p\mid |G|$ where $G_p$ is a $p$-Sylow subgroup of $G={\rm Gal}(K/k)$. 
In particular, because $H^3(C_n,\bZ)=0$, 
the Hasse norm principle holds for a Galois extension $K/k$ 
if all the Sylow subgroups of $G={\rm Gal}(K/k)$ are cyclic. 
Recall that, by Tate's theorem (Theorem \ref{thTate}), 
the Hasse norm principle holds for $K/k$ 
if and only if the restriction map 
$H^3(G,\bZ)\xrightarrow{\rm res}\bigoplus_{v\in V_k}H^3(G_v,\bZ)$ 
is injective. 
This is also equivalent to $\Sha(T)=0$ 
by Ono's theorem (Theorem \ref{thOno}) 
via $\widehat{T}\simeq J_{G}$ and 
$\Sha(T)^\vee\simeq \Sha^1(G,T)^\vee\simeq\Sha^2(T,\widehat{T})$ 
and $H^2(G,\widehat{T})=H^2(G,J_{G})\simeq H^3(G,\bZ)$ 
where $T=R^{(1)}_{K/k}(\bG_m)$ is the norm one torus of $K/k$ 
and 
$M^{\vee}={\rm Hom}(M,\bQ/\bZ)$. 
%
Note that 
$\Sha^2_\omega(T)=0$ (and hence $A(T)=0$ and $\Sha(T)=0$) 
also follows 
from the retract $k$-rationality of $T=R^{(1)}_{K/k}(\bG_m)$ 
which is equivalent to that all the Sylow subgroups of $G$ is cyclic 
due to Endo and Miyata \cite[Theorem 2.3]{EM75}. 
For the rationality problem for norm one tori $T=R^{(1)}_{K/k}$, see e.g. 
\cite{EM75}, \cite{CTS77}, \cite{Hur84}, \cite{CTS87}, 
\cite{LeB95}, \cite{CK00}, \cite{LL00}, \cite{Flo}, \cite{End11}, 
\cite{HY17}, \cite{HHY20}, \cite{HY21}, \cite{HY24}, \cite{HY1}, \cite{HY2}. 

For non-Galois extension $K/k$ with $[K:k]=n$, 
the Hasse norm principle was investigated by 
Bartels \cite[Lemma 4]{Bar81a} (holds for $n=p$; prime), 
\cite[Satz 1]{Bar81b} (holds for $G\simeq D_n$), 
Voskresenskii and Kunyavskii \cite{VK84} (holds for $G\simeq S_n$ by $H^1(k,{\rm Pic}\, \overline{X})=0$), 
Kunyavskii \cite{Kun84} $(n=4$ (holds except for $G\simeq V_4, A_4))$, 
Drakokhrust and Platonov \cite{DP87} $(n=6$ (holds except for $G\simeq A_4, A_5))$, 
Endo \cite{End11} (holds for $G$ whose all $p$-Sylow subgroups are cyclic (general $n$),  
see also Hoshi and Yamasaki \cite[Remark 2.7]{HY3}),  
Macedo \cite{Mac20} (holds for $G\simeq A_n$ ($n\neq 4)$ by $H^1(k,{\rm Pic}\, \overline{X})=0$), 
Macedo and Newton \cite{MN22} 
($G\simeq A_4$, $S_4$, $A_5$, $S_5$, $A_6$, $A_7$ $($general $n))$, 
Hoshi, Kanai and Yamasaki \cite{HKY22} $(n\leq 15$ $(n\neq 12))$, 
(holds for $G\simeq M_n$ ($n=11,12,22,23,24$; $5$ Mathieu groups)), 
\cite{HKY23} $(n=12)$, 
\cite{HKY25} $(n=16$ and $G$; primitive$)$, 
\cite{HKY} $(G\simeq M_{11}$, $J_1$ $($general $n))$, 
Hoshi and Yamasaki \cite{HY2} (holds for $G\simeq {\rm PSL}_2(\bF_7)$ $(n=21)$, 
${\rm PSL}_2(\bF_8)$ $(n=63)$), 
\cite{HY3} (holds for metacyclic $G$ 
with trivial Schur multiplier $M(G)=0$ (general $n$)) 
where $G={\rm Gal}(L/k)\leq S_n$ is transitive and $L/k$ is the Galois closure of $K/k$.
We also refer to 
Browning and Newton \cite{BN16} and 
Frei, Loughran and Newton \cite{FLN18}. 

The following is the main theorem of this paper 
which gives a necessary and sufficient condition 
for the Hasse norm principle for separable field extensions $K/k$, 
i.e. the determination of the Shafarevich-Tate group $\Sha(T)$ 
of the norm one tori $T=R^{(1)}_{K/k}(\bG_m)$ 
(see Ono's theorem (Theorem \ref{thOno})), 
with $[K:k]=[G:H]=p^3$ or $p^2$ 
when the Galois group $G={\rm Gal}(L/k)$ of the Galois closure $L/k$ 
of $K/k$ 
is the Heisenberg group $E_p(p^3)\simeq (C_p)^2\rtimes C_p$ of order $p^3$ 
with Schur multiplier $M(G)\simeq H^2(G,\bQ/\bZ)\simeq (\bZ/p\bZ)^{\oplus 2}$: 
%
\begin{theorem}[Hasse norm principle for Heisenberg extensions of degree $p^3$, see Theorem \ref{thmain} for the precise statement]\label{thmain0}
Let $k$ be a field, $K/k$ be a finite 
separable field extension and $L/k$ be the Galois closure of $K/k$ 
with Galois groups $G={\rm Gal}(L/k)$ and $H={\rm Gal}(L/K)\lneq G$. 
Let $T=R^{(1)}_{K/k}(\bG_m)$ be the norm one torus of $K/k$, 
$A(T)$ be the kernel of the weak approximation of $T$ and 
$\Sha(T)$ be the Shafarevich-Tate group of $T$. 
Let $p\geq 3$ be a prime number. 
Assume that 
\begin{align*}
G=E_p(p^3)&=\langle a,b,c\mid a^p=b^p=c^p=1, [b,a]=c, [c,a]=1, [c,b]=1\rangle\\
&=\langle b,c\rangle\rtimes\langle a\rangle=\langle a,c\rangle\rtimes\langle b\rangle\simeq (C_p)^2\rtimes C_p
\end{align*}
is the Heisenberg group of order $p^3$, i.e. 
the extraspecial group of order $p^3$ with exponent $p$. 
By the condition $\bigcap_{\sigma\in G}H^\sigma=\{1\}$, we have $H=\{1\}$ or 
$H\simeq C_p$ with $H\neq Z(G)=\langle c\rangle$. 
Then\\
{\rm (1)} When $H=\{1\}$, 
we have $H^2(G,J_G)=\Sha^2_\omega(G,J_G)\simeq H^2(G,\bQ/\bZ)\simeq 
(\bZ/p\bZ)^{\oplus 2}\geq \Sha(T)^\vee$ 
where $\Sha(T)^\vee={\rm Hom}(\Sha(T),\bQ/\bZ)$ 
and\\ 
{\rm (I)} $\Sha(T)=0$ 
if and only if $A(T)\simeq (\bZ/p\bZ)^{\oplus 2}$ 
if and only if 
{\rm (i)} there exist $($ramified$)$ places $v_1$, $v_2$ of $k$ such that 
$(C_p)^2\simeq ((C_p)^2)^{(i)}\leq G_{v_1}$, 
$(C_p)^2\simeq ((C_p)^2)^{(j)}\leq G_{v_2}$ $(1\leq i<j\leq p+1)$ 
where there exist $p+1$ non-conjugate $(C_p)^2\simeq ((C_p)^2)^{(j)}\leq G$ $(1\leq j\leq p+1)$ 
or {\rm (ii)} there exists a $($ramified$)$ place $v$ of $k$ such that $G_v=G$;\\
{\rm (II)} $\Sha(T)\simeq \bZ/p\bZ$ 
if and only if $A(T)\simeq \bZ/p\bZ$ 
if and only if there exists a $($ramified$)$ place $v$ of $k$ such that 
$(C_p)^2\leq G_v$ and {\rm (I)} does not hold;\\
{\rm (III)} $\Sha(T)\simeq (\bZ/p\bZ)^{\oplus 2}$ 
if and only if $A(T)=0$ 
if and only if $G_v\leq C_p$ for any place $v$ of $k$.\\
{\rm (2)} When $H\simeq C_p$ with $H\neq Z(G)=\langle c\rangle$, 
we have $H^2(G,J_{G/H})\simeq  H^2(G,\bQ/\bZ)
\simeq  (\bZ/p\bZ)^{\oplus 2}\geq 
\Sha^2_\omega(G,J_{G/H})
\simeq \bZ/p\bZ\geq\Sha(T)^\vee$ 
where $\Sha(T)^\vee={\rm Hom}(\Sha(T),\bQ/\bZ)$ 
and\\
{\rm (I)} $\Sha(T)=0$ 
if and only if $A(T)\simeq \bZ/p\bZ$ 
if and only if there exists a $($ramified$)$ place $v$ of $k$ 
such that $(C_p)^2\leq G_v$;\\
{\rm (II)} $\Sha(T)\simeq\bZ/p\bZ$ 
if and only if $A(T)=0$ 
if and only if $G_v\leq C_p$ for any place $v$ of $k$. 
\end{theorem}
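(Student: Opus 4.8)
The plan is to reduce everything to explicit cohomology computations with the Heisenberg group $G=E_p(p^3)$ and its two relevant $G$-lattices $J_G$ (case $H=\{1\}$) and $J_{G/H}$ (case $H\simeq C_p$, $H\neq Z(G)$), and then to run these through the machine assembled in Theorems \ref{thOno}, \ref{thV}, \ref{thV2}, \ref{thCTS87} together with the dualized Voskresenskii sequence $0\to\Sha(T)^\vee\to\Sha^2_\omega(G,\widehat T)\to A(T)^\vee\to 0$. First I would record the subgroup structure of $G$: the maximal subgroups are exactly the $p+1$ copies of $(C_p)^2$ (each containing the center $Z(G)=\langle c\rangle$), every noncentral cyclic subgroup has order $p$, and any place $v$ of $k$ has $G_v$ either cyclic of order dividing $p$, equal to one of the $(C_p)^2$'s, or equal to $G$. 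Since unramified places give cyclic $G_v$ and all cyclic subgroups arise (Chebotarev), the decomposition behavior that can possibly contribute is governed entirely by ramified places with $G_v\in\{(C_p)^2,G\}$.

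Next I would compute the global cohomology. For the Galois case $H=\{1\}$ one has $\widehat T\simeq J_G$ and, by Theorem \ref{thV2}, $H^1(k,\mathrm{Pic}\,\overline X)\simeq H^2(G,J_G)\simeq H^3(G,\bZ)\simeq M(G)\simeq(\bZ/p\bZ)^{\oplus2}$, which is already $\Sha^2_\omega$ because every proper subgroup is either cyclic (where $H^3=0$) or a $(C_p)^2$, and the restriction maps $H^3(G,\bZ)\to H^3((C_p)^2,\bZ)\simeq\bZ/p\bZ$ for the $p+1$ maximal subgroups are the standard cup-product maps whose common kernel is zero; I would verify $\Sha^2_\omega(G,J_G)=H^2(G,J_G)$ by checking that these $p+1$ restrictions are jointly injective but that no single one is. For the non-Galois case I need $H^2(G,J_{G/H})$ and $\Sha^2_\omega(G,J_{G/H})$ for $H$ a noncentral $C_p$; here $J_{G/H}$ is the dual of the augmentation ideal of $\bZ[G/H]$ with $[G/H]=p^2$, and I would compute $H^2(G,J_{G/H})$ via the exact sequence $0\to\bZ\to\bZ[G/H]\to J_{G/H}\to0$ and Shapiro's lemma applied to $\bZ[G/H]$, reducing to cohomology of $H$ and dimension shifting; the claim is $H^2(G,J_{G/H})\simeq(\bZ/p\bZ)^{\oplus2}$ but $\Sha^2_\omega$ drops to $\bZ/p\bZ$ because restriction to the cyclic subgroups kills exactly one of the two summands. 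This $\Sha^2_\omega$ computation — pinning down which classes in $H^2(G,J_{G/H})$ survive restriction to all cyclic subgroups — is the main obstacle, and I would handle it by an explicit description of $H^*(G,\bZ)$ for the extraspecial $p$-group (the ring structure is classical) together with the double-coset/Mackey formula for $\mathrm{res}\circ\mathrm{cores}$ relating $J_{G/H}$-cohomology to $\bZ$-cohomology of $H\leq G$.

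Finally I would assemble the local-to-global statement. The exact sequence $0\to A(T)\to H^1(k,\mathrm{Pic}\,\overline X)^\vee\to\Sha(T)\to0$ of Theorem \ref{thV} shows $|A(T)|\cdot|\Sha(T)|=|H^1(k,\mathrm{Pic}\,\overline X)|$, which is $p^2$ in case (1) and, restricting further, the dualized sequence $0\to\Sha(T)^\vee\to\Sha^2_\omega(G,\widehat T)\to A(T)^\vee\to0$ shows $|A(T)|\cdot|\Sha(T)|=|\Sha^2_\omega(G,\widehat T)|$, which is $p^2$ in case (1) and $p$ in case (2); so in each case it suffices to compute $\Sha(T)^\vee\simeq\Sha^2(G,\widehat T)=\ker\big(H^2(G,\widehat T)\to\bigoplus_v H^2(G_v,\widehat T)\big)$ and read off $A(T)$ as the cokernel. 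Because $H^2(G_v,\widehat T)$ vanishes for $G_v$ cyclic, the restriction is controlled by the ramified $v$ with $G_v=(C_p)^2$ or $G_v=G$: if some $G_v=G$ then $\Sha^2=0$; otherwise $\Sha^2$ is the intersection of the kernels of $\mathrm{res}\colon H^2(G,\widehat T)\to H^2((C_p)^2_{(i)},\widehat T)$ over the occurring maximal subgroups, and I would count how many independent constraints a single $(C_p)^2$, respectively a pair of non-conjugate ones, impose — in case (1) one such subgroup kills one $\bZ/p\bZ$ factor (giving $\Sha(T)\simeq\bZ/p\bZ$, case II) and two non-conjugate ones kill both (giving $\Sha(T)=0$, case I), while none gives $\Sha(T)\simeq(\bZ/p\bZ)^{\oplus2}$ (case III), and in case (2) the single surviving $\bZ/p\bZ$ in $\Sha^2_\omega$ is killed by any $(C_p)^2\leq G_v$ precisely because that class already restricts nontrivially to the maximal subgroups, yielding the stated dichotomy. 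Matching the possible decomposition-group configurations against these kernel computations yields the list (I)–(III) in (1) and (I)–(II) in (2), and the Tamagawa-number consequence $\tau(T)=|H^1(k,\widehat T)|/|\Sha(T)|=p^2,p,1$ follows from Ono's formula once $|H^1(k,\widehat T)|=|H^1(G,J_{G/H})|=|G^{ab}|$ or the analogous count is in hand.
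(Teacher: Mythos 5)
Your overall architecture matches the paper's: reduce to $\Sha^2(G,\widehat T)=\bigcap_v\mathrm{Ker}(\mathrm{res}_{G_v})$ on $H^2(G,J_{G/H})$, note that only ramified places with $G_v\in\{(C_p)^2,G\}$ can impose constraints, compute the kernel of restriction to each of the $p+1$ maximal subgroups (each a distinct line in $(\bZ/p\bZ)^{\oplus 2}$, any two meeting trivially), and read off $A(T)$ from the dual Voskresenskii sequence. For case (1) this is essentially the paper's proof, where the isomorphism $\delta\colon H^2(G,J_G)\xrightarrow{\sim}H^3(G,\bZ)$ and the injectivity of $\delta'$ on every subgroup (valid because $H^2(H',\bZ[G/H])=0$ when $H=\{1\}$) let you transfer everything to $H^3(\cdot,\bZ)$, which the paper computes with explicit cocycles rather than the cohomology ring.

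The genuine gap is in case (2), for the two subgroups that meet $H=\langle a\rangle$ nontrivially, namely $H'=H$ itself and $H'_0=\langle a,c\rangle$. For these, $H^2(H',\bZ[G/H])\neq 0$ (by Shapiro/Mackey, since $H'\cap H\neq\{1\}$), so the connecting map $\delta'\colon H^2(H',J_{G/H})\to H^3(H',\bZ)$ is \emph{not} injective and you cannot read $\mathrm{Ker}(\mathrm{res})$ off the $H^3(\cdot,\bZ)$ data that drives the rest of your argument. Your justification in the final assembly --- that the surviving class is killed by any $(C_p)^2\leq G_v$ ``because that class already restricts nontrivially to the maximal subgroups'' --- is false at the level your computation lives at: the generator $\overline{f_2}$ of $\Sha^2_\omega(G,J_{G/H})$ restricts to \emph{zero} in $H^3(H'_0,\bZ)$ (this is exactly why, in case (1), a place with $G_v=\langle a,c\rangle$ fails to kill it). That it nevertheless dies in $H^2(H'_0,J_{G/H})$ is the content of the paper's Proposition \ref{prop2.11}, proved via the seven-term Lyndon--Hochschild--Serre sequence together with $H^1(H'_0,J_{G/H})=0$ and $(J_{G/H})^{H'_0}\simeq J_{C_p}$; without it, your argument would wrongly conclude that a place with $G_v=\langle a,c\rangle$ leaves $\Sha(T)\neq 0$, contradicting statement (2)(I). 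The companion computation $\mathrm{Ker}(\mathrm{res}\colon H^2(G,J_{G/H})\to H^2(H,J_{G/H}))=\langle\overline{f_2}\rangle$ (which is what actually makes $\Sha^2_\omega$ drop from $(\bZ/p\bZ)^{\oplus 2}$ to $\bZ/p\bZ$, and is again invisible from $H^3(H,\bZ)=0$) likewise needs the spectral-sequence/diagram-chase argument of Propositions \ref{prop2.14}--\ref{prop2.15}. You correctly flag this as ``the main obstacle,'' but the tools you name do not by themselves produce these two kernels, and the one heuristic you do offer points in the wrong direction for $H'_0$.
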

We note that 
the case where $p=3$ of Theorem \ref{thmain0} (2) is obtained 
in Hoshi, Kanai and Yamasaki \cite[Theorem 1.18, Table 2 for $G=9T7$]{HKY22} 
by using GAP \cite{GAP}.


We organize this paper as follows. 
In Section \ref{S2}, we prove Theorem \ref{thmain0}. 
Indeed, we give a proof of Theorem \ref{thmain} 
which is a more detailed version of Theorem \ref{thmain0}. 
As an application of Theorem \ref{thmain}, 
we also get the Tamagawa number $\tau(T)$ of the corresponding 
norm one tori $T=R^{(1)}_{K/k}(\bG_m)$. 
%

\section{Proof of Theorem \ref{thmain0} (Theorem \ref{thmain})}\label{S2}
Let $K/k$ be a separable field extension with $[K:k]=n$ 
and $L/k$ be the Galois closure of $K/k$. 
Let $G={\rm Gal}(L/k)$ and $H={\rm Gal}(L/K)\lneq G$ with $[G:H]=n$. 
The Galois group $G$ may be regarded as a transitive subgroup of 
the symmetric group $S_n$ of degree $n$. 
We may assume that 
$H$ is the stabilizer of one of the letters in $G\leq S_n$, 
i.e. $L=k(\theta_1,\ldots,\theta_n)$ and $K=L^H=k(\theta_i)$ 
where $1\leq i\leq n$. 
Then we have $\bigcap_{\sigma\in G} H^\sigma=\{1\}$ 
where $H^\sigma=\sigma^{-1}H\sigma$ and hence 
$H$ contains no normal subgroup of $G$ except for $\{1\}$. 

Let $Z(G)$ be the center of $G$, 
$[a,b]:=a^{-1}b^{-1}ab$ be the commutator of $a,b\in G$, 
$[G,G]:=\langle [a,b]\mid a,b\in G\rangle$ be the commutator subgroup of $G$ 
and $G^{ab}:=G/[G,G]$ be the abelianization of $G$, i.e. the maximal abelian quotient of $G$. 
Let $M(G)=H^2(G,\bC^\times)\simeq H^2(G,\bQ/\bZ)\xrightarrow[\sim]{\delta} 
H^3(G,\bZ)$ be the Schur multiplier of $G$ 
where $\delta$ is the connecting homomorphism 
(see e.g. Neukirch, Schmidt and Wingberg \cite[Chapter I, \S 3, page 26]{NSW00}). 

The following proposition is useful to get $H^2(G,J_{G/H})\geq \Sha^2_\omega(G,J_{G/H})$: 
\begin{proposition}[{Hoshi and Yamasaki \cite[Proposition 4.1]{HY3}}]\label{prop2.1}
Let $k$ be a field, $K/k$ be a finite 
separable field extension and $L/k$ be the Galois closure of $K/k$ with 
Galois groups $G={\rm Gal}(L/k)$ and $H={\rm Gal}(L/K)\lneq G$. 
Then we have an exact sequence 
\begin{align*}
H^1(G,\bQ/\bZ)\simeq G^{ab}\xrightarrow{\rm res} 
H^1(H,\bQ/\bZ)\simeq H^{ab}\to H^2(G,J_{G/H})\xrightarrow{\delta} 
H^3(G,\bZ)\simeq M(G)\xrightarrow{\rm res} H^3(H,\bZ)\simeq M(H)
\end{align*}
where $\delta$ is the connecting homomorphism.\\ 
{\rm (1)} 
If $H^1(G,\bQ/\bZ)\simeq G^{ab}\xrightarrow{\rm res}H^1(H,\bQ/\bZ)\simeq H^{ab}$ 
is surjective and $M(G)=0$, 
then $\Sha^2_\omega(G,J_{G/H})\leq H^2(G,J_{G/H})$ $\xrightarrow[\sim]{\delta}$ $H^3(G,\bZ)\simeq M(G)=0$;\\ 
{\rm (2)} 
If $H^1(G,\bQ/\bZ)\simeq G^{ab}\xrightarrow{\rm res}H^1(H,\bQ/\bZ)\simeq H^{ab}$ 
is surjective and $M(H)=0$, 
then $\Sha^2_\omega(G,J_{G/H})\leq H^2(G,J_{G/H})$ $\xrightarrow[\sim]{\delta}$ $H^3(G,\bZ)\simeq M(G)$;\\
{\rm (3)} If there exists $H^\prime$ $\lhd$ $G$ such that 
$G/H^\prime$ is abelian and $H^\prime$ $\cap$ $H=\{1\}$, then 
$H^1(G,\bQ/\bZ)\simeq G^{ab}\xrightarrow{\rm res}H^1(H,\bQ/\bZ)\simeq H^{ab}$ 
is surjective. 
In particular, 
if $M(G)=0$ $($resp. $M(H)=0$$)$, then 
$\Sha^2_\omega(G,J_{G/H})\leq H^2(G,J_{G/H})$ $\xrightarrow[\sim]{\delta}$ $H^3(G,\bZ)\simeq M(G)=0$  
$($resp.  $\Sha^2_\omega(G,J_{G/H})\leq H^2(G,J_{G/H})$ $\xrightarrow[\sim]{\delta}$ $H^3(G,\bZ)\simeq M(G)$$)$. 

When $k$ is a global field, $\Sha^2_\omega(G,J_{G/H})=0$ implies that 
$A(T)=0$, i.e. 
$T$ has the weak approximation property, 
and $\Sha(T)=0$, i.e. 
Hasse principle holds for all torsors $E$ under $T$, 
where $T=R^{(1)}_{K/k}(\bG_m)$ is the norm one tori of $K/k$ 
and hence the Hasse norm principle holds for $K/k$ 
$($see Section \ref{S1} and Ono's theorem $($Theorem \ref{thOno}$))$. 
\end{proposition}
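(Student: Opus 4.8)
The plan is to read off all three statements from the long exact $G$-cohomology sequence attached to the short exact sequence of $G$-lattices $0\to\bZ\to\bZ[G/H]\to J_{G/H}\to 0$ recalled in Section \ref{S1}. I would form this long exact sequence and apply Shapiro's lemma $H^i(G,\bZ[G/H])\simeq H^i(H,\bZ)$ (via $\bZ[G/H]\simeq{\rm Ind}_H^G\bZ$), under which the map induced by $\bZ\to\bZ[G/H]$, $1\mapsto\sum_{\bar g\in G/H}\bar g$, becomes the restriction $H^i(G,\bZ)\xrightarrow{\rm res}H^i(H,\bZ)$; this identification of the edge homomorphism with ${\rm res}$ is standard (see e.g. \cite[Chapter I]{NSW00}). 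Extracting degrees $2$ and $3$ and using $H^2(G,\bZ)\simeq H^1(G,\bQ/\bZ)\simeq G^{ab}$ and $H^2(H,\bZ)\simeq H^1(H,\bQ/\bZ)\simeq H^{ab}$ (from $0\to\bZ\to\bQ\to\bQ/\bZ\to 0$, $H^i(G,\bQ)=0$ for $i\geq1$, and Pontryagin duality of finite abelian groups), together with $H^3(G,\bZ)\simeq H^2(G,\bQ/\bZ)\simeq M(G)$ and $H^3(H,\bZ)\simeq M(H)$, one obtains exactly the stated five-term exact sequence, with $\delta$ the connecting homomorphism.

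For (1) and (2): if ${\rm res}: G^{ab}\to H^{ab}$ is surjective, exactness forces the connecting map $H^{ab}\to H^2(G,J_{G/H})$ to vanish, so $\delta: H^2(G,J_{G/H})\hookrightarrow H^3(G,\bZ)\simeq M(G)$ is injective; if in addition $M(G)=0$ this gives $H^2(G,J_{G/H})=0$, proving (1) because $\Sha^2_\omega(G,J_{G/H})$ is a subgroup of $H^2(G,J_{G/H})$ by definition. If instead $M(H)=0$, then ${\rm res}: M(G)\to M(H)$ is the zero map, so by exactness $\delta$ is surjective as well, whence $\delta: H^2(G,J_{G/H})\xrightarrow{\sim}M(G)$, proving (2).

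For (3): since $H'\lhd G$ and $H'\cap H=\{1\}$, the composite $H\hookrightarrow G\twoheadrightarrow G/H'$ has kernel $H\cap H'=\{1\}$, so $H$ embeds into the abelian group $G/H'$; in particular $H$ is abelian and $H=H^{ab}\hookrightarrow G/H'$. Since $\bQ/\bZ$ is divisible, hence injective as an abelian group, every $\chi\in H^1(H,\bQ/\bZ)={\rm Hom}(H,\bQ/\bZ)$ extends along $H\hookrightarrow G/H'$ to a character $\widetilde\chi$ of $G/H'$, and the pullback of $\widetilde\chi$ along $G\twoheadrightarrow G/H'$ is a character of $G$ restricting to $\chi$; thus ${\rm res}: H^1(G,\bQ/\bZ)\to H^1(H,\bQ/\bZ)$ is surjective and (1), (2) apply. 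Finally, for global $k$ one has $\widehat T\simeq J_{G/H}$ with $T=R^{(1)}_{K/k}(\bG_m)$, so $\Sha^2_\omega(G,J_{G/H})=0$ forces the middle term of the exact sequence $0\to\Sha(T)^\vee\simeq\Sha^2(G,\widehat T)\to\Sha^2_\omega(G,\widehat T)\to A(T)^\vee\to 0$ from Section \ref{S1} to vanish, hence $\Sha(T)=0$ and $A(T)=0$, and the Hasse norm principle for $K/k$ then follows from Theorem \ref{thOno}.

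There is no real difficulty here: everything follows formally from the short exact sequence $0\to\bZ\to\bZ[G/H]\to J_{G/H}\to 0$ and Shapiro's lemma. The one point requiring care is the bookkeeping — correctly identifying the edge map of the Shapiro long exact sequence with ${\rm res}$, and keeping the non-canonical identifications $H^1(\cdot,\bQ/\bZ)\simeq(\cdot)^{ab}$ consistent, so that the hypothesis ``${\rm res}$ surjective'' is literally the statement fed into the exactness arguments.
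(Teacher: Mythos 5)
Your proposal is correct and follows essentially the same route as the paper: the five-term exact sequence comes from $0\to\bZ\to\bZ[G/H]\to J_{G/H}\to 0$ together with Shapiro's lemma, (1) and (2) are read off by exactness, (3) rests on $H\cap[G,G]\subseteq H\cap H'=\{1\}$ (your character-extension phrasing via divisibility of $\bQ/\bZ$ is the Pontryagin dual of the paper's argument that $H^{ab}\simeq H\hookrightarrow G^{ab}$), and the final assertion follows from Voskresenskii's exact sequence and Ono's theorem exactly as in Section \ref{S1}. No gaps.
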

\begin{proof}
We include a proof for the convenience of the reader. 

By the definition, we have $0\to \bZ\to \bZ[G/H]\to J_{G/H}\to 0$ where 
$J_{G/H}\simeq \widehat{T}={\rm Hom}(T,\bG_m)$ 
and $T=R^{(1)}_{K/k}(\bG_m)$ is the norm one torus of $K/k$. 
Then we get 
\begin{align*}
H^2(G,\bZ)\to H^2(G,\bZ[G/H])\to H^2(G,J_{G/H})
\xrightarrow{\delta} H^3(G,\bZ)\to H^3(G,\bZ[G/H]). 
\end{align*} 
Then $H^2(G,\bZ)\simeq H^1(G,\bQ/\bZ)
={\rm Hom}(G,\bQ/\bZ)\simeq G^{ab}=G/[G,G]$. 
We have 
$H^2(G,\bZ[G/H])\simeq H^2(H,\bZ)\simeq H^{ab}$ 
and $H^3(G,\bZ[G/H])\simeq H^3(H,\bZ)\simeq M(H)$ by Shapiro's lemma 
(see e.g. Brown \cite[Proposition 6.2, page 73]{Bro82}, Neukirch, Schmidt and Wingberg \cite[Proposition 1.6.3, page 59]{NSW00}). 
This implies that 
\begin{align*}
H^1(G,\bQ/\bZ)\simeq G^{ab}\xrightarrow{\rm res} 
H^1(H,\bQ/\bZ)\simeq H^{ab}\to H^2(G,J_{G/H})\xrightarrow{\delta} 
H^3(G,\bZ)\simeq M(G)\xrightarrow{\rm res} H^3(H,\bZ)\simeq M(H). 
\end{align*}
(1) If $H^2(G,\bZ)\simeq G^{ab}\xrightarrow{\rm res}H^2(H,\bZ)\simeq H^{ab}$ 
is surjective, then $H^2(G,J_{G/H})\xrightarrow{\delta}H^3(G,\bZ)\simeq M(G)=0$ becomes isomorphic.\\ 
(2) 
If $H^2(G,\bZ)\simeq G^{ab}\xrightarrow{\rm res}H^2(H,\bZ)\simeq H^{ab}$ 
is surjective and $M(H)=0$, 
then $H^2(G,J_{G/H})\xrightarrow{\delta}H^3(G,\bZ)\simeq M(G)$ becomes isomorphic.\\
(3) Because $G/H^\prime$ is abelian, we have $[G,G]\leq H^\prime$. 
It follows from $H^\prime\cap H=\{1\}$ that $[G,G]\cap H=\{1\}$. 
Hence if we take $h\in H$, then 
$\widetilde{h}\in H/(H\cap [G,G])\simeq H[G,G]/[G,G]
\leq G^{ab}=G/[G,G]$ maps to 
$\overline{h}\in H^{ab}=H/[H,H]$. 
This implies that 
$G^{ab}\xrightarrow{\rm res}H^{ab}$ is surjective. 
The last assertion of (3) follows from (1), (2). 

Because $\Sha^2_\omega(G,J_{G/H})\leq H^2(G,J_{G/H})$, 
when $k$ is a global field, 
by Theorem \ref{thV} and Theorem \ref{thCTS87}, we have 
\begin{align*}
H^2(G,J_{G/H})=0\ \Rightarrow\ \Sha^2_\omega(G,J_{G/H})=0\ \Rightarrow\ A(T)=0\ {\rm and}\ \Sha(T)=0
\end{align*}
where $T=R^{(1)}_{K/k}(\bG_m)$ is the norm one torus of $K/k$ 
with $\widehat{T}\simeq J_{G/H}$. 
In particular, it follows from Ono's theorem (Theorem \ref{thOno}) that 
$\Sha(T)=0$ if and only if the Hasse norm principle holds for $K/k$. 
\end{proof}

A $p$-group $G$ is called {\it special} if 
either (i) $G$ is elementary abelian; or 
(ii) $Z(G)=[G,G]=\Phi(G)$ and $Z(G)$ is elementary abelian 
where $Z(G)$ is the center of $G$, 
$[G,G]$ is the commutator subgroup of $G$ and 
$\Phi(G)$ is the Frattini subgroup of $G$, i.e. 
the intersection of all maximal subgroups of $G$ 
(see e.g. Suzuki \cite[Definition 4.14, page 67]{Suz86}). 
It follows from $[G,G]=\Phi(G)$ that $G^{ab}=G/[G,G]$ 
is elementary abelian (see Karpilovsky \cite[Lemma 3.1.2, page 114]{Kar87}). 
A $p$-group $G$ is called {\it extraspecial} if 
$G$ is non-abelian, special and $Z(G)$ is cyclic. 
We see that $Z(G)\simeq C_p$. 
If $G$ is an extraspecial $p$-group, then $|G|=p^{2n+1}$ 
and there exist exactly $2$ extraspecial groups of order $p^{2n+1}$. 
For odd prime $p\geq 3$, they are 
$E_p(p^{2n+1})$, $E_{p^2}(p^{2n+1})$  
of order $p^{2n+1}$ with exponent $p$ and $p^2$ respectively 
(see 
Gorenstein \cite[Section 5.3, page 183]{Gor80}, 
Suzuki \cite[Definition 4.14, page 67]{Suz86}). 
Note that $D_4$ and $Q_8$ are extraspecial groups of order $8$ 
with exponent $4$. 
\begin{proposition}\label{prop2.2}
Let $k$ be a field, $K/k$ be a finite 
separable field extension and $L/k$ be the Galois closure of $K/k$ with 
Galois groups $G={\rm Gal}(L/k)$ and $H={\rm Gal}(L/K)\lneq G$.
Let $p\geq 3$ be a prime number. 
Assume that 
\begin{align*}
G=E_p(p^3)&=\langle a,b,c\mid a^p=b^p=c^p=1, [b,a]=c, [c,a]=1, [c,b]=1\rangle\\
&=\langle b,c\rangle\rtimes\langle a\rangle=\langle a,c\rangle\rtimes\langle b\rangle\simeq (C_p)^2\rtimes C_p
\end{align*}
is the Heisenberg group of order $p^3$, i.e. 
the extraspecial group of order $p^3$ with exponent $p$. 
By the condition $\bigcap_{\sigma\in G}H^\sigma=\{1\}$, we have $H=\{1\}$ or $
H\simeq C_p$ with $H\neq Z(G)=\langle c\rangle\simeq C_p$. 
Then\\
{\rm (1)} When $H=\{1\}$, we have 
$\Sha^2_\omega(G,J_G)=H^2(G,J_{G})\xrightarrow[\sim]{\delta} H^3(G,\bZ)\simeq M(G)\simeq (\bZ/p\bZ)^{\oplus 2}$.\\
{\rm (2)} When $H\simeq C_p$ with $H\neq Z(G)=\langle c\rangle\simeq C_p$, 
$H^2(G,J_{G/H})\xrightarrow[\sim]{\delta} H^3(G,\bZ)\simeq M(G)\simeq (\bZ/p\bZ)^{\oplus 2}$.
\end{proposition}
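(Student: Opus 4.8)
The plan is to deduce both parts from the exact sequence of Proposition~\ref{prop2.1}, the only external input being the Schur multiplier of the Heisenberg group. First I would dispose of the preliminary claim on $H$: in $G=E_p(p^3)$ every maximal subgroup contains $\Phi(G)=Z(G)=\langle c\rangle$, so every subgroup of order $\geq p^2$ contains the normal subgroup $Z(G)$; since $Z(G)$ itself is normal, the condition $\bigcap_{\sigma\in G}H^\sigma=\{1\}$ forces $H=\{1\}$ or $H\simeq C_p$ with $H\neq Z(G)$. Next I would record that $M(E_p(p^3))\simeq H^3(E_p(p^3),\bZ)\simeq(\bZ/p\bZ)^{\oplus 2}$, the classical Schur multiplier of the extraspecial group of order $p^3$ and exponent $p$; this can be obtained from Hopf's formula $M(G)\simeq(R\cap[F,F])/[F,R]$ applied to the given presentation, or simply quoted from Karpilovsky~\cite{Kar87}. (The other extraspecial group of order $p^3$, of exponent $p^2$, has trivial multiplier, which is why the two cases are kept apart.) Finally I would use freely the defining sequence $0\to\bZ\to\bZ[G]\to J_G\to 0$ with $\widehat T\simeq J_{G/H}$, together with the fact that $\bZ[G]$ is free, hence cohomologically trivial, over $\bZ[P]$ for every $P\leq G$, so that $H^i(P,J_G)\simeq H^{i+1}(P,\bZ)$ for all $i\geq 1$.

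For part (1), with $H=\{1\}$, both $H^{ab}$ and $M(H)$ vanish, so the sequence of Proposition~\ref{prop2.1} collapses to an isomorphism $\delta\colon H^2(G,J_G)\xrightarrow{\sim}H^3(G,\bZ)\simeq M(G)\simeq(\bZ/p\bZ)^{\oplus 2}$ (this also follows directly from the sequence above, or from Theorem~\ref{thV2}). For the remaining assertion $\Sha^2_\omega(G,J_G)=H^2(G,J_G)$, I would specialize to $P=C$ a cyclic subgroup to obtain $H^2(C,J_G)\simeq H^3(C,\bZ)\simeq H^1(C,\bZ)=0$; hence the restriction map $H^2(G,J_G)\to\bigoplus_{C\leq G\ \mathrm{cyclic}}H^2(C,J_G)$ is the zero map and its kernel $\Sha^2_\omega(G,J_G)$ is all of $H^2(G,J_G)$.

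For part (2), with $H\simeq C_p$, $H\neq Z(G)$, one has $H^1(H,\bQ/\bZ)\simeq H^{ab}\simeq\bZ/p\bZ$ and $H^3(H,\bZ)\simeq M(C_p)=0$, so by Proposition~\ref{prop2.1}(2) it remains only to check that the restriction $G^{ab}\to H^{ab}$ is surjective. For this I would apply Proposition~\ref{prop2.1}(3) with $H':=[G,G]=Z(G)=\langle c\rangle$ (the three coincide because $G$ is special): $G/H'=G^{ab}$ is abelian, and $H'\cap H$ is a subgroup of the order-$p$ group $H$ different from $H$ (as $H\neq Z(G)=H'$), hence trivial. This gives the surjectivity, and Proposition~\ref{prop2.1}(2) then yields $\delta\colon H^2(G,J_{G/H})\xrightarrow{\sim}H^3(G,\bZ)\simeq M(G)\simeq(\bZ/p\bZ)^{\oplus 2}$, completing the proof. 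The only genuinely nontrivial ingredient is the identification $M(E_p(p^3))\simeq(\bZ/p\bZ)^{\oplus 2}$; the rest is formal manipulation of the sequence of Proposition~\ref{prop2.1} and the observation that $\bZ[G]$ is induced from the trivial subgroup. I expect the real difficulty to surface only later, in the finer computation of $\Sha^2_\omega(G,J_{G/H})$ in case (2), where one must actually exploit the $p+1$ conjugacy classes of subgroups $(C_p)^2\leq G$.
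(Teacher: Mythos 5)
Your proof is correct and follows essentially the same route as the paper: both rest on the exact sequence of Proposition~\ref{prop2.1} together with the input $M(E_p(p^3))\simeq(\bZ/p\bZ)^{\oplus 2}$. The only (harmless) divergences are that in part (2) the paper verifies the hypothesis of Proposition~\ref{prop2.1}(3) with $H'=\langle b,c\rangle$ or $\langle a,c\rangle$ according to the conjugacy class of $H$, whereas your uniform choice $H'=[G,G]=Z(G)=\langle c\rangle$ works just as well and is slightly cleaner, and that your direct verification $H^2(C,J_G)\simeq H^3(C,\bZ)=0$ for cyclic $C$ in part (1) is a valid substitute for the paper's appeal to Theorems~\ref{thV2} and~\ref{thCTS87}.
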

\begin{proof}
We first see that $M(G)\simeq(\bZ/p\bZ)^{\oplus 2}$ 
(see Beyl and Tappe \cite[Corollary 4.16 (b), page 223]{BT82}, see also Karpilovsky \cite[Theorem 3.3.6, page 138]{Kar87}).\\
(1) It follows from Theorem \ref{thV2} and Theorem \ref{thCTS87}. 
This also follows from Proposition \ref{prop2.1} (2).\\
(2) There exist exactly $p+1$ subgroups $H\simeq C_p\leq G$ 
with $H\neq Z(G)=\langle c\rangle\simeq C_p$ up to conjugacy: 
$H=H_i=\langle ab^i\rangle\simeq C_p$ $(0\leq i\leq p-1)$, 
$H=H_{p}=\langle b\rangle\simeq C_p$ $\leq G$. 
Then it follows from Proposition \ref{prop2.1} (3) that 
$H^2(G,J_{G/H})\xrightarrow[\sim]{\delta} H^3(G,\bZ)\simeq M(G)$ is isomorphic 
because we can take $H^\prime\lhd G$ with 
$G/H^\prime$ is abelian and $H^\prime\cap H=\{1\}$ 
as $H^\prime=\langle b,c\rangle$ (resp. $H^\prime=\langle a,c\rangle$) 
for $H=H_i=\langle ab^i\rangle$ $(0\leq i\leq p-1)$ 
(resp. $H=H_p=\langle b\rangle$). 
\end{proof}


In order to get 
$\Sha(T)^\vee=\Sha^1(G,T)^\vee\simeq 
\Sha^2(G,\widehat{T})=\Sha^2(G,J_{G/H})$
where 
\begin{align*}
\Sha^2(G,J_{G/H})&={\rm Ker}\left\{H^2(G,J_{G/H})\xrightarrow{\rm res} \bigoplus_{v\in V_k} 
H^2(G_v,J_{G/H})\right\}\\
&=\bigcap_{v\in V_k}{\rm Ker}\left\{H^2(G,J_{G/H})\xrightarrow{\rm res}H^2(G_v,J_{G/H})\right\}, 
\end{align*}
we should analyze 
\begin{align*}
{\rm Ker}\left\{H^2(G,J_{G/H})\xrightarrow{\rm res} 
H^2(H^\prime,J_{G/H})\right\}
\end{align*}
for any $H^\prime\leq G$ beyond $\Sha^2_\omega(G,J_{G/H})$ 
where 
\begin{align*}
\Sha^2_\omega(G,J_{G/H})
&={\rm Ker}\left\{H^2(G,J_{G/H})\xrightarrow{{\rm res}}\bigoplus_{C_n\leq G:{\rm\, cyclic}}H^2(C_n,J_{G/H})\right\}\\
&=\bigcap_{C_n\leq G:{\rm\, cyclic}}{\rm Ker}\left\{H^2(G,J_{G/H})\xrightarrow{\rm res}H^2(C_n,J_{G/H})\right\} 
\end{align*} 
(see Section \ref{S1}). 
Note that if $v\in V_k$ is unramified, then $G_v\simeq C_n$. 
Hence we should understand the kernel of the restriction map 
$H^2(G,J_{G/H})\xrightarrow{\rm res}H^2(G_v,J_{G/H})$ 
for ramified places $v\in V_k$ with 
$G_v=H^\prime\not\simeq C_n$.\\


There exists a one-to-one correspondence between 
$H^2(G,A)$ and the set $M(G,A)$ of equivalence classes of extensions of $G$ by $A$ 
(see e.g. Hilton and Stammbach \cite[VI.10, page 206]{HS71}, 
Brown \cite[IV.3, page 91]{Bro82}, 
Karpilovsky \cite[Section 2.3, page 38]{Kar87}). 

Let $A$ be a $G$-module. 
For an extension $0 \to A\xrightarrow{i} E\xrightarrow{\pi} G\to 1$, 
we choose a set-theoretic section $s$ of $\pi$, i.e. a map $s:G\to E$, 
$g\mapsto \overline{g}$ with $\pi s={\rm id}_G$. 
We see that $0\in H^2(G,A)$ corresponds to the semidirect product $E=i(A)\rtimes G$ 
and 
the case where the $G$-action on $A$ is trivial corresponds 
to the central extensions of $G$ by $A$, i.e. $i(A)\leq Z(G)$ 
(see Hilton and Stammbach \cite[VI.10, page 206]{HS71}, Brown \cite[IV.3, page 92]{Bro82}). 

We can define a map $f:G\times G\to A$ by 
\begin{align*}
\overline{g}\overline{h}=i(f(g,h))\overline{gh}.
\end{align*}
For simplicity, we assume that $s(1)=\overline{1}=1$. 
Then this implies $f(g,1)=f(1,g)=0$ for any $g\in G$. 
We find that 
it follows from $(\overline{g_1}\,\overline{g_2})\overline{g_3}=\overline{g_1}(\overline{g_2}\,\overline{g_3})$ 
that 
$f$ is $2$-cocyle: 
$f(g_1,g_2)+f(g_1g_2,g_3)=g_1f(g_2,g_3)+f(g_1,g_2g_3)$ for any $g_1,g_2,g_3\in G$.
Then $f$ is $2$-coboundary if and only if there exists a map $\varphi: G\to A$ 
such that $f(g,h)=g\varphi(h)-\varphi(gh)+\varphi(g)$ 
(see Hilton and Stammbach \cite[VI.10, page 206]{HS71}, Brown \cite[IV.3, page 92]{Bro82}). 

From now on, for a prime number $p\geq 3$, 
we take the Heisenberg group 
\begin{align*}
G=E_p(p^3)&=\langle a,b,c\mid a^p=b^p=c^p=1, [b,a]=c, [c,a]=1, [c,b]=1\rangle\\
&=\langle b,c\rangle\rtimes\langle a\rangle=\langle a,c\rangle\rtimes\langle b\rangle\simeq (C_p)^2\rtimes C_p
\end{align*}
of order $p^3$, i.e. 
the extraspecial group of order $p^3$ with exponent $p$. 
We consider central extensions of $G$ by $A=\bZ/p\bZ=\langle z\rangle$: 
\begin{align*}
1\to \bZ/p\bZ=\langle z\rangle\xrightarrow{i} \overline{G}\, 
\mathrel{\mathop{\rightleftarrows}^{\pi}_{s}}\, G\to 1.
\end{align*}
We take $i(z)=\oz\in \overline{G}$. 
We also take $\oa=s(a)$, $\ob=s(b)$ with $\oa^p=\ob^p=\overline{1}$ 
and define $\oc=s(c):=[\ob,\oa]\in \overline{G}$. 
Then $\pi(\oc)=c$, $\oc^p=[\ob,\oa^p]([\oc,\oa]^p)^{-\frac{(p-1)}{2}}=[\ob,\overline{1}](\overline{1})^{-\frac{(p-1)}{2}}=\overline{1}$. 
Any element $g\in G$ can be written as 
$g=a^sb^tc^u$ $(0\leq s,t,u\leq p-1)$ 
by using commutator relations $[b,a]=c$, $[c,a]=1$, $[c,b]=1$. 
Define $\overline{a^sb^tc^u}:=\oa^s\ob^t\oc^u$. 
Then we have the following $p^2$ possibilities of central extensions $\overline{G}$ of $G$ by $A$
with $0\leq l,m\leq p-1$: 
\begin{align*}
\overline{G}=\langle \oa,\ob,\oc,\oz\mid 
\oa^p=\ob^p=\oc^p=1, [\ob,\oa]=\oc, [\oc,\oa]=\oz^l, [\oc,\ob]=\oz^m\rangle=
(\langle\oc,\oz\rangle\rtimes\langle\ob\rangle)\rtimes\langle \oa\rangle. 
\end{align*}
Indeed, we see that $H^2(G,\bZ/p\bZ)\simeq H^2(G,\bQ/\bZ)\simeq M(G)\simeq (\bZ/p\bZ)^{\oplus 2}$ 
(see Karpilovsky \cite[Theorem 3.3.6, page 138]{Kar87}). 
Then we may get the corresponding $2$-cocyles $f_1^l f_2^m\in Z^2(G,\bQ/\bZ)$ $(0\leq l,m\leq p-1)$ 
with $H^2(G,\bQ/\bZ)=\langle \overline{f_1}, \overline{f_2}\rangle\simeq (\bZ/p\bZ)^{\oplus 2}$ 
as in Lemma \ref{lem2.3} ($f_1, f_2$ correspond to $(l,m)=(1,0),(0,1)$ respectively):  
\begin{lemma}[{Packer \cite[Proposition 1.1]{Pak87}, see also Hatui and Singla \cite[Lemma 2.2]{HS21}}]\label{lem2.3}
Let $p\geq 3$ be a prime number and 
\begin{align*}
G=E_p(p^3)&=\langle a,b,c\mid a^p=b^p=c^p=1, [b,a]=c, [c,a]=1, [c,b]=1\rangle\\
&=\langle b,c\rangle\rtimes\langle a\rangle=\langle a,c\rangle\rtimes\langle b\rangle\simeq (C_p)^2\rtimes C_p
\end{align*}
be the Heisenberg group of order $p^3$, i.e. 
the extraspecial group of order $p^3$ with exponent $p$. 
Then $H^2(G,\bQ/\bZ)=\langle \overline{f_1}, \overline{f_2}\rangle\simeq (\bZ/p\bZ)^{\oplus 2}$ where 
$f_1,f_2\in Z^2(G,\bQ/\bZ)$ are $2$-cocyles of $G$ with coefficients $\bQ/\bZ$ which are given by 
\begin{align*}
f_1:&\ G\times G\to \bQ/\bZ, (a^{s_1}b^{t_1}c^{u_1},a^{s_2}b^{t_2}c^{u_2})\mapsto 
\frac{u_1s_2+t_1\frac{s_2(s_2-1)}{2}}{p},\\ 
f_2:&\ G\times G\to \bQ/\bZ, (a^{s_1}b^{t_1}c^{u_1},a^{s_2}b^{t_2}c^{u_2})\mapsto 
\frac{\frac{t_1(t_1-1)}{2}s_2+(t_1s_2+u_1)t_2}{p}
\end{align*}
where any element $g\in G$ can be written as 
$g=a^sb^tc^u$ $(0\leq s,t,u\leq p-1)$. 
\end{lemma}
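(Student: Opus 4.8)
The plan is to verify directly that $f_1$ and $f_2$ as written are $2$-cocycles and that their classes are linearly independent in $H^2(G,\bQ/\bZ)$, the latter combined with the already-recorded fact $H^2(G,\bQ/\bZ)\simeq M(G)\simeq(\bZ/p\bZ)^{\oplus 2}$ forcing $\langle\overline{f_1},\overline{f_2}\rangle$ to be all of $H^2(G,\bQ/\bZ)$. First I would fix the normal form: every $g\in G$ is written uniquely as $g=a^sb^tc^u$ with $0\le s,t,u\le p-1$, and one needs the multiplication rule in these coordinates. Using $[b,a]=c$, $[c,a]=[c,b]=1$ one computes $b^t a^s = a^s b^t c^{st}$ (moving each $b$ past each $a$ produces one central $c$), and also $b^t = $ (well-defined power), so that
\begin{align*}
(a^{s_1}b^{t_1}c^{u_1})(a^{s_2}b^{t_2}c^{u_2}) = a^{s_1+s_2}\,b^{t_1+t_2}\,c^{\,u_1+u_2+t_1s_2},
\end{align*}
where all exponents are read modulo $p$ (here $p$ odd is essential so that the binomial corrections from collapsing $a^{s_1+s_2}$, $b^{t_1+t_2}$ back into the range $[0,p-1]$ are consistent; this is where the terms $\frac{s_2(s_2-1)}{2}$ and $\frac{t_1(t_1-1)}{2}$ enter). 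The map $s:G\to\overline G$, $a^sb^tc^u\mapsto\oa^s\ob^t\oc^u$ described just before the lemma is a set-theoretic section of the universal central extension, and $f_i$ is by construction the cocycle $f(g_1,g_2)$ with $\overline{g_1}\,\overline{g_2}=i(f(g_1,g_2))\overline{g_1g_2}$ for the two generating extensions $(l,m)=(1,0)$ and $(0,1)$; so in principle the cocycle identity is automatic, but I would still want to exhibit the closed formula and check it.

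The key computational step is therefore: given the extension
\begin{align*}
\overline G=\langle\oa,\ob,\oc,\oz\mid \oa^p=\ob^p=\oc^p=1,\ [\ob,\oa]=\oc,\ [\oc,\oa]=\oz^l,\ [\oc,\ob]=\oz^m,\ \oz\ \text{central}\rangle,
\end{align*}
and the section $\overline{a^sb^tc^u}=\oa^s\ob^t\oc^u$, multiply two such elements in $\overline G$, push everything into the normal form $\oa^{S}\ob^{T}\oc^{U}\oz^{V}$, and read off $V$ as a function of $((s_1,t_1,u_1),(s_2,t_2,u_2))$; the coefficient of $l$ in $V$ is $f_1(g_1,g_2)\cdot p$ and the coefficient of $m$ is $f_2(g_1,g_2)\cdot p$. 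Concretely, commuting $\oc$ past $\oa$ costs a factor $\oz^l$ each time and past $\ob$ costs $\oz^m$ each time, and commuting $\ob$ past $\oa$ produces $\oc$ (hence, after further straightening, further powers of $\oz$); carefully tracking how many such swaps occur when reducing $\oa^{s_1}\ob^{t_1}\oc^{u_1}\oa^{s_2}\ob^{t_2}\oc^{u_2}$ yields exactly $u_1s_2+t_1\frac{s_2(s_2-1)}2$ swaps of the $l$-type and $\frac{t_1(t_1-1)}2 s_2+(t_1s_2+u_1)t_2$ of the $m$-type, matching the stated $f_1,f_2$. I would then record that $f(g,1)=f(1,g)=0$ (immediate since $s(1)=\overline1$), and note the cocycle condition $f(g_1,g_2)+f(g_1g_2,g_3)=g_1\cdot f(g_2,g_3)+f(g_1,g_2g_3)$ holds because the $G$-action on $\bQ/\bZ$ is trivial and associativity in $\overline G$ gives it for free — or, if a direct check is preferred, substitute the formulas and expand, using $\frac{n(n-1)}2+n = \frac{n(n+1)}2$ and the additivity of $s\mapsto \frac{s(s-1)}2$ up to the cross term $s_1s_2$.

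Finally, for linear independence of $\overline{f_1},\overline{f_2}$: suppose $\alpha f_1+\beta f_2$ is a coboundary, i.e. $\alpha f_1+\beta f_2=d\varphi$ for some $\varphi:G\to\bQ/\bZ$ with $\varphi(1)=0$. Restricting to suitable cyclic or rank-two elementary abelian subgroups detects $\alpha$ and $\beta$: evaluating on the pair $(a,c)$ and on $(b,c)$, or passing to the quotient/subquotient on which the relevant commutator is visible, one gets $\alpha\equiv 0$ and $\beta\equiv 0\pmod p$. Equivalently, since we already know $\dim_{\mathbb F_p}H^2(G,\bQ/\bZ)=2$, it suffices to show the central extensions attached to $(l,m)=(1,0)$ and $(0,1)$ are non-split and non-isomorphic as extensions; non-splitness is clear because in $\overline G$ the element $\oc$ of the section fails to commute with $\oa$ (resp. $\ob$), so no complement to $\langle\oz\rangle$ exists. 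The main obstacle I anticipate is purely bookkeeping: getting the $\binom{s}{2}$-type correction terms right when straightening $\oa^{s_1}\oa^{s_2}$ and $\ob^{t_1}\ob^{t_2}$ back into the chosen normal form while simultaneously collecting the $\oz$-powers produced by every $\oc$–$\oa$, $\oc$–$\ob$ and $\ob$–$\oa$ swap — this is where an off-by-one or a sign error in $\frac{s(s\pm1)}2$ would creep in, and where the hypothesis $p\ge 3$ (so that $2$ is invertible mod $p$) is used. Once the multiplication law is nailed down, everything else is routine.
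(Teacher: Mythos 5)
Your proposal is correct and follows essentially the same route as the paper: both derive $f_1,f_2$ by straightening $\oa^{s_1}\ob^{t_1}\oc^{u_1}\cdot\oa^{s_2}\ob^{t_2}\oc^{u_2}$ into normal form in the central extensions with $(l,m)=(1,0)$ and $(0,1)$ and reading off the $\oz$-exponent, with the cocycle identity automatic from associativity and generation of $H^2(G,\bQ/\bZ)$ following from the known rank $2$ plus independence (which the paper leaves to the cited references and to the later restriction computations on $\langle a,c\rangle$ and $\langle b,c\rangle$, exactly the subgroups you propose). One small correction: the $\frac{s_2(s_2-1)}{2}$-type terms do not come from reducing exponents of $a,b$ modulo $p$ (in $G$ itself the product law is exactly $a^{s_1+s_2}b^{t_1+t_2}c^{u_1+u_2+t_1s_2}$ with no binomial correction, since $c$ is central and $G$ has exponent $p$); they arise only in $\overline G$ from iteratively pushing the $\oc$'s created by $\ob$--$\oa$ swaps past the remaining $\oa$'s, as you correctly describe in your second paragraph.
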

\begin{proof}
We give a proof for the convenience of the reader. 

First we note that any element $g\in G$ can be written as 
$g=a^sb^tc^u$ $(0\leq s,t,u\leq p-1)$ 
by using commutator relations $[b,a]=c$, $[c,a]=1$, $[c,b]=1$. 

For $g_1=a^{s_1}b^{t_1}c^{u_1}$, 
$g_2=a^{s_2}b^{t_2}c^{u_2}$, 
we should compute  
$\overline{g_1}$ $\overline{g_2}=i(f(g_1,g_2))\overline{g_1g_2}$ 
with $i(f(g_1,g_2))\in\langle \oz\rangle$ and $\overline{g_1g_2}\in \langle\oa,\ob,\oc\rangle$.

When $(l,m)=(1,0)$, we have
\begin{align*}
\overline{a^{s_1}b^{t_1}c^{u_1}} \;
\overline{a^{s_2}b^{t_2}c^{u_2}}=
\overline{a}^{s_1}\overline{b}^{t_1}\overline{c}^{u_1}
\overline{a}^{s_2}\overline{b}^{t_2}\overline{c}^{u_2}
=\overline{a}^{s_1}\overline{b}^{t_1}\overline{a}^{s_2}
\overline{c}^{u_1}\overline{b}^{t_2}\overline{c}^{u_2}
\overline{z}^{u_1s_2}
=\overline{a}^{s_1}\overline{b}^{t_1}\overline{a}^{s_2}
\overline{b}^{t_2}\overline{c}^{u_1+u_2}
\overline{z}^{u_1s_2}.
\end{align*}
If $s_2=0$, then 
\begin{align*}
\overline{a^{s_1}b^{t_1}c^{u_1}} \;
\overline{a^{s_2}b^{t_2}c^{u_2}}
=\overline{a}^{s_1}\overline{b}^{t_1+t_2}\overline{c}^{u_1+u_2}
\overline{z}^{0}.
\end{align*}
If $s_2\geq 1$, then if follows from 
$\ob^{t_1}\oa^{s_2}=\oa^{s_2}\ob^{t_1}\oc^{t_1s_2}\oz^{t_1\frac{s_2}{2}(s_2-1)}$ that 
\begin{align*}
\overline{a^{s_1}b^{t_1}c^{u_1}} \;
\overline{a^{s_2}b^{t_2}c^{u_2}}
=\overline{a}^{s_1+s_2}\overline{b}^{t_1+t_2}\overline{c}^{u_1+u_2+t_1s_2}
\overline{z}^{u_1s_2+t_1\frac{s_2(s_2-1)}{2}}. 
\end{align*}
Hence we have 
$i(f(g_1,g_2))=\overline{z}^{u_1s_2+t_1\frac{s_2(s_2-1)}{2}}$, 
$\overline{g_1g_2}=\overline{a}^{s_1+s_2}\overline{b}^{t_1+t_2}\overline{c}^{u_1+u_2+t_1s_2}$. 
Therefore, we get $f_1=\varphi\circ f: G\times G\to\bQ/\bZ$ 
where $\varphi:\langle z\rangle\to \bZ/p\bZ$, $z^i\to \frac{i}{p}$ 
which corresponds to $(l,m)=(1,0)$

When $(l,m)=(0,1)$, we have 
\begin{align*}
\overline{a^{s_1}b^{t_1}c^{u_1}} \;
\overline{a^{s_2}b^{t_2}c^{u_2}}=
\overline{a}^{s_1}\overline{b}^{t_1}\overline{c}^{u_1}
\overline{a}^{s_2}\overline{b}^{t_2}\overline{c}^{u_2}
=\overline{a}^{s_1}\overline{b}^{t_1}\overline{a}^{s_2}
\overline{c}^{u_1}\overline{b}^{t_2}\overline{c}^{u_2}
=\overline{a}^{s_1}\overline{b}^{t_1}\overline{a}^{s_2}
\overline{b}^{t_2}\overline{c}^{u_1+u_2}
\overline{z}^{u_1t_2}. 
\end{align*}
If $t_1=0$, then 
\begin{align*}
\overline{a^{s_1}b^{t_1}c^{u_1}} \;
\overline{a^{s_2}b^{t_2}c^{u_2}}
=\overline{a}^{s_1+s_2}
\overline{b}^{t_2}\overline{c}^{u_1+u_2}
\overline{z}^{u_1t_2}. 
\end{align*}
If $t_1 \geq 1$, then it follows from 
$\ob^{t_1}\oa^{s_2}=\oa^{s_2}\ob^{t_1}\oc^{t_1s_2}\oz^{\frac{t_1}{2}(t_1-1)s_2}$ that 
\begin{align*}
\overline{a^{s_1}b^{t_1}c^{u_1}} \;
\overline{a^{s_2}b^{t_2}c^{u_2}}
=\overline{a}^{s_1+s_2}\overline{b}^{t_1+t_2}\overline{c}^{t_1s_2+u_1+u_2}
\overline{z}^{\frac{t_1(t_1-1)}{2}s_2+(t_1s_2+u_1)t_2}. 
\end{align*}
Hence we have 
$i(f(g_1,g_2))=\overline{z}^{\frac{t_1(t_1-1)}{2}s_2+(t_1s_2+u_1)t_2}$, 
$\overline{g_1g_2}=\overline{a}^{s_1+s_2}\overline{b}^{t_1+t_2}\overline{c}^{t_1s_2+u_1+u_2}$. 
Therefore, we get $f_2=\varphi\circ f: G\times G\to \bQ/\bZ$ 
where $\varphi:\langle z\rangle\to \bZ/p\bZ$, $z^i\to \frac{i}{p}$ 
which corresponds to $(l,m)=(0,1)$
\end{proof}
\begin{example}\label{ex2.4}
We see that 
\begin{align*}
f_1(a,a)=f_1(a,b)=f_1(a,c)=f_1(b,a)=f_1(b,b)=f_1(b,c)=f_1(c,b)=f_1(c,c)=0,\ 
f_1(c,a)=\frac{1}{p},\\
f_2(a,a)=f_2(a,b)=f_2(a,c)=f_2(b,a)=f_2(b,b)=f_2(b,c)=f_2(c,a)=f_2(c,c)=0,\ 
f_2(c,b)=\frac{1}{p}.
\end{align*}
\end{example}
\begin{lemma}\label{lem2.5}
Let $p\geq 3$ be a prime number and 
\begin{align*}
G=E_p(p^3)&=\langle a,b,c\mid a^p=b^p=c^p=1, [b,a]=c, [c,a]=1, [c,b]=1\rangle\\
&=\langle b,c\rangle\rtimes\langle a\rangle=\langle a,c\rangle\rtimes\langle b\rangle\simeq (C_p)^2\rtimes C_p
\end{align*}
be the Heisenberg group of order $p^3$, i.e. 
the extraspecial group of order $p^3$ with exponent $p$. 
Let $H^\prime\leq G$ be one of the $2p+5$ subgroups of $G$ up to conjugacy: 
\begin{align*}
&\{1\},\\ 
&Z(G)=\langle c\rangle\simeq C_p,\\
&H_0=\langle a\rangle\simeq C_p, H_1=\langle ab\rangle\simeq C_p,\ldots,  
H_{p-1}=\langle ab^{p-1}\rangle\simeq C_p, H_{p}=\langle b\rangle\simeq C_p,\\
&H_0^\prime=\langle a,c\rangle\simeq (C_p)^2, 
H_1^\prime=\langle ab,c\rangle\simeq (C_p)^2,\ldots,  
H_{p-1}^\prime=\langle ab^{p-1},c\rangle\simeq (C_p)^2, 
H_{p}^\prime=\langle b,c\rangle\simeq (C_p)^2,\\
&G=\langle a,b,c\rangle\simeq (C_p)^2\rtimes C_p. 
\end{align*}
We take $H^2(G,\bQ/\bZ)=\langle\overline{f_1},\overline{f_2}\rangle\simeq (\bZ/p\bZ)^{\oplus 2}$ 
where $\overline{f_1},\overline{f_2}$ are given as in Lemma \ref{lem2.3}. 
Then 
\begin{align*}
{\rm Ker}\{H^2(G,\bQ/\bZ)\xrightarrow{\rm res} H^2(H^\prime,\bQ/\bZ)\}=
\begin{cases}
\langle \overline{f_1},\overline{f_2}\rangle\simeq (\bZ/p\bZ)^{\oplus 2} 
& {\rm if}\quad 
H^\prime=\{1\}, Z(G)\simeq C_p, H_i\simeq C_p\ (0\leq i\leq p),\\
\langle \overline{f_2}\rangle\simeq\bZ/p\bZ 
& {\rm if}\quad H^\prime=H_{0}^\prime\simeq (C_p)^2,\\
\langle \overline{f_1}\,\overline{f_2}^{(-1)i^{-1}}\rangle\simeq \bZ/p\bZ 
& {\rm if}\quad H^\prime
=H_i^\prime\simeq (C_p)^2\ (1\leq i\leq p-1),\\
\langle \overline{f_1}\rangle\simeq\bZ/p\bZ 
& {\rm if}\quad H^\prime=H_{p}^\prime\simeq (C_p)^2,\\
0 & {\rm if}\quad H^\prime=G.
\end{cases}
\end{align*}
\end{lemma}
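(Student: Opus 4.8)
The plan is to work out the kernel of the restriction $\mathrm{res}\colon H^2(G,\bQ/\bZ)\to H^2(H^\prime,\bQ/\bZ)$ separately for each of the $2p+5$ representatives $H^\prime$, using the presentation $H^2(G,\bQ/\bZ)=\langle\overline{f_1},\overline{f_2}\rangle\simeq(\bZ/p\bZ)^{\oplus2}$ of Lemma \ref{lem2.3} and the values recorded in Example \ref{ex2.4}. Two families are immediate. If $H^\prime$ is cyclic, i.e. $H^\prime=\{1\}$, $H^\prime=Z(G)\simeq C_p$, or $H^\prime=H_i\simeq C_p$ with $0\le i\le p$, then $M(H^\prime)=0$, hence $H^2(H^\prime,\bQ/\bZ)\simeq H^2(H^\prime,\bC^\times)=0$ (for instance $\widehat H^2(C_n,\bQ/\bZ)\simeq\widehat H^0(C_n,\bQ/\bZ)=0$ because $\bQ/\bZ$ is divisible), so $\mathrm{res}$ is the zero map and its kernel is all of $\langle\overline{f_1},\overline{f_2}\rangle$. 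If $H^\prime=G$, then $\mathrm{res}=\mathrm{id}$ and the kernel is $0$. Thus the whole content is the $p+1$ maximal subgroups $H^\prime=H_i^\prime\simeq(C_p)^2$ with $0\le i\le p$ (each of which is normal, containing $[G,G]=Z(G)$).

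For those I would invoke the standard computation that, $p$ being odd, an elementary abelian $A\simeq(C_p)^2$ has $H_2(A,\bZ)\simeq\wedge^2A\simeq\bZ/p\bZ$, and --- crucially, since $\bQ/\bZ$ is injective as a $\bZ$-module, so that any abelian extension of $A$ by $\bQ/\bZ$ splits --- the antisymmetrization map
\begin{align*}
H^2(A,\bQ/\bZ)\ \xrightarrow{\ \sim\ }\ \mathrm{Hom}(\wedge^2A,\bQ/\bZ)\simeq\bZ/p\bZ,\qquad [g]\mapsto\bigl((x,y)\mapsto g(x,y)-g(y,x)\bigr),
\end{align*}
is an isomorphism; the target form is the commutator pairing $(x,y)\mapsto[\widetilde x,\widetilde y]$ of the central extension attached to $[g]$. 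Each $H_i^\prime$ is generated by $x_i$ and $c$, with $x_i=ab^i$ for $0\le i\le p-1$ and $x_p=b$; as $c\in Z(G)$, the two generators commute in $G$, so for any normalized cocycle $f$ on $G$ the commutator of lifts of $x_i$ and $c$ equals $f(x_i,c)-f(c,x_i)$, and therefore $\mathrm{res}[f]=0$ in $H^2(H_i^\prime,\bQ/\bZ)$ precisely when $f(x_i,c)=f(c,x_i)$.

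It then remains to evaluate $f_j(x_i,c)-f_j(c,x_i)$ for $j=1,2$. Writing $x_i=ab^i=a^1b^ic^0$, $c=a^0b^0c^1$ and substituting into the formulas of Lemma \ref{lem2.3} (equivalently, using Example \ref{ex2.4} with additivity of the commutator pairing in $x_i$) gives, for $0\le i\le p-1$,
\begin{align*}
f_1(ab^i,c)-f_1(c,ab^i)=-\tfrac1p,\qquad f_2(ab^i,c)-f_2(c,ab^i)=-\tfrac ip,
\end{align*}
and for $i=p$, $f_1(b,c)=f_1(c,b)$ while $f_2(b,c)-f_2(c,b)=-\tfrac1p$. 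Hence, for $0\le i\le p-1$, the class $\overline{f_1}^{\,l}\overline{f_2}^{\,m}$ lies in $\mathrm{Ker}\{H^2(G,\bQ/\bZ)\to H^2(H_i^\prime,\bQ/\bZ)\}$ iff $l+mi\equiv0\pmod p$, and for $i=p$ iff $m\equiv0\pmod p$; since these restriction maps are onto $\bZ/p\bZ$, each kernel is $\simeq\bZ/p\bZ$. Finally, picking a generator of the line $\{l+mi\equiv0\}$ yields $\langle\overline{f_2}\rangle$ for $i=0$ and, because $(-i,1)$ and $(1,-i^{-1})$ span the same line, $\langle\overline{f_1}\,\overline{f_2}^{(-1)i^{-1}}\rangle$ for $1\le i\le p-1$, while $\{m\equiv0\}$ gives $\langle\overline{f_1}\rangle$ for $i=p$; this is the asserted table. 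As a consistency check one may instead compute inside the central extensions $\overline G=\langle\oa,\ob,\oc,\oz\mid\oa^p=\ob^p=\oc^p=1,[\ob,\oa]=\oc,[\oc,\oa]=\oz^l,[\oc,\ob]=\oz^m\rangle$, where $\oc^{-1}\oa\,\oc=\oa\,\oz^{-l}$ and $\oc^{-1}\ob\,\oc=\ob\,\oz^{-m}$ force $[\,\overline{ab^i},\oc\,]=\oz^{-(l+mi)}$ and $[\ob,\oc]=\oz^{-m}$, giving the same conditions.

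The main obstacle here is bookkeeping rather than mathematics: first, one must get the $\bQ/\bZ$-coefficient statement $H^2((C_p)^2,\bQ/\bZ)\simeq\bZ/p\bZ$ right, discarding the Bockstein/symmetric contribution that is present with $\bZ/p\bZ$-coefficients and is precisely killed on passing to the divisible group $\bQ/\bZ$; and second, one must track the commutator and indexing conventions carefully enough that the solution line $l+mi\equiv0$ of the restriction condition is matched to the generator $\overline{f_1}\,\overline{f_2}^{(-1)i^{-1}}$ exactly as in the statement --- and, in particular, that $H_p^\prime=\langle b,c\rangle$ is treated as the ``missing'' line (generated by $b$, not $ab^i$) rather than by naively setting $i=p$.
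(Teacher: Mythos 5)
Your proof is correct and follows essentially the same route as the paper's: the cyclic subgroups are disposed of by $M(H^\prime)=0$, and for the maximal subgroups $H_i^\prime\simeq (C_p)^2$ the restricted class is detected by the commutator of lifts of the two generators in the associated central extension, leading to the same condition $l+mi\equiv 0\ ({\rm mod}\ p)$ and the same generators of the kernel. The only (minor) difference is that you treat $H_0^\prime$ and $H_p^\prime$ uniformly with the other $H_i^\prime$ via the antisymmetrization isomorphism $H^2((C_p)^2,\bQ/\bZ)\simeq {\rm Hom}(\wedge^2 (C_p)^2,\bQ/\bZ)$ --- which you justify explicitly, whereas the paper leaves it implicit --- while the paper instead restricts the cocycle formulas directly for those two cases and reserves the commutator argument for $1\leq i\leq p-1$.
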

\begin{proof}
We note that 
$H^2(G,\bQ/\bZ)\xrightarrow{\rm res} H^2(H^\prime,\bQ/\bZ)$, 
$f\mapsto f\circ\iota$ where 
$\iota:H^\prime\times H^\prime\hookrightarrow G\times G$ is the natural injection. 
From the construction of $f_1,f_2$ as in Lemma \ref{lem2.3} and  a paragraph before Lemma \ref{lem2.3}, 
we have $H^2(G,\bQ/\bZ)=\langle \overline{f_1},\overline{f_2}\rangle\simeq (\bZ/p\bZ)^{\oplus 2}$.\\
(1) When $H^\prime=\{1\}$, $Z(G)=\langle c\rangle\simeq C_p$, $H_i=\langle ab^i\rangle\simeq C_p$ 
$(0\leq i\leq p-1)$, $H_p=\langle b\rangle\simeq C_p$. 
Because $H^2(H^\prime,\bQ/\bZ)\simeq M(H^\prime)=0$, we have 
\begin{align*}
{\rm Ker}\{H^2(G,\bQ/\bZ)\xrightarrow{\rm res} H^2(H^\prime,\bQ/\bZ)\}=
\langle \overline{f_1},\overline{f_2}\rangle\simeq (\bZ/p\bZ)^{\oplus 2}. 
\end{align*}
(2) When $H^\prime=G=\langle a,b,c\rangle$. 
Because $H^2(G,\bQ/\bZ)\xrightarrow{\rm res} H^2(G,\bQ/\bZ)$ becomes the identity map, 
we have ${\rm Ker}\{H^2(G,\bQ/\bZ)$ $\xrightarrow{\rm res}$ $H^2(H^\prime,\bQ/\bZ)\}=0$.\\
(3) When $H^\prime=H_0^\prime=\langle a,c\rangle\simeq (C_p)^2$,  
$H_p^\prime=\langle b,c\rangle\simeq (C_p)^2$. 
It follows from 
\begin{align*}
&f_1(a^{s_1}c^{u_1},a^{s_2}c^{u_2})=u_1s_2/p,\quad f_2(a^{s_1}c^{u_1},a^{s_2}c^{u_2})=0,\\ 
&f_1(b^{t_1}c^{u_1},b^{t_2}c^{u_2})=0,\quad f_2(b^{t_1}c^{u_1},b^{t_2}c^{u_2})=u_1t_2/p 
\end{align*}
for any $0\leq s_1,t_1,u_1,s_2,t_2,u_2\leq p-1$ that 
\begin{align*}
{\rm Ker}\{H^2(G,\bQ/\bZ)\xrightarrow{\rm res} H^2(H^\prime,\bQ/\bZ)\}=
\begin{cases}
\langle f_2\rangle\simeq\bZ/p\bZ 
& {\rm if}\quad H^\prime=H_{0}^\prime=\langle a,c\rangle\simeq (C_p)^2,\\
\langle f_1\rangle\simeq\bZ/p\bZ 
& {\rm if}\quad H^\prime=H_{p}^\prime=\langle b,c\rangle\simeq (C_p)^2.
\end{cases}
\end{align*}
(4) When $H^\prime=H_i^\prime=\langle ab^i,c\rangle\simeq (C_p)^2$ $(1\leq i\leq p-1)$. 
We see that 
if $[\oc,\oa]=\oz^l$, $[\oc,\ob]=\oz^m$, then $[\oc,\oa\ob^i]=\oz^{l+mi}$.
Hence $[\oc,\oa\ob^i]=1$ if and only if $m\equiv (-l) i^{-1}$ $({\rm mod}\, p)$.  
Therefore, for $l=1$, $f_1f_2^{m}\in {\rm Ker}\{H^2(G,\bQ/\bZ)\xrightarrow{\rm res} H^2(H^\prime,\bQ/\bZ)\}$ 
if and only if $m\equiv (-1) i^{-1}$ $({\rm mod}\, p)$. 
Hence we have 
\begin{align*}
{\rm Ker}\{H^2(G,\bQ/\bZ)\xrightarrow{\rm res} H^2(H_i^\prime,\bQ/\bZ)\}=
\langle f_1f_2^{(-1)i^{-1}}\rangle\simeq \bZ/p\bZ. 
\end{align*}
\end{proof}

\begin{example}[$p=3$]\label{ex2.6}
Let $G=E_p(p^3)=\langle a,b,c\rangle\simeq (C_p)^2\rtimes C_p$. 
Assume that $p=3$. 
We get
\begin{align*}
&{\rm Ker}\{H^2(G,\bQ/\bZ)\xrightarrow{\rm res} H^2(\{1\},\bQ/\bZ)\}
=H^2(G,\bQ/\bZ)=\langle f_1,f_2\rangle\simeq (\bZ/3\bZ)^{\oplus 2},\\
&{\rm Ker}\{H^2(G,\bQ/\bZ)\xrightarrow{\rm res} H^2(G,\bQ/\bZ)\}=0
\end{align*}
and\\

\renewcommand{\arraystretch}{1.2}
\begin{tabular}{c||c|cccc}
$H^\prime\simeq C_p$
& $Z(G)=\langle c\rangle$ & $H_0=\langle a\rangle$ & $H_1=\langle ab\rangle$ 
& $H_2=\langle ab^2\rangle$ & $H_3=\langle b\rangle$\\\hline
${\rm Ker}\{H^2(G,\bQ/\bZ)\xrightarrow{\rm res} H^2(H^\prime,\bQ/\bZ)\}$ 
& $(\bZ/3\bZ)^{\oplus 2}$ & $(\bZ/3\bZ)^{\oplus 2}$ & $(\bZ/3\bZ)^{\oplus 2}$ & $(\bZ/3\bZ)^{\oplus 2}$ & $(\bZ/3\bZ)^{\oplus 2}$
\end{tabular}\vspace*{2mm}

\begin{tabular}{c||cccc}
$H^\prime\simeq (C_p)^2$ & 
$H_0^\prime=\langle a,c\rangle$ & $H_1^\prime=\langle ab,c\rangle$ & 
$H_2^\prime=\langle ab^2,c\rangle$ & $H_3^\prime=\langle b,c\rangle$\\\hline
${\rm Ker}\{H^2(G,\bQ/\bZ)\xrightarrow{\rm res} H^2(H^\prime,\bQ/\bZ)\}$ 
& $\langle f_2\rangle\simeq \bZ/3\bZ$ 
& $\langle f_1f_2^2\rangle\simeq \bZ/3\bZ$ 
& $\langle f_1f_2\rangle\simeq \bZ/3\bZ$ & $\langle f_1\rangle\simeq \bZ/3\bZ$ 
\end{tabular}
\vspace*{2mm}
\renewcommand{\arraystretch}{1}
\end{example}

%
\begin{theorem}[Hasse norm principle for Heisenberg extensions of degree $p^3$: the precise statement of Theorem \ref{thmain0}]\label{thmain}
Let $k$ be a field, $K/k$ be a finite 
separable field extension and $L/k$ be the Galois closure of $K/k$ with 
Galois groups $G={\rm Gal}(L/k)$ and $H={\rm Gal}(L/K)\lneq G$. 
Let $T=R^{(1)}_{K/k}(\bG_m)$ be the norm one torus of $K/k$, 
$A(T)$ be the kernel of the weak approximation of $T$ and 
$\Sha(T)$ be the Shafarevich-Tate group of $T$.  
Let $p\geq 3$ be a prime number. 
Assume that  
\begin{align*}
G=E_p(p^3)&=\langle a,b,c\mid a^p=b^p=c^p=1, [b,a]=c, [c,a]=1, [c,b]=1\rangle\\
&=\langle b,c\rangle\rtimes\langle a\rangle=\langle a,c\rangle\rtimes\langle b\rangle\simeq (C_p)^2\rtimes C_p
\end{align*}
is the Heisenberg group of order $p^3$, i.e. 
the extraspecial group of order $p^3$ with exponent $p$. 
By the condition $\bigcap_{\sigma\in G}H^\sigma=\{1\}$, we have $H=\{1\}$ or 
$H\simeq C_p$ with $H\neq Z(G)=\langle c\rangle$. 
Then\\
{\rm (1)} When $H=\{1\}$, 
we have 
\begin{align*}
H^2(G,J_G)\simeq H^2(G,\bQ/\bZ)=\langle\overline{f_1},\overline{f_2}\rangle\simeq (\bZ/p\bZ)^{\oplus 2}
\end{align*} 
where $\overline{f_1},\overline{f_2}$ are given as in Lemma \ref{lem2.3} 
and 
\begin{align*}
&{\rm Ker}\{H^2(G,J_G)\xrightarrow{\rm res} H^2(H^\prime,J_G)\}\\
&\simeq 
\begin{cases}
\langle \overline{f_1},\overline{f_2}\rangle\simeq (\bZ/p\bZ)^{\oplus 2} 
& {\rm if}\quad 
H^\prime=\{1\}, Z(G)=\langle c\rangle\simeq C_p, H_i=\langle ab^i\rangle\simeq C_p\ (0\leq i\leq p-1), H_p=\langle b\rangle\simeq C_p,\\
\langle \overline{f_2}\rangle\simeq\bZ/p\bZ 
& {\rm if}\quad H^\prime=H_{0}^\prime=\langle a,c\rangle\simeq (C_p)^2,\\
\langle \overline{f_1}\,\overline{f_2}^{(-1)i^{-1}}\rangle\simeq \bZ/p\bZ 
& {\rm if}\quad H^\prime
=H_i^\prime=\langle ab^i,c\rangle\simeq (C_p)^2\ (1\leq i\leq p-1),\\
\langle \overline{f_1}\rangle\simeq\bZ/p\bZ 
& {\rm if}\quad H^\prime=H_{p}^\prime=\langle b,c\rangle\simeq (C_p)^2,\\
0 
& {\rm if}\quad H^\prime=G=\langle a,b,c\rangle\simeq (C_p)^2\rtimes C_p.
\end{cases}
\end{align*}
In particular, when $k$ is a global field, 
we have $H^2(G,J_G)=\Sha^2_\omega(G,J_G)\simeq H^2(G,\bQ/\bZ)=\langle\overline{f_1},\overline{f_2}\rangle\simeq (\bZ/p\bZ)^{\oplus 2}
\geq \Sha(T)^\vee$ where $\Sha(T)^\vee={\rm Hom}(\Sha(T),\bQ/\bZ)$ 
and\\ 
{\rm (I)} $\Sha(T)=0$ 
if and only if $A(T)\simeq (\bZ/p\bZ)^{\oplus 2}$ 
if and only if 
{\rm (i)} there exist $($ramified$)$ places $v_1$, $v_2$ of $k$ such that 
$(C_p)^2\simeq ((C_p)^2)^{(i)}\leq G_{v_1}$, 
$(C_p)^2\simeq ((C_p)^2)^{(j)}\leq G_{v_2}$ $(1\leq i<j\leq p+1)$ 
where there exist $p+1$ non-conjugate $(C_p)^2\simeq ((C_p)^2)^{(j)}\leq G$ $(1\leq j\leq p+1)$ 
or {\rm (ii)} there exists a $($ramified$)$ place $v$ of $k$ such that $G_v=G$;\\
{\rm (II)} $\Sha(T)\simeq \bZ/p\bZ$ 
if and only if $A(T)\simeq \bZ/p\bZ$ 
if and only if there exists a $($ramified$)$ place $v$ of $k$ such that 
$(C_p)^2\leq G_v$ and {\rm (I)} does not hold;\\
{\rm (III)} $\Sha(T)\simeq (\bZ/p\bZ)^{\oplus 2}$ 
if and only if $A(T)=0$ 
if and only if $G_v\leq C_p$ for any place $v$ of $k$.\\
{\rm (2)} When $H\simeq C_p$ with $H\neq Z(G)=\langle c\rangle$, 
we may assume that $H=\langle a\rangle\simeq C_p$ without loss of generality 
and we have  
\begin{align*}
H^2(G,J_{G/H})\simeq H^2(G,\bQ/\bZ)=\langle\overline{f_1},\overline{f_2}\rangle\simeq (\bZ/p\bZ)^{\oplus 2}
\end{align*} 
where $\overline{f_1},\overline{f_2}$ are given as in Lemma \ref{lem2.3} 
and 
\begin{align*}
&{\rm Ker}\{H^2(G,J_{G/H})\xrightarrow{\rm res} H^2(H^\prime,J_{G/H})\}\\
&\simeq 
\begin{cases}
\langle \overline{f_1},\overline{f_2}\rangle\simeq (\bZ/p\bZ)^{\oplus 2} 
& {\rm if}\quad 
H^\prime=\{1\}, Z(G)=\langle c\rangle\simeq C_p, H_i=\langle ab^i\rangle\simeq C_p\ (1\leq i\leq p-1), H_p=\langle b\rangle\simeq C_p,\\
\langle \overline{f_2}\rangle\simeq\bZ/p\bZ 
& {\rm if}\quad H^\prime=H=H_0=\langle a\rangle\simeq C_p,\\
0 & {\rm if}\quad H^\prime=H_{0}^\prime=\langle a,c\rangle\simeq (C_p)^2,\\
\langle \overline{f_1}\,\overline{f_2}^{(-1)i^{-1}}\rangle\simeq \bZ/p\bZ 
& {\rm if}\quad H^\prime
=H_i^\prime=\langle ab^i,c\rangle\simeq (C_p)^2\ (1\leq i\leq p-1),\\
\langle \overline{f_1}\rangle\simeq\bZ/p\bZ 
& {\rm if}\quad H^\prime=H_{p}^\prime=\langle b,c\rangle\simeq (C_p)^2,\\
0 & {\rm if}\quad H^\prime=G=\langle a,b,c\rangle\simeq (C_p)^2\rtimes C_p.
\end{cases}
\end{align*}
In particular, when $k$ is a global field, 
we have $H^2(G,J_{G/H})\simeq  H^2(G,\bQ/\bZ)=\langle\overline{f_1},\overline{f_2}\rangle\simeq  (\bZ/p\bZ)^{\oplus 2}\geq 
\Sha^2_\omega(G,J_{G/H})\simeq \langle\overline{f_2}\rangle\simeq \bZ/p\bZ\geq
\Sha(T)^\vee$ where $\Sha(T)^\vee={\rm Hom}(\Sha(T),\bQ/\bZ)$ 
and\\
{\rm (I)} $\Sha(T)=0$ 
if and only if $A(T)\simeq \bZ/p\bZ$ 
if and only if there exists a $($ramified$)$ place $v$ of $k$ 
such that $(C_p)^2\leq G_v$;\\
{\rm (II)} $\Sha(T)\simeq\bZ/p\bZ$ 
if and only if $A(T)=0$ 
if and only if $G_v\leq C_p$ for any place $v$ of $k$. 
\end{theorem}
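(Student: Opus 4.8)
The plan is to reduce the whole statement to the determination of the restriction kernels $\mathrm{Ker}\{H^2(G,J_{G/H})\xrightarrow{\mathrm{res}}H^2(H',J_{G/H})\}$ for the $2p+5$ conjugacy classes of subgroups $H'\le G$, and then to read off $\Sha(T)$ and $A(T)$ from the results recalled in Section~\ref{S1}. By Proposition~\ref{prop2.2} we already know that $\delta\colon H^2(G,J_{G/H})\xrightarrow{\sim}H^3(G,\bZ)\simeq H^2(G,\bQ/\bZ)=\langle\overline{f_1},\overline{f_2}\rangle\simeq(\bZ/p\bZ)^{\oplus2}$, so only the restriction maps remain to be understood; in case~(2) we may also fix $H=\langle a\rangle$, the $p+1$ conjugacy classes of non-central $C_p$'s being interchanged by automorphisms of $G$.

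First I would dispose of every $H'$ with $H'\cap{}^xH=\{1\}$ for all $x\in G$. For such $H'$ the Mackey formula $\bZ[G/H]|_{H'}\simeq\bigoplus_{x\in H'\backslash G/H}\bZ[H'/(H'\cap{}^xH)]$ shows that $\bZ[G/H]|_{H'}$ is a free $\bZ[H']$-module, hence $H^{i}(H',\bZ[G/H])=0$ for $i\ge1$; consequently $\delta\colon H^2(H',J_{G/H})\xrightarrow{\sim}H^3(H',\bZ)$ is an isomorphism, and (using $0\to\bZ\to\bQ\to\bQ/\bZ\to0$ with $\bQ$ acyclic, together with the compatibility of $\delta$ with restriction) the kernel in question coincides with $\mathrm{Ker}\{H^2(G,\bQ/\bZ)\xrightarrow{\mathrm{res}}H^2(H',\bQ/\bZ)\}$, which is exactly what Lemma~\ref{lem2.5} provides. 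Since the conjugates of $H=\langle a\rangle$ are precisely the $\langle ac^t\rangle$ $(0\le t\le p-1)$, this argument applies to $\{1\}$, $Z(G)$, the $H_i$ with $H_i\neq H$, $H_p$, the $H_i'$ with $1\le i\le p-1$, and $H_p'$; the case $H'=G$ is trivial. In particular part~(1), where $H=\{1\}$, is entirely settled this way, consistently with Proposition~\ref{prop2.2}(1).

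The genuinely new point in part~(2) is the restriction to $H'=H=\langle a\rangle$ and to $H'=H_0'=\langle a,c\rangle$ --- the only proper subgroups containing a conjugate of $H$ --- where $H^2(H',J_{G/H})$ is strictly larger than $H^2(H',\bQ/\bZ)$ and the transport argument fails. Here I would work out the explicit Mackey decompositions $\bZ[G/H]|_{\langle a\rangle}\simeq\bZ^{\oplus p}\oplus\bZ[\langle a\rangle]^{\oplus(p-1)}$ and $\bZ[G/H]|_{\langle a,c\rangle}\simeq\bigoplus_{t=0}^{p-1}\bZ[\langle a,c\rangle/\langle ac^t\rangle]$, substitute them into the long exact sequence attached to $0\to\bZ\to\bZ[G/H]\to J_{G/H}\to0$ (using $H^2(C_p,\bZ)\simeq\bZ/p\bZ$, $H^3(C_p,\bZ)=0$, $H^2((C_p)^2,\bZ)\simeq(\bZ/p\bZ)^{\oplus2}$, $H^3((C_p)^2,\bZ)\simeq\bZ/p\bZ$, and the fact that $H^2(H',\bZ)\to H^2(H',\bZ[G/H])$ is componentwise a restriction map), and then trace $2$-cocycles representing $\overline{f_1}$ and $\overline{f_2}$ through the resulting resolution. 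Combined with the upper bound from Lemma~\ref{lem2.5} (vacuous for $\langle a\rangle$, equal to $\langle\overline{f_2}\rangle$ for $\langle a,c\rangle$), this should give $\mathrm{res}(\overline{f_2})=0\neq\mathrm{res}(\overline{f_1})$ in $H^2(\langle a\rangle,J_{G/H})$, hence kernel $\langle\overline{f_2}\rangle\simeq\bZ/p\bZ$, and $\mathrm{res}(\overline{f_2})\neq0$ in $H^2(\langle a,c\rangle,J_{G/H})$, hence kernel $0$. I expect this explicit cocycle bookkeeping through a Mackey-decomposed resolution to be the main obstacle; the rest is formal.

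Finally I would assemble the conclusion. By Poitou--Tate duality $\Sha(T)^\vee\simeq\Sha^2(G,J_{G/H})$, and since by Chebotarev every cyclic subgroup of $G$ occurs as a decomposition group at an unramified place, $\Sha^2(G,J_{G/H})=\Sha^2_\omega(G,J_{G/H})\cap\bigcap\mathrm{Ker}\{H^2(G,J_{G/H})\xrightarrow{\mathrm{res}}H^2(G_v,J_{G/H})\}$, the last intersection being over the ramified places $v$ with $G_v$ non-cyclic, while $\Sha^2_\omega$ is the intersection of the restriction kernels over the cyclic subgroups --- which the table gives as $(\bZ/p\bZ)^{\oplus2}$ in case~(1) and $\langle\overline{f_2}\rangle\simeq\bZ/p\bZ$ in case~(2). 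The only non-cyclic subgroups that can be a $G_v$ are the $p+1$ maximal subgroups $\simeq(C_p)^2$ (normal, hence pairwise non-conjugate) and $G$ itself; in case~(1) their kernels $\langle\overline{f_2}\rangle$, $\langle\overline{f_1}\,\overline{f_2}^{(-1)i^{-1}}\rangle$ $(1\le i\le p-1)$, $\langle\overline{f_1}\rangle$ run over all $p+1$ lines of $H^2(G,J_G)\simeq\bF_p^2$ while $G$ contributes $0$, so $\Sha^2(G,J_G)$ is $(\bZ/p\bZ)^{\oplus2}$, $\bZ/p\bZ$, or $0$ according as zero, exactly one, or at least two conjugacy classes of $(C_p)^2$ (or a copy of $G$) arise among the ramified decomposition groups, which are cases (III), (II), (I); intersecting those same lines with $\langle\overline{f_2}\rangle$ yields the analogous dichotomy in case~(2). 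The values of $A(T)$ then drop out of the dual Voskresenskii sequence $0\to\Sha^2(G,J_{G/H})\to\Sha^2_\omega(G,J_{G/H})\to A(T)^\vee\to0$ of Theorem~\ref{thV} and Theorem~\ref{thCTS87}, since $|\Sha^2_\omega(G,J_{G/H})|=p^2$ in case~(1) and $p$ in case~(2).
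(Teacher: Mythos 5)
Your reduction of the theorem to the restriction kernels, your treatment of every subgroup $H^\prime$ with $H^\prime\cap x^{-1}Hx=\{1\}$ for all $x\in G$ (Mackey decomposition, cohomological triviality of $\bZ[G/H]|_{H^\prime}$, injectivity of $\delta^\prime$, transport to Lemma \ref{lem2.5}), and your final assembly of $\Sha(T)$ and $A(T)$ from the resulting table all agree with the paper's argument (this is exactly Lemma \ref{lem2.9} and the surrounding diagram chase); in particular part (1) is complete, as is the Poitou--Tate/Chebotarev/Voskresenskii bookkeeping at the end.

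The gap is in part (2) at the two exceptional subgroups $H^\prime=H=\langle a\rangle$ and $H^\prime=H_0^\prime=\langle a,c\rangle$, which carry the real content of the theorem. There you only name a method (``trace $2$-cocycles representing $\overline{f_1}$ and $\overline{f_2}$ through the Mackey-decomposed resolution'') and assert the expected outcome, explicitly flagging it as ``the main obstacle'' without carrying it out. This is not a routine verification: since $H^3(\langle a\rangle,\bZ)=0$, Lemma \ref{lem2.5} gives no constraint at all on the kernel of restriction into $H^2(\langle a\rangle,J_{G/H})\simeq(\bZ/p\bZ)^{\oplus p-1}$, and the identification of classes of $H^2(G,J_{G/H})$ with the cocycles $f_1,f_2$ passes through the connecting isomorphism $\delta$, so ``tracing a cocycle'' requires explicitly inverting $\delta$ --- precisely the bookkeeping your sketch defers. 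The paper avoids cocycles here entirely: it applies the seven-term Lyndon--Hochschild--Serre exact sequence twice (to $H_0^\prime\lhd G$ in Proposition \ref{prop2.11}, giving kernel $0$ from $H^1(H_0^\prime,J_{G/H})=0$ and $H^2(C_p,J_{C_p})=0$; and to $H\lhd H_0^\prime$ in Proposition \ref{prop2.14}), then combines a diagram chase (Lemma \ref{lem2.13}, showing ${\rm Ker}(\delta^\prime)\subset{\rm Ker}({\rm res}_3^\prime)$) with a counting argument to identify ${\rm Ker}(\delta^\prime)={\rm Ker}({\rm res}_3^\prime)$ and conclude in Proposition \ref{prop2.15} that the kernel of restriction to $H$ is $\delta^{-1}({\rm Ker}({\rm res}_4))=\langle\overline{f_2}\rangle$. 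Until you either complete your cocycle computation or substitute an argument of this kind, the values $\langle\overline{f_2}\rangle$ and $0$ for these two subgroups --- hence $\Sha^2_\omega(G,J_{G/H})\simeq\bZ/p\bZ$ and all of part (2) --- remain unproven.
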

\begin{remark}\label{rem2.8}
We note that, when $k$ is a global field, 
for a norm one torus $T=R^{(1)}_{K/k}(\bG_m)$ of $K/k$, 
(i) $A(T)=0$ if and only if $T$ has the weak approximation property; 
(ii) $\Sha(T)=0$ if and only if 
Hasse principle holds for all torsors $E$ under $T$.
It also follows from Ono's theorem (Theorem \ref{thOno}) that 
$\Sha(T)=0$ if and only if Hasse norm principle holds for $K/k$ 
(see Section \ref{S1}). 
\end{remark}
\begin{proof}[Proof of Theorem \ref{thmain}]
There exist exactly the $2p+5$ subgroups $H^\prime\leq G$ up to conjugacy: 
\begin{align*}
&\{1\},\\ 
&Z(G)=\langle c\rangle\simeq C_p,\\
&H_0=\langle a\rangle, H_1=\langle ab\rangle,\ldots,  
H_{p-1}=\langle ab^{p-1}\rangle, H_{p}=\langle b\rangle\simeq C_p,\\
&H_0^\prime=\langle a,c\rangle, H_1^\prime=\langle ab,c\rangle,\ldots,  
H_{p-1}^\prime=\langle ab^{p-1},c\rangle, H_{p}^\prime=\langle b,c\rangle\simeq (C_p)^2,\\
&G=\langle a,b,c\rangle\simeq (C_p)^2\rtimes C_p. 
\end{align*}

In order to get 
\begin{align*}
\Sha^2_\omega(G,J_{G/H})&=\bigcap_{C_p\leq G:{\rm\, cyclic}}{\rm Ker}\left\{H^2(G,J_{G/H})\xrightarrow{\rm res}H^2(C_p,J_{G/H})\right\},\\
\Sha(T)^\vee\simeq \Sha^2(G,J_{G/H})&=\bigcap_{H^\prime\leq G}{\rm Ker}\left\{H^2(G,J_{G/H})\xrightarrow{\rm res}H^2(H^\prime,J_{G/H})\right\}, 
\end{align*}
we should analyze 
\begin{align*}
{\rm Ker}\left\{H^2(G,J_{G/H})\xrightarrow{\rm res}H^2(H^\prime,J_{G/H})\right\}
\end{align*} 
for each subgroups 
$H^\prime=\{1\}$, $Z(G)=\langle c\rangle$, $H_i=\langle ab^i\rangle$ $(0\leq i\leq p)$, $H_p=\langle b\rangle$, 
$H_i^\prime=\langle ab^i,c\rangle$ $(0\leq i\leq p-1)$, $H_p^\prime=\langle b,c\rangle$, $G=\langle a,b,c\rangle\leq G=E_p(p^3)$. 

As in the proof of Proposition \ref{prop2.1}, from the definition 
$0\to \bZ\to \bZ[G/H]\to J_{G/H}\to 0$ where 
$J_{G/H}\simeq \widehat{T}={\rm Hom}(T,\bG_m)$ and $T=R^{(1)}_{K/k}(\bG_m)$, we get 
\begin{align*}
H^2(G,\bZ)\to H^2(G,\bZ[G/H])\to H^2(G,J_{G/H})
\xrightarrow{\delta} H^3(G,\bZ)\to H^3(G,\bZ[G/H])
\end{align*} 
where $\delta$ is the connecting homomorphism. 
We have $H^2(G,\bZ)\simeq H^1(G,\bQ/\bZ)
={\rm Hom}(G,\bQ/\bZ)\simeq G^{ab}=G/[G,G]$, 
$H^2(G,\bZ[G/H])\simeq H^2(H,\bZ)\simeq H^{ab}$ 
and $H^3(G,\bZ[G/H])\simeq H^3(H,\bZ)\simeq M(H)$ by Shapiro's lemma 
(see e.g. Brown \cite[Proposition 6.2, page 73]{Bro82}, Neukirch, Schmidt and Wingberg \cite[Proposition 1.6.3, page 59]{NSW00}). 
Hence 
\begin{align*}
H^1(G,\bQ/\bZ)\simeq G^{ab}\xrightarrow{\rm res} 
H^1(H,\bQ/\bZ)\simeq H^{ab}
\to H^2(G,J_{G/H})\xrightarrow{\delta} M(G)\simeq (\bZ/p\bZ)^{\oplus 2}\xrightarrow{\rm res} M(H)=0 
\end{align*}
where $H=\{1\}$ or $H\simeq C_p$ with $H\neq Z(G)=\langle c\rangle$. 
We also see $G^{ab}\simeq (\bZ/p\bZ)^{\oplus 2}$ and $H^{ab}\simeq H$. 
It follows from Proposition \ref{prop2.2} that 
$H^2(G,J_{G/H})\xrightarrow[\sim]{\delta} H^3(G,\bZ)\simeq M(G)$ 
becomes isomorphic. 

Because res and $\delta$ are functrial with ${\rm res}\circ \delta=\delta\circ {\rm res}$ 
(see e.g. Neukirch, Schmidt and Wingberg \cite[Proposition 1.5.2]{NSW00}), 
for any $H^\prime\leq G$, 
we have the following commutative diagram with exact horizontal lines:
\begin{align*}
\xymatrix@=30pt{
H^2(G,\bZ)\atop\simeq G^{ab}\simeq (\bZ/p\bZ)^{\oplus 2} 
\ar[r] \ar[d]_{\rm res_1} &
H^2(G,\bZ[G/H])\atop \simeq H^{ab}\simeq H \ar[r]^-0 \ar[d]_{\rm res_2} & 
H^2(G,J_{G/H}) \ar[r]^-{\delta}_-{\sim} \ar[d]_{\rm res_3} & 
H^3(G,\bZ)\atop \simeq M(G)\simeq (\bZ/p\bZ)^{\oplus 2} 
\ar[r] \ar[d]_{\rm res_4} &
H^3(G,\bZ[G/H])\atop \simeq M(H)=0 \ar[d]_{\rm res_5}
\\
H^2(H^\prime,\bZ) \ar[r] &
H^2(H^\prime,\bZ[G/H]) \ar[r] & 
H^2(H^\prime,J_{G/H}) \ar[r]^-{\delta^\prime} & 
H^3(H^\prime,\bZ) \ar[r] &
H^3(H^\prime,\bZ[G/H]). 
}
\end{align*}

In order to analyze 
\begin{align*}
{\rm Ker}\left\{H^2(G,J_{G/H})\xrightarrow{\rm res_3}H^2(H^\prime,J_{G/H})\right\},
\end{align*} 
we first see
$\delta({\rm Ker}({\rm res}_3))\leq {\rm Ker}({\rm res}_4)$. 
Hence 
if $\delta^\prime: H^2(H^\prime,J_{G/H})\to H^3(H^\prime,\bZ)$ is injective, then 
we get 
${\rm Ker}({\rm res}_3)\xrightarrow[\sim]{\delta} {\rm Ker}({\rm res}_4)$ 
where ${\rm Ker}({\rm res}_4)$ can be obtaind as in Lemma \ref{lem2.5} 
via the isomorphisms $H^2(G,\bQ/\bZ)\xrightarrow[\sim]{} H^3(G,\bZ)$ 
and $H^2(H^\prime,\bQ/\bZ)\xrightarrow[\sim]{} H^3(H^\prime,\bZ)$. 

\begin{lemma}\label{lem2.9}
Let $H=\{1\}$ or $H\simeq C_p$ with $H\neq Z(G)=\langle c\rangle$. 
For $H^\prime\leq G$, 
if $H\cap x^{-1}H^\prime x=\{1\}$ for any $x\in G$, then $H^2(H^\prime,\bZ[G/H])=0$ 
and hence $\delta^\prime: H^2(H^\prime,J_{G/H})\to H^3(H^\prime,\bZ)$ is injective. 
\end{lemma}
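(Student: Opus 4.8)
The plan is to use Shapiro's lemma to compute $H^2(H^\prime, \bZ[G/H])$ explicitly as a direct sum of cohomology groups of point stabilizers, and then observe that under the hypothesis all these stabilizers are trivial. First I would apply the decomposition of the $H^\prime$-set $G/H$ into $H^\prime$-orbits: writing $G = \coprod_j H^\prime x_j H$ for a set of double coset representatives $x_j$, we get an isomorphism of $H^\prime$-modules $\bZ[G/H] \simeq \bigoplus_j \bZ[H^\prime/(H^\prime \cap x_j H x_j^{-1})]$. Hence by Shapiro's lemma $H^2(H^\prime, \bZ[G/H]) \simeq \bigoplus_j H^2(H^\prime \cap x_j H x_j^{-1}, \bZ)$.

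Next I would invoke the hypothesis: $H \cap x^{-1} H^\prime x = \{1\}$ for all $x \in G$ is equivalent (after conjugating) to $H^\prime \cap x H x^{-1} = \{1\}$ for all $x \in G$, so every stabilizer $H^\prime \cap x_j H x_j^{-1}$ appearing in the sum is trivial. Therefore every summand $H^2(\{1\}, \bZ) = 0$, giving $H^2(H^\prime, \bZ[G/H]) = 0$. (Alternatively, one can note that when $H = \{1\}$, $\bZ[G/H] = \bZ[G]$ is a free $\bZ[H^\prime]$-module so the higher cohomology vanishes directly.)

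For the final clause, I would feed this vanishing into the bottom exact row of the commutative diagram already displayed in the proof of Theorem \ref{thmain}: the segment
\begin{align*}
H^2(H^\prime,\bZ[G/H])\to H^2(H^\prime,J_{G/H})\xrightarrow{\delta^\prime} H^3(H^\prime,\bZ)
\end{align*}
is exact, and since the left-hand term is $0$, the connecting map $\delta^\prime$ is injective. This is exactly what is needed so that, in the diagram chase, $\mathrm{Ker}(\mathrm{res}_3)$ maps isomorphically onto $\mathrm{Ker}(\mathrm{res}_4)$, reducing the computation of kernels in $H^2(G,J_{G/H})$ to the already-computed kernels in $M(G) \simeq H^3(G,\bZ)$ from Lemma \ref{lem2.5}.

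I do not expect a genuine obstacle here; the only point requiring a little care is correctly translating the orbit-stabilizer bookkeeping — making sure the conjugation conventions ($x^{-1}H^\prime x$ versus $x H x^{-1}$) line up so that the hypothesis really does kill every double-coset summand, and handling the degenerate case $H = \{1\}$ separately (or simply absorbing it, since then every $H^\prime \cap x_j H x_j^{-1}$ is trivially $\{1\}$ anyway). Everything else is a direct application of Shapiro's lemma plus exactness.
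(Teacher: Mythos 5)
Your proof is correct and rests on the same underlying observation as the paper's, namely that the hypothesis $H\cap x^{-1}H^\prime x=\{1\}$ forces every $H^\prime$-stabilizer in the double-coset decomposition of $\bZ[G/H]$ to be trivial; your conjugation bookkeeping and the application of Shapiro's lemma are both right, and the final exactness argument for the injectivity of $\delta^\prime$ matches the paper. The only difference is cosmetic: you conclude uniformly that $\bZ[G/H]$ is a free $\bZ[H^\prime]$-module (so all higher cohomology vanishes for any $H^\prime$ at once), whereas the paper argues case by case, using $H^2(H^\prime,-)\simeq\widehat H^0(H^\prime,-)$ and a fixed-point count when $H^\prime\simeq C_p$ and Shapiro's lemma on the single regular orbit when $H^\prime\simeq (C_p)^2$; your version is slightly cleaner and covers $H^\prime=G$ without special pleading.
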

\begin{proof}
If $H\cap x^{-1}H^\prime x=\{1\}$ 
for any $x\in G$, then the action of $H^\prime$ on $\bZ[G/H]$ is faithful 
because it corresponds to the double coset $H^\prime\backslash G/H$ 
and we have the formula $|H^\prime x H|=|H^\prime||H|/|x^{-1}H^\prime x\cap H|=|H^\prime||H|$ 
for $x\in G$.\\
(i) if $H^\prime\simeq C_p$, then $H^2(H^\prime,\bZ[G/H])\simeq 
\widehat{H}^0(H^\prime,\bZ[G/H])=0$ because $\bZ[G/H]$ is a permutation 
$H^\prime$-lattice with no fixed point. 
Note that $\widehat{H}^0(C_p,P)=P^{C_p}/N_{C_p}(P)\simeq (\bZ/p\bZ)^{\oplus s}$ 
where $s$ is the number of fixed elements of $P$ under the action of $C_p$ 
where $P$ is a permutation $C_p$-lattice.\\ 
(ii) if $H^\prime\simeq (C_p)^2$, then 
$H^2(H^\prime,\bZ[G/H])\simeq H^2(H^\prime,\bZ[H^\prime])\simeq H^2(1,\bZ)=0$ 
because the length of the orbit of the identity element $1$ is $|H^\prime\cdot 1\cdot H|
=|H^\prime||H|=p^3$ and the last isomorphism follows from Shapiro's lemma 
(see e.g. Brown \cite[Proposition 6.2, page 73]{Bro82}, Neukirch, Schmidt and Wingberg \cite[Proposition 1.6.3, page 59]{NSW00}). 
\end{proof}
%
\noindent 
(1) $H=\{1\}$. 
It follows from Lemma \ref{lem2.9} that  $H^2(H^\prime,\bZ[G/H])=0$ and hence 
$\delta^\prime: H^2(H^\prime,J_{G/H})\to H^3(H^\prime,\bZ)$ is injective. 
Hence we get ${\rm Ker}({\rm res}_3)\xrightarrow[\sim]{\delta} {\rm Ker}({\rm res}_4)$ 
and ${\rm Ker}({\rm res}_4)$ can be obtained as in Lemma \ref{lem2.5} 
via the isomorphisms $H^2(G,\bQ/\bZ)\xrightarrow[\sim]{} H^3(G,\bZ)$ 
and $H^2(H^\prime,\bQ/\bZ)\xrightarrow[\sim]{} H^3(H^\prime,\bZ)$. 
Namely, we have 
\begin{align*}
H^2(G,J_G)\simeq H^2(G,\bQ/\bZ)=\langle\overline{f_1},\overline{f_2}\rangle\simeq (\bZ/p\bZ)^{\oplus 2}
\end{align*} 
where $\overline{f_1},\overline{f_2}$ are given as in Lemma \ref{lem2.3} 
and 
\begin{align*}
{\rm Ker}\{H^2(G,J_G)\xrightarrow{\rm res_3} H^2(H^\prime,J_G)\}\simeq 
\begin{cases}
\langle \overline{f_1},\overline{f_2}\rangle\simeq (\bZ/p\bZ)^{\oplus 2} 
& {\rm if}\quad 
H^\prime=\{1\}, Z(G)\simeq C_p, H_i\simeq C_p\ (0\leq i\leq p),\\
\langle \overline{f_2}\rangle\simeq\bZ/p\bZ 
& {\rm if}\quad H^\prime=H_{0}^\prime\simeq (C_p)^2,\\
\langle \overline{f_1}\,\overline{f_2}^{(-1)i^{-1}}\rangle\simeq \bZ/p\bZ 
& {\rm if}\quad H^\prime
=H_i^\prime\simeq (C_p)^2\ (1\leq i\leq p-1),\\
\langle \overline{f_1}\rangle\simeq\bZ/p\bZ 
& {\rm if}\quad H^\prime=H_{p}^\prime\simeq (C_p)^2,\\
0 & {\rm if}\quad H^\prime=G.
\end{cases}
\end{align*}
In particular, when $k$ is a global field, 
we have $H^2(G,J_G)=\Sha^2_\omega(G,J_G)\simeq H^2(G,\bQ/\bZ)=\langle\overline{f_1},\overline{f_2}\rangle\simeq (\bZ/p\bZ)^{\oplus 2}
\geq \Sha(T)^\vee$ where $\Sha(T)^\vee={\rm Hom}(\Sha(T),\bQ/\bZ)$ 
and\\ 
{\rm (I)} $\Sha(T)=0$ 
if and only if $A(T)\simeq (\bZ/p\bZ)^{\oplus 2}$ 
if and only if 
{\rm (i)} there exist $($ramified$)$ places $v_1$, $v_2$ of $k$ such that 
$(C_p)^2\simeq ((C_p)^2)^{(i)}\leq G_{v_1}$, 
$(C_p)^2\simeq ((C_p)^2)^{(j)}\leq G_{v_2}$ $(1\leq i<j\leq p+1)$ 
where there exist $p+1$ non-conjugate $(C_p)^2\simeq ((C_p)^2)^{(j)}\leq G$ $(1\leq j\leq p+1)$ 
or {\rm (ii)} there exists a $($ramified$)$ place $v$ of $k$ such that $G_v=G$;\\
{\rm (II)} $\Sha(T)\simeq \bZ/p\bZ$ 
if and only if $A(T)\simeq \bZ/p\bZ$ 
if and only if there exists a $($ramified$)$ place $v$ of $k$ such that 
$(C_p)^2\leq G_v$ and {\rm (I)} does not hold;\\
{\rm (III)} $\Sha(T)\simeq (\bZ/p\bZ)^{\oplus 2}$ 
if and only if $A(T)=0$ 
if and only if $G_v\leq C_p$ for any place $v$ of $k$.\\

\noindent 
(2) $H\simeq C_p$ with $H\neq Z(G)=\langle c\rangle$. 
We may assume that 
\begin{align*}
H=H_0=\langle a\rangle
\end{align*}
because $J_{G/H}\simeq J_{\varphi(G/H)}$ as $G$-lattices for any $\varphi\in {\rm Aut}(G)$ and 
$\varphi_1(H_i)=H_{i+1}$ $(0\leq i\leq p-2)$, $\varphi_1(H_{p-1})=H_0$, $\varphi_2(H_0)=H_p$, 
where $\varphi_1: G\to G, a\mapsto ab, b\mapsto b, c\mapsto c$, 
$\varphi_2:G\to G, a\mapsto b, b\mapsto a, c\mapsto c^{-1}\in {\rm Aut}(G)$.\\ 

Case (i). 
If $H\cap x^{-1}H^\prime x=\{1\}$ for any $x\in G$, i.e.  
\begin{align*}
H^\prime\neq H=H_0=\langle a\rangle\ {\rm and}\ H^\prime\neq H^\prime_0=\langle a,c\rangle
\end{align*} 
up to conjugacy, 
then  $H^2(H^\prime,\bZ[G/H])=0$ and 
$\delta^\prime: H^2(H^\prime,J_{G/H})\to H^3(H^\prime,\bZ)$ is injective 
by Lemma \ref{lem2.9}.  
Hence we get ${\rm Ker}({\rm res}_3)\xrightarrow[\sim]{\delta} {\rm Ker}({\rm res}_4)$ 
and ${\rm Ker}({\rm res}_4)$ can be obtained as in Lemma \ref{lem2.5} 
via the isomorphisms $H^2(G,\bQ/\bZ)\xrightarrow[\sim]{} H^3(G,\bZ)$ 
and $H^2(H^\prime,\bQ/\bZ)\xrightarrow[\sim]{} H^3(H^\prime,\bZ)$ 
as in the case (1). 
Hence we get 
\begin{align*}
&{\rm Ker}\{H^2(G,J_{G/H})\xrightarrow{\rm res_3} H^2(H^\prime,J_{G/H})\}\\
&\simeq
\begin{cases}
\langle \overline{f_1},\overline{f_2}\rangle\simeq (\bZ/p\bZ)^{\oplus 2} 
& {\rm if}\quad 
H^\prime=\{1\}, Z(G)=\langle c\rangle\simeq C_p, H_i=\langle ab^i\rangle\simeq C_p\ (1\leq i\leq p-1), H_p=\langle b\rangle\simeq C_p,\\
\langle \overline{f_1}\,\overline{f_2}^{(-1)i^{-1}}\rangle\simeq \bZ/p\bZ 
& {\rm if}\quad H^\prime
=H_i^\prime=\langle ab^i,c\rangle\simeq (C_p)^2\ (1\leq i\leq p-1),\\
\langle \overline{f_1}\rangle\simeq\bZ/p\bZ 
& {\rm if}\quad H^\prime=H_{p}^\prime=\langle b,c\rangle\simeq (C_p)^2,\\
0 
& {\rm if}\quad H^\prime=G=\langle a,b,c\rangle\simeq (C_p)^2\rtimes C_p.
\end{cases}
\end{align*}\\

Case (ii).  
If $H\cap x^{-1}H^\prime x\neq \{1\}$ for some $x\in G$, i.e.  
\begin{align*}
H^\prime =H=H_0=\langle a\rangle\ {\rm or}\ H^\prime=H^\prime_0=\langle a,c\rangle
\end{align*} 
up to conjugacy. 
In order to finish the proof, we should show that 
\begin{align*}
{\rm Ker}\{H^2(G,J_{G/H})\xrightarrow{\rm res_3} H^2(H^\prime,J_{G/H})\}\simeq 
\begin{cases}
\langle \overline{f_2}\rangle\simeq\bZ/p\bZ 
& {\rm if}\quad H^\prime=H=H_0=\langle a\rangle\simeq C_p,\\
0 
& {\rm if}\quad H^\prime=H_{0}^\prime=\langle a,c\rangle\simeq (C_p)^2. 
\end{cases}
\end{align*}
We will prove it 
for $H^\prime=H_0^\prime$ in Proposition \ref{prop2.11} 
and then for $H^\prime=H$ in Proposition \ref{prop2.15}.\\

From Case (i) and Case (ii), when $k$ is a global field, 
we have $H^2(G,J_{G/H})\simeq  H^2(G,\bQ/\bZ)=\langle\overline{f_1},\overline{f_2}\rangle\simeq  (\bZ/p\bZ)^{\oplus 2}\geq 
\Sha^2_\omega(G,J_{G/H})\simeq \langle\overline{f_2}\rangle\simeq \bZ/p\bZ\geq
\Sha(T)^\vee$ where $\Sha(T)^\vee={\rm Hom}(\Sha(T),\bQ/\bZ)$ 
and\\
{\rm (I)} $\Sha(T)=0$ 
if and only if $A(T)\simeq \bZ/p\bZ$ 
if and only if there exists a $($ramified$)$ place $v$ of $k$ 
such that $(C_p)^2\leq G_v$;\\
{\rm (II)} $\Sha(T)\simeq\bZ/p\bZ$ 
if and only if $A(T)=0$ 
if and only if $G_v\leq C_p$ for any place $v$ of $k$.\\ 
%

Let $e_{i,j}=b^ic^jH$ $(1\leq i,j\leq p)$ be a $\bZ$-basis of $\bZ[G/H]$ 
with the $G$-action 
\begin{align*}
a&: e_{i,j}\mapsto e_{i,j-i},\\
b&: e_{i,j}\mapsto e_{i+1,j},\\
c&: e_{i,j}\mapsto e_{i,j+1}
\end{align*}
where the subscripts of $e_{i,j}$ should be understood modulo $p$
with a complete set $\{1,\ldots,p\}$ of representatives 
(for example, we see the action of $a$ by using the rule  $ab^ic^j=b^iac^{-i}c^{j}=b^ic^{j-i}a$). 

We note that $J_{G/H}$ is generated by $e_{i,j}=b^ic^jH$ $(1\leq i,j\leq p)$ 
with the relation $\sum_{i=1}^{p}\sum_{j=1}^{p}e_{i,j}=0$. 
Hence a $\bZ$-basis of $J_{G/H}$ is given by $e_{i,j}$ $(1\leq i,j\leq p$ and $(i,j)\neq (p,p))$. 

\begin{lemma}\label{lem2.10}
Let $H=\langle a\rangle\simeq C_p$ $\lhd$ 
$H_0^\prime=\langle a,c\rangle\simeq (C_p)^2$ $\lhd$ 
$G=E_p(p^2)=\langle a,b,c\rangle\simeq (C_p)^2\rtimes C_p$. 
Then 
$H^1(H,J_{G/H})=0$, $H^1(H^\prime_0,J_{G/H})=0$. 
\end{lemma}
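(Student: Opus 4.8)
The plan is to run the long exact cohomology sequence of the defining short exact sequence $0\to\bZ\to\bZ[G/H]\to J_{G/H}\to 0$ of $G$-lattices (with $\bZ\hookrightarrow\bZ[G/H]$, $1\mapsto\sum_{xH\in G/H}xH$), restricted to $H^\prime=H=\langle a\rangle$ and to $H^\prime=H_0^\prime=\langle a,c\rangle$ --- precisely the two subgroups to which Lemma \ref{lem2.9} does not apply, since $H\cap x^{-1}H^\prime x\neq\{1\}$ for some $x\in G$. For each such $H^\prime$ the sequence reads
\begin{align*}
0=H^1(H^\prime,\bZ)\to H^1(H^\prime,\bZ[G/H])\to H^1(H^\prime,J_{G/H})\xrightarrow{\delta}H^2(H^\prime,\bZ)\to H^2(H^\prime,\bZ[G/H]),
\end{align*}
so it suffices to prove (i) $H^1(H^\prime,\bZ[G/H])=0$ and (ii) the map $H^2(H^\prime,\bZ)\to H^2(H^\prime,\bZ[G/H])$ induced by $\bZ\hookrightarrow\bZ[G/H]$ is injective.

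First I would record the permutation-module structure of $\bZ[G/H]$ over $H^\prime$ via the orbit decomposition (equivalently the Mackey double-coset formula). Using coset representatives $b^ic^jH$ $(0\le i,j\le p-1)$ with $a\colon b^ic^jH\mapsto b^ic^{j-i}H$, $b\colon b^ic^jH\mapsto b^{i+1}c^jH$, $c\colon b^ic^jH\mapsto b^ic^{j+1}H$, one checks that the $G$-conjugates of $H=\langle a\rangle$ are exactly the $p$ distinct subgroups $\langle ac^t\rangle$, $t\in\bZ/p$ (indeed $xax^{-1}=ac^{t}$ when $t$ is the exponent of $b$ in $x$). For $H^\prime=H$ the trivial coset $eH$ is $H$-fixed and the ``coefficient of $eH$'' map $\bZ[G/H]\to\bZ$ is an $H$-equivariant retraction of $\bZ\hookrightarrow\bZ[G/H]$; the remaining $H$-orbits on $G/H$ are $p-1$ further fixed points and $p-1$ free orbits, so $J_{G/H}|_H\cong\bZ^{\oplus(p-1)}\oplus\bZ[H]^{\oplus(p-1)}$ and hence $H^1(H,J_{G/H})=0$ at once, since $H^1(H,\bZ)=0=H^1(H,\bZ[H])$. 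For $H^\prime=H_0^\prime\simeq(C_p)^2$ there is no fixed coset: all $p$ orbits have length $p$, with point stabilizers running through the $p$ subgroups $\langle ac^t\rangle$, so $\bZ[G/H]|_{H_0^\prime}\cong\bigoplus_{t=0}^{p-1}\bZ[H_0^\prime/\langle ac^t\rangle]\cong\bigoplus_{t=0}^{p-1}{\rm Ind}_{\langle ac^t\rangle}^{H_0^\prime}\bZ$.

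By Shapiro's lemma this yields $H^1(H_0^\prime,\bZ[G/H])\cong\bigoplus_t H^1(\langle ac^t\rangle,\bZ)=0$ and $H^2(H_0^\prime,\bZ[G/H])\cong\bigoplus_t H^2(\langle ac^t\rangle,\bZ)$, under which the map induced by $\bZ\hookrightarrow\bZ[G/H]$ becomes the sum of the restriction maps ${\rm res}\colon H^2(H_0^\prime,\bZ)\to H^2(\langle ac^t\rangle,\bZ)$ --- because on each orbit summand $1\mapsto\sum_{xH}xH$ is the $H_0^\prime$-invariant ``norm'' generator, i.e.\ the unit $\bZ\to{\rm Ind}_{\langle ac^t\rangle}^{H_0^\prime}{\rm Res}_{\langle ac^t\rangle}\bZ$ of the induction adjunction. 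Via $H^2(-,\bZ)\simeq{\rm Hom}(-,\bQ/\bZ)$, a class in the kernel is a character of $H_0^\prime$ vanishing on every $\langle ac^t\rangle$; since $p\ge 3$ we may use the two distinct subgroups $\langle a\rangle$ and $\langle ac\rangle$, which generate $H_0^\prime$, so the kernel is $0$. Thus (ii) holds for $H_0^\prime$ as well and $H^1(H_0^\prime,J_{G/H})=0$.

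The coset bookkeeping and the conjugacy identity $xax^{-1}=ac^{t}$ are routine; the one point needing care is (ii) for $H_0^\prime$, i.e.\ identifying the map $H^2(H_0^\prime,\bZ)\to H^2(H_0^\prime,\bZ[G/H])$ with a sum of restriction maps. I would justify this by the standard fact that, for $N\le\Gamma$, the inclusion $\bZ\hookrightarrow\bZ[\Gamma/N]$ onto the $\Gamma$-invariant line induces, after Shapiro's isomorphism $H^*(\Gamma,\bZ[\Gamma/N])\cong H^*(N,\bZ)$, the restriction map (equivalently, it is the adjunction unit $\bZ\to{\rm Ind}_N^{\Gamma}{\rm Res}_N\bZ$). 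Everything else is a short diagram chase in the long exact sequence displayed above.
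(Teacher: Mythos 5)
Your proof is correct. For $H^1(H,J_{G/H})=0$ you and the paper argue identically: the $H$-fixed coset $eH$ splits off the trivial summand, so $J_{G/H}|_H$ is a permutation $H$-lattice (with $p-1$ fixed points and $p-1$ free orbits) and $H^1$ vanishes. For $H^1(H_0^\prime,J_{G/H})=0$ your route is genuinely different: the paper runs the inflation--restriction sequence for $H\lhd H_0^\prime$, reduces to $H^1(H_0^\prime/H,(J_{G/H})^H)$ using the already-established $H^1(H,J_{G/H})=0$, and then shows $(J_{G/H})^H$ is a permutation $\langle\overline{c}\rangle$-lattice; you instead stay with the long exact sequence of $0\to\bZ\to\bZ[G/H]\to J_{G/H}\to 0$ over $H_0^\prime$, use the orbit decomposition $\bZ[G/H]|_{H_0^\prime}\simeq\bigoplus_{t}\bZ[H_0^\prime/\langle ac^t\rangle]$ together with Shapiro's lemma, and identify $H^2(H_0^\prime,\bZ)\to H^2(H_0^\prime,\bZ[G/H])$ with $\bigoplus_t{\rm res}$ via the adjunction-unit description of $\bZ\hookrightarrow\bZ[\Gamma/N]$, finishing with the observation that a character of $(C_p)^2$ killing the two distinct subgroups $\langle a\rangle$ and $\langle ac\rangle$ is trivial. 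Both arguments rest on the same orbit computation (stabilizer of $b^iH$ in $H_0^\prime$ equal to $\langle ac^i\rangle$), but the paper's inf--res setup is reused verbatim in Propositions \ref{prop2.11} and \ref{prop2.14}, whereas your version avoids Hochschild--Serre entirely and, as a byproduct, establishes the injectivity of $H^2(H_0^\prime,\bZ)\to H^2(H_0^\prime,\bZ[G/H])$, which the paper only extracts later (in the exact sequence of Lemma \ref{lem2.12}~(6)). The one step you flag as needing care --- that the unit $\bZ\to{\rm Ind}_N^{\Gamma}{\rm Res}_N\bZ$ induces restriction after Shapiro --- is indeed the standard fact you cite, so there is no gap.
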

\begin{proof}
Because 
$\bZ[G/H]$ is a permutation $H$-lattice with $p$ fixed elements $e_{p,j}$ $(1\leq j\leq p)$, 
we find that $J_{G/H}$ is a permutation $H$-lattice with $p-1$ fixed elements $e_{p,j}$ $(1\leq j\leq p-1)$. 
This implies that $H^1(H,J_{G/H})=0$. 

By 
the inflation-restriction exact sequence
\begin{align*}
0\to H^1(H_0^\prime/H,(J_{G/H})^H)
\xrightarrow{\rm inf} H^1(H_0^\prime,J_{G/H})
\xrightarrow{\rm res} H^1(H,J_{G/H})^{H_0^\prime/H}
\xrightarrow{\rm tr} H^2(H_0^\prime/H,(J_{G/H})^H)
\xrightarrow{\rm inf} H^2(H_0^\prime,J_{G/H})
\end{align*}
(see Hochschild and Serre \cite{HS53}) and $H^1(H,J_{G/H})=0$ as above, we have 
\begin{align*}
0\to H^1(H_0^\prime/H,(J_{G/H})^H)\xrightarrow[\sim]{\rm inf} H^1(H_0^\prime,J_{G/H})\to 0.
\end{align*}
Because $\bZ[G/H]\simeq \bigoplus_{i=1}^{p}\bZ[H_0^\prime/\langle ac^{i}\rangle]$ 
as $H_0^\prime$-lattices, we see that 
$\bZ[G/H]^H\simeq\bigoplus_{i=1}^{p-1}\langle\sum_{j=1}^{p}e_{i,j}]\rangle_\bZ
\bigoplus_{j=1}^{p} \langle e_{p,j}\rangle_\bZ$ 
as $\bZ$-lattice with the action of $H_0^\prime/H=\langle \overline{c}\rangle$: $\overline{c}=cH: e_{i,j}\mapsto e_{i,j+1}$. 
Therefore we see that 
$\bZ[G/H]^H$ is a permutation $H_0^\prime/H$-lattice with $p-1$ fixed points 
and 
$(J_{G/H})^H$ is a permutation $H_0^\prime/H$-lattice with $p-2$ fixed points. 
This implies that $H^1(H_0^\prime/H,(J_{G/H})^H)=0\xrightarrow[\sim]{\rm inf} H^1(H_0^\prime,J_{G/H})$. 
Hence we get $H^1(H_0^\prime,J_{G/H})=0$.
\end{proof}

\begin{proposition}\label{prop2.11}
Let $H=\langle a\rangle\simeq C_p$ $\lhd$ 
$H_0^\prime=\langle a,c\rangle\simeq (C_p)^2$ $\lhd$ 
$G=E_p(p^2)=\langle a,b,c\rangle\simeq (C_p)^2\rtimes C_p$. 
We have 
\begin{align*}
{\rm Ker}\{H^2(G,J_{G/H})\xrightarrow{\rm res_3}H^2(H_0^\prime,J_{G/H})\}=0.
\end{align*}
\end{proposition}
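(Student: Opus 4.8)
\emph{Proof proposal (plan).} The plan is to compute the kernel by passing to the cyclic quotient $G/H_0^\prime$. Note that $H_0^\prime=\langle a,c\rangle$ is normal in $G$ (it has index $p$; concretely $b^{-1}ab=ac^{-1}\in H_0^\prime$ and $c\in Z(G)$), and $G/H_0^\prime$ is generated by the image of $b$, hence is isomorphic to $C_p$. The only nontrivial input needed is Lemma \ref{lem2.10}, which gives $H^1(H_0^\prime,J_{G/H})=0$.

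First I would apply the Hochschild--Serre spectral sequence $E_2^{s,t}=H^s(C_p,H^t(H_0^\prime,J_{G/H}))\Rightarrow H^{s+t}(G,J_{G/H})$. In total degree $2$ the vanishing of $H^1(H_0^\prime,J_{G/H})$ kills $E_2^{0,1}$ and $E_2^{1,1}$, so no nonzero differential enters the $(2,0)$-spot and $E_\infty^{1,1}=0$. Since $\mathrm{res}_3$ equals the composite of the edge map $H^2(G,J_{G/H})\twoheadrightarrow E_\infty^{0,2}$ with the inclusion $E_\infty^{0,2}\hookrightarrow H^2(H_0^\prime,J_{G/H})$, its kernel is the first filtration step $F^1H^2(G,J_{G/H})$, which here coincides with $F^2H^2(G,J_{G/H})=E_\infty^{2,0}=E_2^{2,0}$. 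Thus
\begin{align*}
{\rm Ker}\{H^2(G,J_{G/H})\xrightarrow{\rm res_3}H^2(H_0^\prime,J_{G/H})\}\simeq H^2\bigl(C_p,(J_{G/H})^{H_0^\prime}\bigr),
\end{align*}
and the problem reduces to showing that the right-hand side vanishes. (Equivalently, since $H^1(H_0^\prime,J_{G/H})=0$, inflation identifies $H^2(C_p,(J_{G/H})^{H_0^\prime})$ with this kernel.)

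Next I would pin down the $C_p$-module structure of $(J_{G/H})^{H_0^\prime}$. Taking $H_0^\prime$-invariants in $0\to\bZ\to\bZ[G/H]\to J_{G/H}\to 0$ and using $H^1(H_0^\prime,\bZ)=0$ gives an exact sequence of $C_p$-modules $0\to\bZ\to(\bZ[G/H])^{H_0^\prime}\to(J_{G/H})^{H_0^\prime}\to 0$. With the basis $e_{i,j}=b^ic^jH$ ($1\le i,j\le p$) of $\bZ[G/H]$ and the action $a\colon e_{i,j}\mapsto e_{i,j-i}$, $b\colon e_{i,j}\mapsto e_{i+1,j}$, $c\colon e_{i,j}\mapsto e_{i,j+1}$ (subscripts mod $p$), an element is $c$-fixed iff its coefficients depend only on $i$, and every such element is then automatically $a$-fixed; hence $(\bZ[G/H])^{H_0^\prime}=\bigoplus_{i=1}^{p}\bZ N_i$, where $N_i=\sum_{j=1}^{p}e_{i,j}$, and $\overline{b}$ cyclically permutes the $N_i$. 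Therefore $(\bZ[G/H])^{H_0^\prime}\simeq\bZ[C_p]$ as $C_p$-module, with the copy of $\bZ$ above identified with $\bZ\cdot\sum_{i=1}^{p}N_i$, i.e. $\bZ$ times the norm element $\sum_{t\in C_p}t$.

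Finally, from $0\to\bZ\to\bZ[C_p]\to(J_{G/H})^{H_0^\prime}\to 0$ together with $H^i(C_p,\bZ[C_p])=0$ for $i\ge 1$, the long exact cohomology sequence yields $H^2(C_p,(J_{G/H})^{H_0^\prime})\simeq H^3(C_p,\bZ)=0$. Combining with the displayed isomorphism, ${\rm Ker}\{H^2(G,J_{G/H})\xrightarrow{\rm res_3}H^2(H_0^\prime,J_{G/H})\}=0$, as claimed. I expect the only slightly delicate point to be the third step, namely keeping track of the $C_p=G/H_0^\prime$-action on $(\bZ[G/H])^{H_0^\prime}$ carefully enough to recognize $(J_{G/H})^{H_0^\prime}$ as $\bZ[C_p]$ modulo its norm element; the first and last steps are just the standard low-degree exact sequence and the (vanishing) cohomology of cyclic groups. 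One could instead argue directly with the explicit $2$-cocycles $f_1,f_2$ of Lemma \ref{lem2.3}, but the spectral-sequence route bypasses those computations.
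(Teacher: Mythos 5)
Your proposal is correct and follows essentially the same route as the paper: the paper invokes the seven-term Lyndon--Hochschild--Serre exact sequence for $H_0^\prime\lhd G$ with $G/H_0^\prime\simeq C_p$, uses Lemma \ref{lem2.10} to identify the kernel of ${\rm res}_3$ with $H^2(G/H_0^\prime,(J_{G/H})^{H_0^\prime})$, and then identifies $(J_{G/H})^{H_0^\prime}\simeq J_{C_p}$ with $\langle\overline{b}\rangle$ acting fixed-point-freely, exactly as you do. The only (immaterial) difference is at the last step, where the paper computes $H^2(C_p,J_{C_p})=\widehat{H}^0(C_p,J_{C_p})=0$ directly rather than via the long exact sequence and $H^3(C_p,\bZ)=0$.
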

\begin{proof}
For a normal subgroup $N\lhd G$ and $G$-module $M$, 
the Lyndon-Hochschild-Serre spectral sequence gives rise to 
the $7$-term 
exact sequence
\begin{align*}
0 &\to H^1(G/N,M^N)\xrightarrow{\rm inf} H^1(G,M)
\xrightarrow{\rm res} H^1(N,M)^{G/N} 
\xrightarrow{\rm tr} H^2(G/N,M^N) \\
&\xrightarrow{\rm inf} H^2(G,M)_1 
\xrightarrow{\rho} H^1(G/N,H^1(N,M))
\xrightarrow{\lambda} H^3(G/N,M^N)
\end{align*}
where $H^2(G,M)_1:={\rm Ker}\{H^2(G,M)
\xrightarrow{\rm res} H^2(N,M)\}$ 
(see Dekimpe, Hartl and Wauters \cite{DHW12}, see also Huebschmann \cite{Hue81a}, \cite{Hue81b}). 
We apply this to the case $N=H_0^\prime$ and $M=J_{G/H}$. 
Then we get 
\begin{align*}
H^1(H_0^\prime,J_{G/H})^{G/H_0^\prime}=0 
\xrightarrow{\rm tr} H^2(G/H_0^\prime,(J_{G/H})^{H_0^\prime})
\xrightarrow[\sim]{\rm inf} H^2(G,J_{G/H})_1 
\xrightarrow{\rho} H^1(G/H_0^\prime,H^1(H_0^\prime,J_{G/H}))=0
\end{align*}
where $H^2(G,J_{G/H})_1={\rm Ker}\{H^2(G,J_{G/H})\xrightarrow{\rm res_3}H^2(H_0^\prime,J_{G/H})\}$
because $H^1(H_0^\prime,J_{G/H})=0$ by Lemma \ref{lem2.10}. 
By $\bZ[G/H]^{H_0^\prime}\simeq\bZ[G/H_0^\prime]\simeq \bZ[\langle\overline{b}\rangle]\simeq \bZ[C_p]$ 
and $(J_{G/H})^{H_0^\prime}\simeq J_{G/H_0^\prime}\simeq J_{\langle\overline{b}\rangle}\simeq J_{C_p}$, 
we have $H^2(G/H_0^\prime,(J_{G/H})^{H_0^\prime})$ $=$ $H^2(C_p,J_{C_p})=\widehat{H}^0(C_p,J_{C_p})=(J_{C_p})^{C_p}/N_{C_p}(J_{C_p})=0$ 
because $C_p\simeq \langle\overline{b}\rangle$ acts on $J_{C_p}$ with no fixed point. 
This implies that $H^2(G,J_{G/H})_1=0$. 
\end{proof}

As in the begining of the proof, from the definition 
$0\to \bZ\to \bZ[G/H]\to J_{G/H}\to 0$ where 
$J_{G/H}\simeq \widehat{T}={\rm Hom}(T,\bG_m)$ and $T=R^{(1)}_{K/k}(\bG_m)$, we already have  
\begin{align*}
&H^1(G,\bZ[G/H])\to H^1(G,J_{G/H})\to H^2(G,\bZ)\simeq G^{ab}\simeq (\bZ/p\bZ)^{\oplus 2}\to
H^2(G,\bZ[G/H])\simeq H^{ab}\simeq \bZ/p\bZ\\
&\xrightarrow{0} H^2(G,J_{G/H})
\simeq (\bZ/p\bZ)^{\oplus 2}\xrightarrow[\sim]{\delta} H^3(G,\bZ)\simeq M(G)\simeq(\bZ/p\bZ)^{\oplus 2}\to 
H^3(G,\bZ[G/H])\simeq M(H)=0
\end{align*} 
where $\delta$ is the connecting homomorphism. 

\begin{lemma}\label{lem2.12}
Let $H=\langle a\rangle\simeq C_p$ $\lhd$ 
$H_0^\prime=\langle a,c\rangle\simeq (C_p)^2$ $\lhd$ 
$G=E_p(p^2)=\langle a,b,c\rangle\simeq (C_p)^2\rtimes C_p$. 
Then we have\\
{\rm (1)} $H^1(G,\bZ[G/H])=0$, $H^1(G,J_{G/H})\simeq \bZ/p\bZ$;\\ 
{\rm (2)} $H^2(H^\prime_0,\bZ)\simeq (\bZ/p\bZ)^{\oplus 2}$, $H^2(H,\bZ)\simeq \bZ/p\bZ$;\\
{\rm (3)} $H^3(H^\prime_0,\bZ)\simeq \bZ/p\bZ$, $H^3(H,\bZ)=0$;\\
{\rm (4)} $H^2(H^\prime_0,\bZ[G/H])\simeq (\bZ/p\bZ)^{\oplus p}$,  $H^2(H,\bZ[G/H])\simeq (\bZ/p\bZ)^{\oplus p}$;\\
{\rm (5)} $H^3(H^\prime_0,\bZ[G/H])=0$, $H^3(H,\bZ[G/H])=0$;\\
{\rm (6)} $H^2(H^\prime_0,J_{G/H})\simeq (\bZ/\bZ)^{\oplus p-1}$,  $H^2(H,J_{G/H})=(\bZ/p\bZ)^{\oplus p-1}$.
\end{lemma}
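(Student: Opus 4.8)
The plan is to run everything off the defining sequence $0\to\bZ\to\bZ[G/H]\to J_{G/H}\to 0$ (note $[G:H]=p^2$) together with the explicit $\bZ$-basis $e_{i,j}=b^ic^jH$ $(1\le i,j\le p)$ of $\bZ[G/H]$, reducing each cohomology group in sight to that of $C_p$ or $(C_p)^2$ with coefficients in $\bZ$ or in a permutation module via Shapiro's lemma. Parts {\rm (2)} and {\rm (3)} are then immediate: $H^2(C_p,\bZ)\simeq H^1(C_p,\bQ/\bZ)\simeq\bZ/p\bZ$, $H^3(C_p,\bZ)\simeq H^1(C_p,\bZ)=0$, while $H^2(H_0^\prime,\bZ)\simeq{\rm Hom}(H_0^\prime,\bQ/\bZ)\simeq(\bZ/p\bZ)^{\oplus 2}$ and $H^3(H_0^\prime,\bZ)\simeq M(H_0^\prime)\simeq\bZ/p\bZ$, since $H_0^\prime\simeq(C_p)^2$.

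For {\rm (4)} and {\rm (5)} I would first record the two decompositions of $\bZ[G/H]$ as a $\bZ[H]$- and as a $\bZ[H_0^\prime]$-module. Reading off the actions $a\colon e_{i,j}\mapsto e_{i,j-i}$ and $c\colon e_{i,j}\mapsto e_{i,j+1}$: over $H=\langle a\rangle$ the column $i=p$ is fixed pointwise and each of the remaining $p-1$ columns is a single free $H$-orbit, so $\bZ[G/H]|_H\simeq\bZ^{\oplus p}\oplus\bZ[C_p]^{\oplus(p-1)}$; over $H_0^\prime=\langle a,c\rangle$ the $H_0^\prime$-orbit of the coset $b^sc^tH$ has point stabiliser $\langle ac^s\rangle\simeq C_p$, whence $\bZ[G/H]|_{H_0^\prime}\simeq\bigoplus_{s=1}^{p}\bZ[H_0^\prime/\langle ac^s\rangle]$. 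Parts {\rm (4)} and {\rm (5)} then drop out by Shapiro's lemma, using $H^n(C_p,\bZ[C_p])=0$ for $n\ge 1$ and $H^3(C_p,\bZ)=0$.

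Part {\rm (1)} comes from the long exact sequence of $0\to\bZ\to\bZ[G/H]\to J_{G/H}\to 0$ over $G$: by Shapiro $H^1(G,\bZ[G/H])\simeq H^1(H,\bZ)=0$, so $0\to H^1(G,J_{G/H})\to H^2(G,\bZ)\simeq G^{ab}\xrightarrow{{\rm res}}H^2(G,\bZ[G/H])\simeq H^{ab}$, and the restriction $G^{ab}\to H^{ab}$ is surjective by Proposition~\ref{prop2.1}{\rm (3)} applied with $H^\prime=\langle b,c\rangle$ (normal, abelian quotient, meeting $H=\langle a\rangle$ trivially). Hence $H^1(G,J_{G/H})$ has order $p$, and being a subgroup of $G^{ab}\simeq(\bZ/p\bZ)^{\oplus 2}$ it is $\simeq\bZ/p\bZ$.

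Finally, for {\rm (6)} I would again take cohomology of $0\to\bZ\to\bZ[G/H]\to J_{G/H}\to 0$, now over $H$ and over $H_0^\prime$. By Lemma~\ref{lem2.10}, $H^1(H^\prime,J_{G/H})=0$ for $H^\prime\in\{H,H_0^\prime\}$, so the sequence truncates to $0\to H^2(H^\prime,\bZ)\to H^2(H^\prime,\bZ[G/H])\to H^2(H^\prime,J_{G/H})\to H^3(H^\prime,\bZ)\to H^3(H^\prime,\bZ[G/H])=0$, and feeding in {\rm (2)}--{\rm (5)} shows $H^2(H^\prime,J_{G/H})$ has order $p^{p-1}$ in both cases. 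For $H^\prime=H\simeq C_p$ this group is annihilated by $|C_p|=p$, hence $\simeq(\bZ/p\bZ)^{\oplus(p-1)}$. The remaining point — and the one I expect to be the main obstacle — is to upgrade ``order $p^{p-1}$'' to ``elementary abelian'' for $H^2(H_0^\prime,J_{G/H})$, whose exponent is a priori only known to divide $p^2$. Here I would use the inflation--restriction (seven term) sequences for the normal subgroups $\langle a\rangle$ and $\langle ac\rangle$ of $H_0^\prime$: since $H^1(\langle a\rangle,J_{G/H})=0$ (and likewise for $\langle ac\rangle$, which is a $G$-conjugate of $\langle a\rangle$ inside $H_0^\prime$, $H_0^\prime$ being normal in $G$), inflation identifies $\ker\{H^2(H_0^\prime,J_{G/H})\xrightarrow{{\rm res}}H^2(\langle a\rangle,J_{G/H})\}$ with $H^2(H_0^\prime/\langle a\rangle,(J_{G/H})^{\langle a\rangle})$; by the computation in the proof of Lemma~\ref{lem2.10}, $(J_{G/H})^{\langle a\rangle}$ is a permutation $C_p$-lattice with $p-2$ fixed points, so this kernel is $\simeq(\bZ/p\bZ)^{\oplus(p-2)}$ and the image of restriction to $\langle a\rangle$ is $\simeq\bZ/p\bZ$, and the same holds for $\langle ac\rangle$. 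Any element of $\ker({\rm res}_{\langle ac\rangle})$ not lying in $\ker({\rm res}_{\langle a\rangle})$ then has order $p$ and generates a complement to $\ker({\rm res}_{\langle a\rangle})$, splitting the extension $0\to(\bZ/p\bZ)^{\oplus(p-2)}\to H^2(H_0^\prime,J_{G/H})\to\bZ/p\bZ\to 0$ and giving $H^2(H_0^\prime,J_{G/H})\simeq(\bZ/p\bZ)^{\oplus(p-1)}$. The crux is thus the verification that $\ker({\rm res}_{\langle a\rangle})\neq\ker({\rm res}_{\langle ac\rangle})$ inside $H^2(H_0^\prime,J_{G/H})$ --- equivalently that $\Sha^2_\omega(H_0^\prime,J_{G/H})=0$, noting that $\langle c\rangle$ contributes nothing since $\langle c\rangle$ acts freely on $G/H$ and hence $H^2(\langle c\rangle,J_{G/H})=0$ --- which I would settle either by a direct comparison of the two inflation images, or, bypassing the spectral-sequence bookkeeping entirely, by computing $H^2(H_0^\prime,J_{G/H})$ outright from the standard free $\bZ[(C_p)^2]$-resolution of $\bZ$ applied to $\bZ[G/H]|_{H_0^\prime}\simeq\bigoplus_{s=1}^{p}\bZ[H_0^\prime/\langle ac^s\rangle]$, which pins down the isomorphism type unambiguously.
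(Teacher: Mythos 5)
Your argument follows the paper's proof almost step for step: parts (2)--(5) via the same orbit decompositions $\bZ[G/H]|_H\simeq\bZ^{\oplus p}\oplus\bZ[C_p]^{\oplus(p-1)}$ and $\bZ[G/H]|_{H_0^\prime}\simeq\bigoplus_{s=1}^{p}\bZ[H_0^\prime/\langle ac^s\rangle]$ plus Shapiro, part (1) via the same long exact sequence (the paper gets the surjectivity of $G^{ab}\to H^{ab}$ from Proposition \ref{prop2.2}(2) rather than directly from Proposition \ref{prop2.1}(3), which amounts to the same thing), and the $H$-half of (6) by an order count plus annihilation by $|H|=p$ (the paper instead computes $\widehat{H}^0(H,J_{G/H})$ directly from the fact that $J_{G/H}$ is a permutation $H$-lattice with $p-1$ fixed points; both are fine).

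The one place you diverge is the $H_0^\prime$-half of (6), and there your instinct is sound but your proof is not finished. You are right that the long exact sequence only yields $|H^2(H_0^\prime,J_{G/H})|=p^{p-1}$ together with an extension $0\to(\bZ/p\bZ)^{\oplus(p-2)}\to H^2(H_0^\prime,J_{G/H})\to\bZ/p\bZ\to 0$, and that the exponent is a priori only bounded by $|H_0^\prime|=p^2$; the paper reads the isomorphism type straight off the sequence without comment. However, your proposed repair hinges on the unverified claim that $\ker({\rm res}_{\langle a\rangle})\neq\ker({\rm res}_{\langle ac\rangle})$, which you explicitly defer to ``a direct comparison'' or ``a free resolution computation'' --- so as written this is a gap, not a proof. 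A cleaner way to close it, using only results the paper proves independently of Lemma \ref{lem2.12}, is to split $\delta^\prime$ by restriction from $G$: by Lemma \ref{lem2.5} the map ${\rm res}_4:H^3(G,\bZ)\to H^3(H_0^\prime,\bZ)$ has kernel $\langle\overline{f_2}\rangle$ of index $p$, hence is surjective onto $H^3(H_0^\prime,\bZ)\simeq\bZ/p\bZ$; choosing $x\in H^2(G,J_{G/H})\simeq(\bZ/p\bZ)^{\oplus 2}$ with $\delta(x)\notin\langle\overline{f_2}\rangle$, the element ${\rm res}_3(x)$ has order $p$ and satisfies $\delta^\prime({\rm res}_3(x))={\rm res}_4(\delta(x))\neq 0$, so the extension splits and the group is elementary abelian. (Note also that in Propositions \ref{prop2.14} and \ref{prop2.15} only the order $p^{p-1}$ of this group is actually used, so the subtlety you found does not propagate to the main theorem.)
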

\begin{proof}
(1) By Shapiro's lemma 
(see e.g. Brown \cite[Proposition 6.2, page 73]{Bro82}, Neukirch, Schmidt and Wingberg \cite[Proposition 1.6.3, page 59]{NSW00}), 
we have $H^1(G,\bZ[G/H])\simeq H^1(H,\bZ)\simeq {\rm Hom}(H,\bZ)=0$. 
Hence it follows from Proposition \ref{prop2.2} (2) and the exact sequence 
\begin{align*}
H^1(G,\bZ[G/H])=0\to H^1(G,J_{G/H})\to H^2(G,\bZ)\simeq G^{ab}\simeq (\bZ/p\bZ)^{\oplus 2}\to
H^2(G,\bZ[G/H])\simeq H^{ab}\simeq \bZ/p\bZ\to 0
\end{align*}
that $H^1(G,J_{G/H})\simeq\bZ/p\bZ$.\\
(2) We have $H^2(H^\prime_0,\bZ)\simeq H^1(H^\prime_0,\bQ/\bZ)\simeq {\rm Hom}(H^\prime_0,\bQ/\bZ)
\simeq (H^\prime_0)^{ab}\simeq (\bZ/p\bZ)^{\oplus 2}$, 
$H^2(H,\bZ)\simeq H^1(H,\bQ/\bZ)\simeq {\rm hom}(H,\bQ/\bZ)\simeq H^{ab}\simeq \bZ/p\bZ$.\\
(3) $H^3(H^\prime_0,\bZ)\simeq M(H^\prime_0)\simeq \bZ/p\bZ$, $H^3(H,\bZ)\simeq M(H)=0$.\\
(4) 
We have $H^2(H,\bZ[G/H])\simeq \widehat{H}^0(H,\bZ[G/H])\simeq 
\bZ[G/H]^H/N_H(\bZ[G/H])\simeq (\bZ/|H|\,\bZ)^{\oplus p}\simeq 
(\bZ/p\bZ)^{\oplus p}$ 
because $\bZ[G/H]$ is a permutation $H$-lattice 
with $p$ fixed elements $e_{p,j}$ $(1\leq j\leq p)$.\\ 
We also have $H^2(H^\prime_0,\bZ[G/H])\simeq 
\bigoplus_{i=1}^{p} H^2(H^\prime_0,\bZ[H^\prime_0/\langle ac^{i}\rangle])
\simeq \bigoplus_{i=1}^{p}H^2(\langle ac^{i}\rangle,\bZ)
\simeq H^2(C_p,\bZ)^{\oplus p}\simeq \widehat{H}^0(C_p,\bZ)^{\oplus p}$ 
$\simeq$ $(\bZ/p\bZ)^{\oplus p}$ because
${\rm Orb}_{H_0^\prime}(xH)\simeq H_0^\prime/{\rm Stab}_{H_0^\prime}(xH)$ 
and ${\rm Stab}_{H_0^\prime}(xH)=xHx^{-1}$ for any $x\in G$  
and hence 
$\bZ[G/H]\simeq \bigoplus_{i=1}^{p}\bZ[H_0^\prime/\langle ac^{i}\rangle]$ 
as $H_0^\prime$-lattices.\\ 
(5) 
Because 
$\bZ[G/H]\simeq \bigoplus_{i=1}^p\bZ[H_0^\prime/\langle ac^i\rangle]$ as $H_0^\prime$-lattice, 
we have 
$H^3(H_0^\prime,\bZ[G/H])\simeq \bigoplus_{i=1}^p H^3(H_0^\prime,\bZ[H_0^\prime/\langle ac^i\rangle]) 
\simeq \bigoplus_{i=1}^p H^3(\langle ac^i\rangle,\bZ)\simeq \bigoplus_{i=1}^p H^1(\langle ac^i\rangle,\bZ)\simeq 
\bigoplus_{i=1}^p H^1(C_p,\bZ)=0$. 
We also see $H^3(H,\bZ[G/H])\simeq H^1(H,\bZ[G/H])$ $=0$ because $H\simeq C_p$.\\ 
(6) 
By (2), (3), (4), (5), we get the exact sequence 
\begin{align*}
&H^1(H_0^\prime,J_{G/H})=0\to H^2(H_0^\prime,\bZ)\simeq (H_0^\prime)^{ab}\simeq (\bZ/p\bZ)^{\oplus 2}\to
H^2(H_0^\prime,\bZ[G/H])\simeq (\bZ/p\bZ)^{\oplus p}\\
&\xrightarrow{} H^2(H_0^\prime,J_{G/H})\xrightarrow{\delta} H^3(H_0^\prime,\bZ)\simeq M(H_0^\prime)\simeq \bZ/p\bZ\to 
H^3(H_0^\prime,\bZ[G/H])=0. 
\end{align*} 
This implies that $H^2(H_0^\prime,J_{G/H})\simeq (\bZ/p\bZ)^{\oplus p-1}$. 
Because 
$\bZ[G/H]$ is a permutation $H$-lattice with $p$ fixed elements $e_{p,j}$ $(1\leq j\leq p)$, 
we find that $J_{G/H}$ is a permutation $H$-lattice with $p-1$ fixed elements $e_{p,j}$ $(1\leq j\leq p-1)$. 
This implies that 
$H^2(H,J_{G/H})\simeq \widehat{H}^0(H,J_{G/H})
\simeq (J_{G/H})^H/N_H(J_{G/H})\simeq (\bZ/|H|\,\bZ)^{\oplus p-1}\simeq (\bZ/p\bZ)^{\oplus p-1}$. 
\end{proof}

By Lemma \ref{lem2.12}, 
because res and $\delta$ are functrial with ${\rm res}\circ \delta=\delta\circ {\rm res}$ 
(see e.g. Neukirch, Schmidt and Wingberg \cite[Proposition 1.5.2]{NSW00}), 
we have the following commutative diagram with exact horizontal lines:
\begin{align*}
\xymatrix@=22pt{
0\ar[r] 
&
H^1(G,J_{G/H})\atop \simeq \bZ/p\bZ \ar[r]  \ar[d]_{\rm res_0} & 
H^2(G,\bZ)\atop \simeq (\bZ/p\bZ)^{\oplus 2} \ar[r] \ar[d]_{\rm res_1} &
H^2(G,\bZ[G/H])\atop \simeq \bZ/p\bZ \ar[r]^-{0} \ar[d]_{\rm res_2} & 
H^2(G,J_{G/H})\atop \simeq (\bZ/p\bZ)^{\oplus 2} \ar[r]^-{\delta}_-{\sim} \ar[d]_{\rm res_3} & 
H^3(G,\bZ)\atop \simeq (\bZ/p\bZ)^{\oplus 2} \ar[r] \ar[d]_{\rm res_4} &
H^3(G,\bZ[G/H])\atop =0 \ar[d]_{\rm res_5}
\\
&
H^1(H_0^\prime,J_{G/H})\atop =0 \ar[r] \ar[d]_{\rm res_0^\prime} & 
H^2(H_0^\prime,\bZ)\atop \simeq (\bZ/p\bZ)^{\oplus 2} \ar[r]^-{\varphi^\prime} \ar[d]_{\rm res_1^\prime}&
H^2(H_0^\prime,\bZ[G/H])\atop \simeq (\bZ/p\bZ)^{\oplus p} \ar[r]^-{\psi^\prime} \ar[d]_{\rm res_2^\prime}& 
H^2(H_0^\prime,J_{G/H})\atop\simeq (\bZ/p\bZ)^{\oplus p-1} \ar[r]^-{\delta^\prime} \ar[d]_{\rm res_3^\prime}& 
H^3(H_0^\prime,\bZ)\atop \simeq \bZ/p\bZ \ar[r] \ar[d]_{\rm res_4^\prime}&
H^3(H_0^\prime,\bZ[G/H])\atop =0\ar[d]_{\rm res_5^\prime}
\\
&
H^1(H,J_{G/H})\atop =0 \ar[r] & 
H^2(H,\bZ)\atop \simeq \bZ/p\bZ \ar[r]^-{\varphi^{\prime\prime}} &
H^2(H,\bZ[G/H])\atop \simeq (\bZ/p\bZ)^{\oplus p}  \ar[r]^-{\psi^{\prime\prime}} & 
H^2(H,J_{G/H})\atop \simeq (\bZ/p\bZ)^{\oplus p-1} \ar[r]^-{\delta^{\prime\prime}} & 
H^3(H,\bZ)\atop =0 \ar[r] &
H^3(H,\bZ[G/H])\atop =0
}
\end{align*}
where 
$H=\langle a\rangle\simeq C_p$ $\lhd$ 
$H_0^\prime=\langle a,c\rangle\simeq (C_p)^2$ $\lhd$ 
$G=E_p(p^2)=\langle a,b,c\rangle\simeq (C_p)^2\rtimes C_p$.

\begin{lemma}\label{lem2.13}
Let $H=\langle a\rangle\simeq C_p$ $\lhd$ 
$H_0^\prime=\langle a,c\rangle\simeq (C_p)^2$ $\lhd$ 
$G=E_p(p^2)=\langle a,b,c\rangle\simeq (C_p)^2\rtimes C_p$. 
Then we have\\ 
{\rm (1)} ${\rm res}_1^\prime: H^2(H_0^\prime,\bZ)\to H^2(H,\bZ)$ is surjective;\\
{\rm (2)} 
$\varphi^{\prime\prime}: H^2(H,\bZ)\to H^2(H,\bZ[G/H])$ is injective;\\
{\rm (3)} ${\rm res}_2^\prime: H^2(H_0^\prime,\bZ[G/H])\to H^2(H,\bZ[G/H])$, $f\mapsto f_p$ 
where 
$H^2(H_0^\prime,\bZ[G/H]) \simeq \bigoplus_{i=1}^p H^2(H_0^\prime,\bZ[H_0^\prime/\langle ac^i\rangle])$, 
$f\mapsto (f_1,\ldots,f_p)$ 
and hence ${\rm Ker}({\rm res}_2^\prime)=\bigoplus_{i=1}^{p-1}H^2(H_0^\prime,\bZ[H_0^\prime/\langle ac^i\rangle])$, 
i.e. $H^2(H_0^\prime,\bZ[H_0^\prime/\langle a\rangle])=0$;\\
{\rm (4)} ${\rm Im}(\varphi^{\prime\prime})={\rm Im}({\rm res}_2^\prime)$;\\
{\rm (5)} ${\rm Ker}(\delta^\prime)\subset {\rm Ker}({\rm res}_3^\prime)$. 
\end{lemma}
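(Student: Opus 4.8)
The plan is to establish (1)--(5) in that order, working throughout inside the commutative diagram with exact rows displayed just above (whose three rows are the long exact sequences of $0\to\bZ\to\bZ[G/H]\to J_{G/H}\to 0$ for $G$, for $H_0^\prime$ and for $H$) and using the cohomology computations of Lemma \ref{lem2.12}.

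Assertions (1) and (2) are formal. For (1) I would use that the isomorphism $H^2(-,\bZ)\simeq H^1(-,\bQ/\bZ)={\rm Hom}(-,\bQ/\bZ)$ is natural with respect to restriction, so ${\rm res}_1^\prime$ becomes the restriction of characters ${\rm Hom}(H_0^\prime,\bQ/\bZ)\to{\rm Hom}(H,\bQ/\bZ)$; since $H_0^\prime=\langle a\rangle\times\langle c\rangle$ has $H=\langle a\rangle$ as a direct factor (equivalently, since $\bQ/\bZ$ is divisible) this map is surjective. For (2) I would read off from the bottom row the exact segment $H^1(H,J_{G/H})\to H^2(H,\bZ)\xrightarrow{\varphi^{\prime\prime}}H^2(H,\bZ[G/H])$ and invoke $H^1(H,J_{G/H})=0$ from Lemma \ref{lem2.10}, which gives injectivity of $\varphi^{\prime\prime}$.

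The substance is in (3) and (4). For (3) I would begin from the double-coset decomposition of $\bZ[G/H]$ as an $H_0^\prime$-lattice: the elements $1,b,\ldots,b^{p-1}$ represent $H_0^\prime\backslash G/H$ with $H_0^\prime\cap b^iH(b^i)^{-1}=\langle ac^i\rangle$ (using $b^iab^{-i}=ac^i$), hence $\bZ[G/H]\simeq\bigoplus_{i=1}^p\bZ[H_0^\prime/\langle ac^i\rangle]$, where $i=p$ corresponds to $\langle a\rangle=H$ and its $H_0^\prime$-orbit in $G/H$ is $\{e_{p,1},\ldots,e_{p,p}\}$; this gives the asserted splitting of $H^2(H_0^\prime,\bZ[G/H])$. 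Restricting each summand to $H$: because $H_0^\prime$ is abelian and normalises $\langle ac^i\rangle$, the $H$-set $H_0^\prime/\langle ac^i\rangle$ has no fixed point when $\langle ac^i\rangle\neq H$, so $\bZ[H_0^\prime/\langle ac^i\rangle]$ is a free $\bZ[H]$-lattice and $H^2(H,\bZ[H_0^\prime/\langle ac^i\rangle])=0$ for $1\leq i\leq p-1$; for $i=p$ the $H$-action on $H_0^\prime/H$ is trivial, so this summand restricted to $H$ is $\langle e_{p,1},\ldots,e_{p,p}\rangle_\bZ\simeq\bZ^{\oplus p}$, and by Shapiro's lemma together with the Mackey double-coset formula the restriction $H^2(H_0^\prime,\bZ[H_0^\prime/H])\simeq H^2(H,\bZ)\to H^2(H,\bZ^{\oplus p})$ is the diagonal map $x\mapsto(x,\ldots,x)$ (the conjugations by representatives of $H_0^\prime/H$ act trivially on $H$ since $H_0^\prime$ is abelian), in particular injective. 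This yields the description of ${\rm res}_2^\prime$ and of ${\rm Ker}({\rm res}_2^\prime)$. For (4) I would compute ${\rm Im}(\varphi^{\prime\prime})$ inside $H^2(H,\bZ[G/H])\simeq\widehat{H}^0(H,\bZ[G/H])=\bZ[G/H]^H/N_H(\bZ[G/H])$: the inclusion $\bZ\hookrightarrow\bZ[G/H]$ sends $1$ to $\sum_{i,j}e_{i,j}$, and modulo $N_H(\bZ[G/H])$ the contributions of the $p-1$ free $H$-orbits are absorbed, so the class of $\varphi^{\prime\prime}(1)$ equals that of $\sum_{j=1}^p e_{p,j}$, which is the diagonal generator of $\widehat{H}^0(H,\langle e_{p,1},\ldots,e_{p,p}\rangle_\bZ)\simeq(\bZ/p\bZ)^{\oplus p}$. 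By (3) the image of ${\rm res}_2^\prime$ is exactly this same diagonal copy of $\bZ/p\bZ$ in the same submodule $\langle e_{p,1},\ldots,e_{p,p}\rangle_\bZ$, so ${\rm Im}(\varphi^{\prime\prime})={\rm Im}({\rm res}_2^\prime)$.

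Finally, (5) is a diagram chase: exactness of the middle row gives ${\rm Ker}(\delta^\prime)={\rm Im}(\psi^\prime)$; commutativity gives ${\rm res}_3^\prime\circ\psi^\prime=\psi^{\prime\prime}\circ{\rm res}_2^\prime$; by (4) and exactness of the bottom row ${\rm Im}({\rm res}_2^\prime)={\rm Im}(\varphi^{\prime\prime})={\rm Ker}(\psi^{\prime\prime})$, so $\psi^{\prime\prime}\circ{\rm res}_2^\prime=0$, hence ${\rm res}_3^\prime\circ\psi^\prime=0$ and ${\rm Ker}(\delta^\prime)={\rm Im}(\psi^\prime)\subseteq{\rm Ker}({\rm res}_3^\prime)$. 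The step I expect to be the main obstacle is (3)--(4): carrying out the Mackey/double-coset bookkeeping cleanly enough to identify the restriction on the $i=p$ summand with the diagonal embedding, and, correspondingly, matching the ``diagonal'' $\bZ/p\bZ$ produced by $\varphi^{\prime\prime}$ with the literal submodule $\langle e_{p,1},\ldots,e_{p,p}\rangle_\bZ$ appearing in the $H_0^\prime$-decomposition; once these identifications are pinned down, (1), (2) and (5) follow immediately.
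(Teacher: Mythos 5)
Your proof is correct and follows essentially the same route as the paper: the same orbit decomposition $\bZ[G/H]\simeq\bigoplus_{i=1}^{p}\bZ[H_0^\prime/\langle ac^i\rangle]$ for (3), the same use of $H^1(H,J_{G/H})=0$ for (2), and the same diagram chase for (5). The only (harmless) variations are that in (4) you identify both images explicitly with the diagonal copy of $\bZ/p\bZ$ inside $\widehat{H}^0(H,\langle e_{p,1},\ldots,e_{p,p}\rangle_\bZ)\simeq(\bZ/p\bZ)^{\oplus p}$, whereas the paper deduces equality from the inclusion ${\rm Im}(\varphi^{\prime\prime})\leq {\rm Im}({\rm res}_2^\prime)$ together with the fact that both groups have order $p$, and in (5) you note that ${\rm Im}({\rm res}_2^\prime)={\rm Ker}(\psi^{\prime\prime})$ forces $\psi^{\prime\prime}\circ{\rm res}_2^\prime=0$ outright rather than subtracting $\varphi^\prime(\gamma)$ from a chosen preimage as the paper does.
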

\begin{proof}
(1) ${\rm res}_1^\prime: H^2(H_0^\prime,\bZ)\to H^2(H,\bZ)$ is surjective 
because it is just the restriction 
${\rm res}_1^\prime: H^2(H_0^\prime,\bZ)\simeq H^1(H_0^\prime,\bQ/\bZ)
={\rm Hom}(H_0^\prime,\bQ/\bZ)\simeq H_0^\prime\simeq (C_p)^2\to 
H^2(H,\bZ)\simeq H^1(H,\bQ/\bZ)={\rm Hom}(H,\bQ/\bZ)\simeq H\simeq C_p$ 
of the character module.\\
(2) It follows from 
$H^1(H,J_{G/H})=0$ 
that $\varphi^{\prime\prime}: H^2(H,\bZ)\to H^2(H,\bZ[G/H])$ is injective.\\ 
(3) The action of $H_0^\prime=\langle a,c\rangle$ on $\bZ[G/H]$ 
yields the corresponding orbit decomposition $\bZ[G/H]\simeq \bigoplus_{i=1}^p\bZ[H_0^\prime/\langle ac^i\rangle]$ 
and $H=\langle a\rangle$ for $i=p$.\\ 
(4) It follows from (1) and the commutativity 
${\rm res}_2^\prime\circ \varphi^\prime=\varphi^{\prime\prime}\circ{\rm res}_1^\prime$ 
that ${\rm Im}(\varphi^{\prime\prime})\leq {\rm Im}({\rm res}_2^\prime)$. 
By (2), we have ${\rm Im}(\varphi^{\prime\prime})\simeq \bZ/p\bZ$. 
By (3), we have ${\rm Im}({\rm res}_2^\prime)\simeq \bZ/p\bZ$. 
Hence we have ${\rm Im}(\varphi^{\prime\prime})={\rm Im}({\rm res}_2^\prime)$.\\ 
(5) 
Take $\alpha\in {\rm Ker}(\delta^\prime)$. 
By ${\rm Ker}(\delta^\prime)={\rm Im}(\psi^\prime)$, 
there exists $\beta\in H^2(H_0^\prime,\bZ[G/H])$ such that $\psi^\prime(\beta)=\alpha$. 
By (1), (4) and the commutativity 
${\rm res}_2^\prime\circ \varphi^\prime=\varphi^{\prime\prime}\circ{\rm res}_1^\prime$, 
there exists $\gamma\in H^2(H_0^\prime,\bZ)$ such that 
${\rm res}_2^\prime(\varphi^\prime(\gamma))={\rm res}_2^\prime(\beta)$. 
Hence we have $\beta-\varphi^\prime(\gamma)\in {\rm Ker}({\rm res}_2^\prime)$.  
This implies that $\alpha=\psi^\prime(\beta)=\psi^\prime(\beta-\varphi^\prime(\gamma))$ 
because $\psi^\prime(\varphi^\prime(\gamma))=0$. 
By the commutativity 
${\rm res}_3^\prime\circ \psi^\prime=\psi^{\prime\prime}\circ{\rm res}_2^\prime$, 
we get ${\rm res}_3^\prime(\alpha)={\rm res}_3^\prime\circ \psi^\prime(\beta-\varphi^\prime(\gamma))
=\psi^{\prime\prime}\circ {\rm res}_2^\prime(\beta-\varphi^\prime(\gamma))=0$ 
because $\beta-\varphi^\prime(\gamma)\in {\rm Ker}({\rm res}_2^\prime)$. 
We conclude that $\alpha\in {\rm Ker}({\rm res}_3^\prime)$. 
\end{proof}

\begin{proposition}\label{prop2.14}
Let $H=\langle a\rangle\simeq C_p$ $\lhd$ 
$H_0^\prime=\langle a,c\rangle\simeq (C_p)^2$ $\lhd$ 
$G=E_p(p^2)=\langle a,b,c\rangle\simeq (C_p)^2\rtimes C_p$. 
Then we have 
\begin{align*}
{\rm Ker}\{H^2(H_0^\prime,J_{G/H})\xrightarrow{\rm res_3^\prime}H^2(H,J_{G/H})\}\simeq (\bZ/p\bZ)^{\oplus p-2}. 
\end{align*}
\end{proposition}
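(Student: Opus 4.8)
The plan is to identify ${\rm Ker}(\mathrm{res}_3^\prime)$ with a cohomology group of the cyclic quotient $H_0^\prime/H$ via inflation, and then to compute that group by a short explicit calculation. Since $H_0^\prime=\langle a,c\rangle$ is abelian, $H\lhd H_0^\prime$, and I would apply the $7$-term exact sequence of the Lyndon--Hochschild--Serre spectral sequence exactly as in the proof of Proposition \ref{prop2.11} (Dekimpe, Hartl and Wauters \cite{DHW12}), this time with normal subgroup $N=H$, ambient group $H_0^\prime$, and module $J_{G/H}$:
\begin{align*}
H^1(H,J_{G/H})^{H_0^\prime/H}\xrightarrow{\rm tr} H^2(H_0^\prime/H,(J_{G/H})^H)\xrightarrow{\rm inf} H^2(H_0^\prime,J_{G/H})_1\xrightarrow{\rho} H^1(H_0^\prime/H,H^1(H,J_{G/H})),
\end{align*}
where $H^2(H_0^\prime,J_{G/H})_1={\rm Ker}\{H^2(H_0^\prime,J_{G/H})\xrightarrow{\mathrm{res}_3^\prime}H^2(H,J_{G/H})\}$. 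By Lemma \ref{lem2.10} we have $H^1(H,J_{G/H})=0$, so the two outer terms vanish and inflation becomes an isomorphism ${\rm Ker}(\mathrm{res}_3^\prime)\simeq H^2(H_0^\prime/H,(J_{G/H})^H)=H^2(C_p,(J_{G/H})^H)$, where $C_p=H_0^\prime/H=\langle\overline{c}\rangle$ acts by $\overline{c}:e_{i,j}\mapsto e_{i,j+1}$. It then remains to show $H^2(C_p,(J_{G/H})^H)\simeq(\bZ/p\bZ)^{\oplus p-2}$.

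Next I would pin down $(J_{G/H})^H$ as a $C_p$-module. Using the $\bZ$-basis $e_{i,j}=b^ic^jH$ $(1\le i,j\le p)$ of $\bZ[G/H]$ and the recorded $G$-action, $H=\langle a\rangle$ acts on the $i$-th ``row'' $\{e_{i,j}\}_j$ as a single $p$-cycle when $1\le i\le p-1$ and trivially when $i=p$; hence $\bZ[G/H]\simeq\bigoplus_{i=1}^{p-1}\bZ[C_p]\oplus\bZ^{\oplus p}$ as an $H$-lattice, so $\bZ[G/H]^H$ has $\bZ$-basis $\{s_i:=\sum_{j=1}^{p}e_{i,j}:1\le i\le p-1\}\cup\{e_{p,j}:1\le j\le p\}$, on which $\overline{c}$ fixes each $s_i$ and cyclically permutes the $e_{p,j}$; thus $\bZ[G/H]^H\simeq\bZ^{\oplus p-1}\oplus\bZ[C_p]$ as a $C_p$-lattice. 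Taking $H$-invariants of $0\to\bZ\to\bZ[G/H]\to J_{G/H}\to 0$ (which remains exact on the right because $H^1(H,\bZ)=0$) yields a short exact sequence of $C_p$-lattices $0\to\bZ\to\bZ[G/H]^H\to(J_{G/H})^H\to 0$ in which $\bZ$ carries the trivial action and $1$ maps to $w=\sum_{i,j}e_{i,j}=\sum_{i=1}^{p-1}s_i+\sum_{j=1}^{p}e_{p,j}$.

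Finally, since $C_p$ is cyclic, $H^2(C_p,-)\simeq\widehat{H}^0(C_p,-)$ and $\widehat{H}^1(C_p,\bZ)=0$, so the Tate cohomology sequence of the above extension reads $\widehat{H}^0(C_p,\bZ)\xrightarrow{f}\widehat{H}^0(C_p,\bZ[G/H]^H)\to\widehat{H}^0(C_p,(J_{G/H})^H)\to 0$; here $\widehat{H}^0(C_p,\bZ)\simeq\bZ/p\bZ$ and, the $\bZ[C_p]$-summand being cohomologically trivial, $\widehat{H}^0(C_p,\bZ[G/H]^H)\simeq(\bZ/p\bZ)^{\oplus p-1}$ with coordinates the classes of $s_1,\dots,s_{p-1}$. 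I would then observe that $f$ sends the generator to the class of $w$, which equals $(1,\dots,1)$ since $\sum_j e_{p,j}=N_{C_p}(e_{p,1})$ lies in $N_{C_p}(\bZ[G/H]^H)$ and so vanishes in $\widehat{H}^0$; as this is nonzero, $f$ is injective and $\widehat{H}^0(C_p,(J_{G/H})^H)\simeq(\bZ/p\bZ)^{\oplus p-1}/\langle(1,\dots,1)\rangle\simeq(\bZ/p\bZ)^{\oplus p-2}$, which together with the first paragraph gives ${\rm Ker}(\mathrm{res}_3^\prime)\simeq(\bZ/p\bZ)^{\oplus p-2}$. The main obstacle is exactly this last computation: getting the $C_p$-module structure of $(J_{G/H})^H$ right and verifying that the connecting map $f$ meets the nonzero ``all-ones'' class (equivalently, that the $\bZ[C_p]$-part of $w$ dies in $\widehat{H}^0$). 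As a consistency check, $p-2=(p-1)-1$ agrees with $H^2(H_0^\prime,J_{G/H})\simeq(\bZ/p\bZ)^{\oplus p-1}$ from Lemma \ref{lem2.12}(6), and the result confirms that $\mathrm{res}_3^\prime\neq 0$, which the commutative diagram by itself cannot see since $\mathrm{res}_4^\prime:H^3(H_0^\prime,\bZ)\to H^3(H,\bZ)=0$ is the zero map.
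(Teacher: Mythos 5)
Your proposal is correct and follows essentially the same route as the paper: the seven-term Lyndon--Hochschild--Serre sequence with $N=H$ inside $H_0^\prime$, the vanishing $H^1(H,J_{G/H})=0$ from Lemma \ref{lem2.10}, and the resulting identification ${\rm Ker}({\rm res}_3^\prime)\simeq H^2(H_0^\prime/H,(J_{G/H})^H)\simeq\widehat{H}^0(C_p,(J_{G/H})^H)$. The only (harmless) difference is in the last step: the paper observes directly that $(J_{G/H})^H$ is a permutation $H_0^\prime/H$-lattice with $p-2$ fixed basis elements, whereas you reach the same $(\bZ/p\bZ)^{\oplus p-2}$ by running Tate cohomology over the exact sequence $0\to\bZ\to\bZ[G/H]^H\to(J_{G/H})^H\to 0$ and checking that the all-ones class is hit.
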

\begin{proof}
For a normal subgroup $N\lhd G$ and $G$-module $M$, 
the Lyndon-Hochschild-Serre spectral sequence gives rise to 
the $7$-term 
exact sequence
\begin{align*}
0 &\to H^1(G/N,M^N)\xrightarrow{\rm inf} H^1(G,M)
\xrightarrow{\rm res} H^1(N,M)^{G/N} 
\xrightarrow{\rm tr} H^2(G/N,M^N) \\
&\xrightarrow{\rm inf} H^2(G,M)_1 
\xrightarrow{\rho} H^1(G/N,H^1(N,M))
\xrightarrow{\lambda} H^3(G/N,M^N)
\end{align*}
where $H^2(G,M)_1:={\rm Ker}\{H^2(G,M)
\xrightarrow{\rm res} H^2(N,M)\}$ 
(see Dekimpe, Hartl and Wauters \cite{DHW12}, see also Huebschmann \cite{Hue81a}, \cite{Hue81b}). 
We apply this to the case $G=H_0^\prime$, $N=H$ and $M=J_{G/H}$. 
Then we get 
\begin{align*}
H^1(H,J_{G/H})^{H_0^\prime/H}=0 
\xrightarrow{\rm tr} H^2(H_0^\prime/H,(J_{G/H})^{H})
\xrightarrow[\sim]{\rm inf} H^2(H_0^\prime,J_{G/H})_1 
\xrightarrow{\rho} H^1(H_0^\prime/H,H^1(H,J_{G/H}))=0
\end{align*}
where $H^2(H_0^\prime,J_{G/H})_1={\rm Ker}\{H^2(H_0^\prime,J_{G/H})\xrightarrow{\rm res_3^\prime}H^2(H,J_{G/H})\}$ 
because $H^1(H,J_{G/H})=0$ by Lemma \ref{lem2.10}. 
We also obtain that 
$H^2(H_0^\prime/H,(J_{G/H})^H)\simeq \widehat{H}^0(H_0^\prime/H,(J_{G/H})^H)\simeq (\bZ/p\bZ)^{\oplus p-2}$ 
because $(J_{G/H})^H$ is a permutation $H_0^\prime/H$-lattice with $p-2$ fixed points 
(see the proof of Lemma \ref{lem2.12} (4), (6)).  
Hence we have $H^2(H_0^\prime,J_{G/H})_1\simeq (\bZ/p\bZ)^{\oplus p-2}$.  
\end{proof}

\begin{proposition}\label{prop2.15}
Let $H=\langle a\rangle\simeq C_p$ $\lhd$ 
$H_0^\prime=\langle a,c\rangle\simeq (C_p)^2$ $\lhd$ 
$G=E_p(p^2)=\langle a,b,c\rangle\simeq (C_p)^2\rtimes C_p$. 
Then we have 
\begin{align*}
{\rm Ker}\{H^2(G,J_{G/H})\xrightarrow{{\rm res_3}^\prime\circ{\rm res_3}} H^2(H,J_{G/H})\}\simeq 
\langle \overline{f_2}\rangle\simeq\bZ/p\bZ. 
\end{align*}
\end{proposition}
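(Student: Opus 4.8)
The plan is to transport the computation from coefficients in $J_{G/H}$ to coefficients in $\bZ$ (hence to Schur multipliers, where Lemma~\ref{lem2.5} has already done the work) by means of the connecting homomorphism $\delta$, using the commutative diagram with exact rows preceding Lemma~\ref{lem2.13}. Since
\begin{align*}
{\rm Ker}\{{\rm res}_3^\prime\circ{\rm res}_3\}={\rm res}_3^{-1}\bigl({\rm Ker}({\rm res}_3^\prime)\bigr),
\end{align*}
the first step is to identify ${\rm Ker}({\rm res}_3^\prime)$ inside $H^2(H_0^\prime,J_{G/H})$.

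First I would show that ${\rm Ker}({\rm res}_3^\prime)={\rm Ker}(\delta^\prime)$, where $\delta^\prime\colon H^2(H_0^\prime,J_{G/H})\to H^3(H_0^\prime,\bZ)$ is the connecting map in the second row of that diagram. The inclusion ${\rm Ker}(\delta^\prime)\subseteq{\rm Ker}({\rm res}_3^\prime)$ is exactly Lemma~\ref{lem2.13}(5). For the reverse inclusion I compare orders: by Lemma~\ref{lem2.12}(5) we have $H^3(H_0^\prime,\bZ[G/H])=0$, so $\delta^\prime$ is surjective; since $H^2(H_0^\prime,J_{G/H})\simeq(\bZ/p\bZ)^{\oplus p-1}$ and $H^3(H_0^\prime,\bZ)\simeq\bZ/p\bZ$ by Lemma~\ref{lem2.12}(6),(3), this gives $|{\rm Ker}(\delta^\prime)|=p^{p-2}$; and $|{\rm Ker}({\rm res}_3^\prime)|=p^{p-2}$ by Proposition~\ref{prop2.14}. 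As one of these two subgroups of the finite group $H^2(H_0^\prime,J_{G/H})$ is contained in the other and they have equal order, they coincide.

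Next I would use the functoriality ${\rm res}_4\circ\delta=\delta^\prime\circ{\rm res}_3$ (the right-hand square of the displayed diagram) together with the fact, established in Proposition~\ref{prop2.2}, that $\delta\colon H^2(G,J_{G/H})\to H^3(G,\bZ)$ is an isomorphism. Combining these,
\begin{align*}
{\rm Ker}\{{\rm res}_3^\prime\circ{\rm res}_3\}
&=\{x:{\rm res}_3(x)\in{\rm Ker}(\delta^\prime)\}
=\{x:\delta^\prime({\rm res}_3(x))=0\}\\
&=\{x:{\rm res}_4(\delta(x))=0\}=\delta^{-1}\bigl({\rm Ker}({\rm res}_4)\bigr).
\end{align*}
To finish, I would identify ${\rm res}_4\colon H^3(G,\bZ)\to H^3(H_0^\prime,\bZ)$, via the canonical isomorphisms $H^2(-,\bQ/\bZ)\xrightarrow{\sim}H^3(-,\bZ)$ arising from $0\to\bZ\to\bQ\to\bQ/\bZ\to0$ (which commute with restriction, since $\bZ,\bQ,\bQ/\bZ$ carry trivial actions and the connecting maps are natural in the group), with the restriction $H^2(G,\bQ/\bZ)\to H^2(H_0^\prime,\bQ/\bZ)$, whose kernel is $\langle\overline{f_2}\rangle\simeq\bZ/p\bZ$ by Lemma~\ref{lem2.5} (case $H^\prime=H_0^\prime=\langle a,c\rangle$). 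Since the composite isomorphism $H^2(G,J_{G/H})\xrightarrow{\delta}H^3(G,\bZ)\simeq H^2(G,\bQ/\bZ)$ is precisely the one through which $\overline{f_1},\overline{f_2}$ are regarded as elements of $H^2(G,J_{G/H})$ (cf.\ Lemma~\ref{lem2.3} and Theorem~\ref{thmain}), we obtain ${\rm Ker}\{{\rm res}_3^\prime\circ{\rm res}_3\}=\langle\overline{f_2}\rangle\simeq\bZ/p\bZ$, as claimed.

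The step I expect to be the main obstacle is the equality ${\rm Ker}({\rm res}_3^\prime)={\rm Ker}(\delta^\prime)$: it requires assembling precisely the right inputs --- Lemma~\ref{lem2.12}(3),(5),(6) for the order of ${\rm Ker}(\delta^\prime)$, Lemma~\ref{lem2.13}(5) for the inclusion, Proposition~\ref{prop2.14} for the order of ${\rm Ker}({\rm res}_3^\prime)$ --- and one must then be careful to keep ``$\overline{f_2}$'' referring consistently to the same class along the chain of identifications between cohomology with coefficients in $J_{G/H}$, in $\bZ$, and in $\bQ/\bZ$. Everything else is a formal chase in the displayed commutative diagram.
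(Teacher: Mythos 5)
Your proposal is correct and follows essentially the same route as the paper's own proof: both establish ${\rm Ker}({\rm res}_3^\prime)={\rm Ker}(\delta^\prime)$ by combining the inclusion from Lemma~\ref{lem2.13}(5) with the order count $p^{p-2}$ coming from Proposition~\ref{prop2.14} on one side and from the surjectivity of $\delta^\prime$ together with Lemma~\ref{lem2.12}(3),(6) on the other, and then use the commutativity ${\rm res}_4\circ\delta=\delta^\prime\circ{\rm res}_3$ with $\delta$ an isomorphism to reduce to ${\rm Ker}({\rm res}_4)=\langle\overline{f_2}\rangle$ from Lemma~\ref{lem2.5}. Your write-up is in fact slightly more explicit than the paper's at the step ${\rm Ker}({\rm res}_3^\prime)={\rm Ker}(\delta^\prime)$, which the paper leaves partly implicit; the paper's additional remark that ${\rm res}_3$ is injective (via Proposition~\ref{prop2.11}) is not actually needed for the diagram chase, and you correctly omit it.
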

\begin{proof}
It follows from Proposition \ref{prop2.11} via the $7$-term exact sequence 
that ${\rm res}_3: H^2(G,J_{G/H})\to H^2(H_0^\prime,J_{G/H})$ is injective. 
It also follows from Proposition \ref{prop2.14} via the $7$-term exact sequence again 
that ${\rm Ker}({\rm res}_3^\prime: H^2(H_0^\prime,J_{G/H}\to H^2(H,J_{G/H}))\simeq (\bZ/p\bZ)^{\oplus p-2}$. 
From the surjectivity of $\delta^\prime: H^2(H_0^\prime,J_{G/H})\simeq (\bZ/p\bZ)^{\oplus p-1}
\to H^3(H_0^\prime,\bZ)\simeq\bZ/p\bZ$, we have 
${\rm Ker}(\delta^\prime)\simeq (\bZ/p\bZ)^{\oplus p-2}$. 
Because $\delta$ is an isomorphism, it follows from the commutativity 
${\rm res}_4\circ\delta=\delta^\prime\circ{\rm res}_3$ and 
Lemma \ref{lem2.5} that 
$\delta({\rm Ker}({\rm res}_3^\prime\circ{\rm res}_3))=
\delta({\rm Ker}(\delta^\prime\circ{\rm res}_3))=
\delta({\rm Ker}({\rm res}_4\circ\delta))=
{\rm Ker}({\rm res}_4)=\langle \overline{f_2}\rangle$. 
\end{proof}

We complete the proof of Theorem \ref{thmain} 
via Proposition \ref{prop2.11} and Proposition \ref{prop2.15}. 
\end{proof}

\begin{example}[$p=3$]\label{ex2.16}
Let $G=E_p(p^3)=\langle a,b,c\rangle\simeq (C_p)^2\rtimes C_p$. 
Assume that $p=3$. 
For $H=\{1\}$, we get
\begin{align*}
&{\rm Ker}\{H^2(G,J_G)\xrightarrow{\rm res} H^2(\{1\},J_G)\}=H^2(G,J_G)=\langle f_1,f_2\rangle\simeq (\bZ/3\bZ)^{\oplus 2},\\
&{\rm Ker}\{H^2(G,J_G)\xrightarrow{\rm res} H^2(G,J_G)\}=0
\end{align*}
and\\

\renewcommand{\arraystretch}{1.2}
\begin{tabular}{c||c|cccc}
$H^\prime\simeq C_p$ 
& $Z(G)=\langle c\rangle$ & $H_0=\langle a\rangle$ & $H_1=\langle ab\rangle$ 
& $H_2=\langle ab^2\rangle$ & $H_3=\langle b\rangle$\\\hline
${\rm Ker}\{H^2(G,J_G)\xrightarrow{\rm res} H^2(H^\prime,J_G)\}$ 
& $(\bZ/3\bZ)^{\oplus 2}$ & $(\bZ/3\bZ)^{\oplus 2}$ & $(\bZ/3\bZ)^{\oplus 2}$ & $(\bZ/3\bZ)^{\oplus 2}$ & $(\bZ/3\bZ)^{\oplus 2}$
\end{tabular}\vspace*{2mm}

\begin{tabular}{c||cccc}
$H^\prime\simeq (C_p)^2$
& $H_0^\prime=\langle a,c\rangle$ & $H_1^\prime=\langle ab,c\rangle$ & 
$H_2^\prime=\langle ab^2,c\rangle$ & $H_3^\prime=\langle b,c\rangle$\\\hline
${\rm Ker}\{H^2(G,J_G)\xrightarrow{\rm res} H^2(H^\prime,J_G)\}$ 
& $\langle f_2\rangle\simeq \bZ/3\bZ$ & $\langle f_1f_2^2\rangle\simeq \bZ/3\bZ$ 
& $\langle f_1f_2\rangle\simeq \bZ/3\bZ$ & $\langle f_1\rangle\simeq \bZ/3\bZ$ 
\end{tabular}\vspace*{2mm}~\\

For $H=\langle a\rangle\simeq C_p$, we get
\begin{align*}
&{\rm Ker}\{H^2(G,J_{G/H})\xrightarrow{\rm res} H^2(\{1\},J_{G/H})\}=H^2(G,J_{G/H})=\langle f_1,f_2\rangle\simeq (\bZ/3\bZ)^{\oplus 2},\\
&{\rm Ker}\{H^2(G,J_{G/H})\xrightarrow{\rm res} H^2(G,J_{G/H})\}=0
\end{align*}
and\\

\begin{tabular}{c||c|cccc}
$H^\prime\simeq C_p$
& $Z(G)=\langle c\rangle$ & $H_0=\langle a\rangle$ & $H_1=\langle ab\rangle$ 
& $H_2=\langle ab^2\rangle$ & $H_3=\langle b\rangle$\\\hline
${\rm Ker}\{H^2(G,J_{G/H})\xrightarrow{\rm res} H^2(H^\prime,J_{G/H})\}$ & 
$(\bZ/3\bZ)^{\oplus 2}$ & $\langle f_2\rangle\simeq\bZ/3\bZ$ & $(\bZ/3\bZ)^{\oplus 2}$ & $(\bZ/3\bZ)^{\oplus 2}$ & $(\bZ/3\bZ)^{\oplus 2}$
\end{tabular}\vspace*{2mm}

\begin{tabular}{c||cccc}
$H^\prime\simeq (C_p)^2$
& $H_0^\prime=\langle a,c\rangle$ & $H_1^\prime=\langle ab,c\rangle$ & 
$H_2^\prime=\langle ab^2,c\rangle$ & $H_3^\prime=\langle b,c\rangle$\\\hline
${\rm Ker}\{H^2(G,J_{G/H})\xrightarrow{\rm res} H^2(H^\prime,J_{G/H})\}$ 
& $0$ & $\langle f_1f_2^2\rangle\simeq \bZ/3\bZ$ 
& $\langle f_1f_2\rangle\simeq \bZ/3\bZ$ & $\langle f_1\rangle\simeq \bZ/3\bZ$
\end{tabular}\vspace*{2mm}
\renewcommand{\arraystretch}{1}
\end{example}

As a consequence of Theorem \ref{thmain}, 
we get the 
Tamagawa number $\tau(T)$ of 
$T=R^{(1)}_{k/k}(\bG_m)$ of $K/k$ over a global field $k$ 
via Ono's formula $\tau(T)=|H^1(k,\widehat{T})|/|\Sha(T)|
=|H^1(G,J_{G/H})|/|\Sha(T)|$ 
(see Ono \cite[Main theorem, page 68]{Ono63}, \cite{Ono65}, 
Voskresenskii \cite[Theorem 2, page 146]{Vos98} and 
Hoshi, Kanai and Yamasaki \cite[Section 8, Application 2]{HKY22}). 
\begin{corollary}\label{cor2.17}
Let the notation be as in Theorem \ref{thmain}. 
Then the Tamagawa number 
\begin{align*}
\tau(T)=
\begin{cases}
p^2/|\Sha(T)| & {\rm if}\quad H=\{1\},\\
p/|\Sha(T)| & {\rm if}\quad H\simeq C_p\ 
{\rm with}\ H\neq Z(G)=\langle c\rangle
\end{cases}
\end{align*}
where $\Sha(T)\leq (\bZ/p\bZ)^{\oplus 2}$ $($resp. $\Sha(T)\leq \bZ/p\bZ$$)$ 
when $H=\{1\}$ $($resp. $H\simeq C_p$ with $H\neq Z(G)$$)$ 
is given as in Theorem \ref{thmain}. 
\end{corollary}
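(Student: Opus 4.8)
The plan is to read off $\tau(T)$ directly from Ono's formula $\tau(T)=|H^1(k,\widehat{T})|/|\Sha(T)|$. Since $L/k$ is Galois with $G=\mathrm{Gal}(L/k)$ and $\widehat{T}\simeq J_{G/H}$, we have $H^1(k,\widehat{T})\simeq H^1(G,J_{G/H})$, so the only quantity that still has to be computed is the order of $H^1(G,J_{G/H})$; the structure of $\Sha(T)$ is already supplied by Theorem \ref{thmain}, which also gives the stated bounds $\Sha(T)\leq(\bZ/p\bZ)^{\oplus 2}$ when $H=\{1\}$ and $\Sha(T)\leq\bZ/p\bZ$ when $H\simeq C_p$ with $H\neq Z(G)$.

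When $H=\{1\}$, I would apply the long exact cohomology sequence to $0\to\bZ\to\bZ[G]\to J_G\to 0$. Shapiro's lemma for the trivial subgroup gives $H^i(G,\bZ[G])\simeq H^i(\{1\},\bZ)=0$ for $i\geq 1$, and $H^1(G,\bZ)=\mathrm{Hom}(G,\bZ)=0$, so the sequence collapses to the isomorphism $\delta:H^1(G,J_G)\xrightarrow{\sim}H^2(G,\bZ)\simeq H^1(G,\bQ/\bZ)\simeq G^{ab}$ already recorded before Theorem \ref{thV2}. For $G=E_p(p^3)$ one has $[G,G]=Z(G)=\langle c\rangle$, hence $G^{ab}\simeq(C_p)^2$ and $|H^1(G,J_G)|=p^2$.

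When $H\simeq C_p$ with $H\neq Z(G)$, the order is already in hand: Lemma \ref{lem2.12}(1) gives $H^1(G,J_{G/H})\simeq\bZ/p\bZ$, so $|H^1(G,J_{G/H})|=p$. Substituting $p^2$, respectively $p$, into Ono's formula $\tau(T)=|H^1(G,J_{G/H})|/|\Sha(T)|$ yields the two displayed expressions for $\tau(T)$.

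There is essentially no obstacle; the proof is a two-line assembly of Ono's formula with Theorem \ref{thmain}. The only point worth flagging is that, unlike the proof of Theorem \ref{thmain} (which only needs $H^1(G,J_{G/H})$ for $H\simeq C_p$ via Lemma \ref{lem2.12}), the case $H=\{1\}$ requires the short elementary computation $H^1(G,J_G)\simeq G^{ab}\simeq(C_p)^2$ spelled out above.
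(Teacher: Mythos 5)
Your proposal is correct and follows essentially the same route as the paper: both reduce to computing $|H^1(G,J_{G/H})|$ via the long exact sequence attached to $0\to\bZ\to\bZ[G/H]\to J_{G/H}\to 0$ together with Shapiro's lemma, and then substitute into Ono's formula; the paper treats both cases uniformly by identifying $H^1(G,J_{G/H})$ with the kernel of $G^{ab}\to H^{ab}$, whereas you compute the $H=\{1\}$ case directly and quote Lemma \ref{lem2.12}(1) for the other, which amounts to the same calculation.
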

\begin{proof}
As in the proof of Lemma \ref{lem2.12} (1), by Proposition \ref{prop2.2}, 
we get $H^1(G,\bZ[G/H])=0\to H^1(G,J_{G/H})$ 
$\to$ $H^2(G,\bZ)\simeq G^{ab}\simeq (\bZ/p\bZ)^{\oplus 2}\to 
H^2(G,\bZ[G/H])\simeq H^{ab}\xrightarrow{0} 
H^2(G,J_{G/H})\xrightarrow[\sim]{\delta}H^3(G,\bZ)$. 
If $H=\{1\}$, then we have 
$H^1(G,J_G)\simeq G^{ab}\simeq (\bZ/p\bZ)^{\oplus 2}$ because 
$H^{ab}=\{1\}$. 
If $H\simeq C_p$ with $H\neq Z(G)$, 
then $H^1(G,J_{G/H})\simeq \bZ/p\bZ$ because 
$H^{ab}\simeq \bZ/p\bZ$. 
Applying Ono's formula $\tau(T)=|H^1(k,\widehat{T})|/|\Sha(T)|=|H^1(G,J_{G/H})|/|\Sha(T)|$ (see Ono \cite[Main theorem, page 68]{Ono63}), 
we get the assertion. 
\end{proof}
%



\begin{thebibliography}{Hue81b}
\bibitem[Bar81a]{Bar81a} 
H.-J. Bartels, 
{\it Zur Arithmetik von Konjugationsklassen in algebraischen Gruppen}, 
J. Algebra \textbf{70} (1981) 179--199. 
\bibitem[Bar81b]{Bar81b} 
H.-J. Bartels, 
{\it Zur Arithmetik von Diedergruppenerweiterungen}, 
Math. Ann. \textbf{256} (1981) 465--473.
\bibitem[BT82]{BT82} 
F. R. Beyl, J. Tappe, 
{\it Group extensions, representations, and the Schur multiplicator}, 
Lecture Notes in Mathematics, 958, Springer-Verlag, Berlin-New York, 1982. 
\bibitem[Bro82]{Bro82}
K. S. Brown, 
{\it Cohomology of groups}, 
Graduate Texts in Mathematics, 87, Springer-Verlag, New York-Berlin, 
1982, x+306 pp.
\bibitem[BN16]{BN16}
T. D. Browning, R. Newton, 
{\it The proportion of failures of the Hasse norm principle}, 
Mathematika \textbf{62} (2016) 337--347. 
\bibitem[CTS77]{CTS77} 
J.-L. Colliot-Th\'{e}l\`{e}ne, J.-J. Sansuc, 
{\it La R-\'{e}quivalence sur les tores}, 
Ann. Sci. \'{E}cole Norm. Sup. (4) \textbf{10} (1977) 175--229. 
\bibitem[CTS87]{CTS87} 
J.-L. Colliot-Th\'{e}l\`{e}ne, J.-J. Sansuc, 
{\it Principal homogeneous spaces under flasque tori: Applications}, 
J. Algebra \textbf{106} (1987) 148--205. 
\bibitem[CTS07]{CTS07} 
J.-L. Colliot-Th\'{e}l\`{e}ne, J.-J. Sansuc, 
{\it The rationality problem for fields of invariants under linear 
algebraic groups (with special regards to the Brauer group)}, 
Algebraic groups and homogeneous spaces, 113--186, 
Tata Inst. Fund. Res. Stud. Math., 19, Tata Inst. Fund. Res., Mumbai, 2007.
\bibitem[CTS21]{CTS21} 
J.-L. Colliot-Th\'{e}l\`{e}ne, A. N. Skorobogatov, 
{\it The Brauer-Grothendieck group}, 
Ergeb. Math. Grenzgeb. (3) 71,  
Springer, Cham, 2021, xv+453 pp.
\bibitem[CK00]{CK00} 
A. Cortella, B. Kunyavskii, 
{\it Rationality problem for generic tori in simple groups}, 
J. Algebra \textbf{225} (2000) 771--793. 
\bibitem[DHW12]{DHW12}
K. Dekimpe, M. Hartl and S. Wauters, 
\textit{A seven-term exact sequence for the cohomology of a group extension}, 
J. Algebra \textbf{369} (2012) 70--95. 
\bibitem[DP87]{DP87} 
Yu. A. Drakokhrust, V. P. Platonov, 
{\it The Hasse norm principle for algebraic number fields}, (Russian)
Izv. Akad. Nauk SSSR Ser. Mat. \textbf{50} (1986) 946--968; 
translation in Math. USSR-Izv. \textbf{29} (1987) 299--322. 
\bibitem[End11]{End11} 
S. Endo, 
{\it The rationality problem for norm one tori}, 
Nagoya Math. J. \textbf{202} (2011) 83--106. 
\bibitem[EM73]{EM73} 
S. Endo, T. Miyata, 
{\it Invariants of finite abelian groups}, 
J. Math. Soc. Japan \textbf{25} (1973) 7--26. 
\bibitem[EM75]{EM75} 
S. Endo, T. Miyata, 
{\it On a classification of the function fields of algebraic tori}, 
Nagoya Math. J. \textbf{56} (1975) 85--104. 
\bibitem[Flo]{Flo} 
M. Florence, 
{\it Non rationality of some norm-one tori}, preprint (2006).
\bibitem[FLN18]{FLN18}
C. Frei, D. Loughran, R. Newton, 
{\it The Hasse norm principle for abelian extensions}, 
Amer. J. Math. \textbf{140} (2018) 1639--1685.
\bibitem[GAP]{GAP} 
The GAP Group, GAP -- Groups, Algorithms, and Programming, 
Version 4.9.3; 2018. (\url{http://www.gap-system.org}).
\bibitem[Ger77]{Ger77} 
F. Gerth III, 
{\it The Hasse norm principle in metacyclic extensions of number fields}, 
J. London Math. Soc. (2) \textbf{16} (1977) 203--208. 
\bibitem[Ger78]{Ger78} 
F. Gerth III, 
{\it The Hasse norm principle in cyclotomic number fields}, 
J. Reine Angew. Math. \textbf{303/304} (1978) 249--252. 
\bibitem[Gor80]{Gor80}
D. Gorenstein, {\it Finite groups}, Second edition, 
Chelsea Publishing Co., New York, 1980, xvii+519 pp.
\bibitem[Gur78a]{Gur78a} 
S. Gurak, {\it On the Hasse norm principle}, 
J. Reine Angew. Math. \textbf{299/300} (1978) 16--27.
\bibitem[Gur78b]{Gur78b}
S. Gurak, 
{\it The Hasse norm principle in non-abelian extensions}, 
J. Reine Angew. Math. \textbf{303/304} (1978) 314--318.
\bibitem[Gur80]{Gur80} 
S. Gurak, 
{\it The Hasse norm principle in a compositum of radical extensions}, 
J. London Math. Soc. (2) \textbf{22} (1980) 385--397.
\bibitem[Har20]{Har20} 
D. Harari, {\it Galois cohomology and class field theory}, 
Translated from the 2017 French original by Andrei Yafaev, 
Universitext, Springer, Cham, 2020, xiv+338 pp. 
\bibitem[HHY20]{HHY20} 
S. Hasegawa, A. Hoshi, A. Yamasaki, 
{\it Rationality problem for norm one tori in small dimensions}, 
Math. Comp. \textbf{89} (2020) 923--940. 
\bibitem[Has31]{Has31} 
H. Hasse, 
{\it Beweis eines Satzes und Wiederlegung einer Vermutung \"uber das allgemeine Normenrestsymbol}, 
Nachrichten von der Gesellschaft der Wissenschaften zu G\"ottingen, Mathematisch-Physikalische Klasse 
(1931) 64--69. 
\bibitem[HS21]{HS21} 
S. Hatui, P. Singla, 
{\it On Schur multiplier and projective representations of Heisenberg groups}, 
J. Pure Appl. Algebra \textbf{225} (2021) Paper No. 106742. 
\bibitem[HS53]{HS53}
G. P. Hochschild and J-P. Serre,
\textit{Cohomology of group extensions},
Trans. Amer. Math. Soc. \textbf{74} (1953) 110--134.
\bibitem[Hue81a]{Hue81a} 
J. Huebschmann, 
{\it Automorphisms of group extensions and differentials in the Lyndon-Hochschild-Serre spectral sequence},
J. Algebra \textbf{72} (1981) 296--334. 
\bibitem[Hue81b]{Hue81b} 
J. Huebschmann, 
{\it Group extensions, crossed pairs and an eight term exact sequence}, 
J. reine angew. Math. \textbf{321} (1981) 150--172. 
\bibitem[HS71]{HS71} 
P. J. Hilton, U. Stammbach, 
{\it A course in homological algebra}, 
Grad. Texts in Math., Vol. 4, 
Springer-Verlag, New York-Berlin, 1971, ix+338 pp. 
\bibitem[HKY22]{HKY22}
A. Hoshi, K. Kanai, A. Yamasaki, 
{\it Norm one tori and Hasse norm principle}, 
Math. Comp. \textbf{91} (2022) 2431--2458. 
\bibitem[HKY23]{HKY23}
A. Hoshi, K. Kanai, A. Yamasaki, 
{\it Norm one tori and Hasse norm principle, II: Degree $12$ case}, 
J. Number Theory \textbf{244} (2023) 84--110. 
\bibitem[HKY25]{HKY25}
A. Hoshi, K. Kanai, A. Yamasaki, 
{\it Norm one tori and Hasse norm principle, III: Degree $16$ case}, 
J. Algebra \textbf{666} (2025) 794--820. 
\bibitem[HKY]{HKY} 
A. Hoshi, K. Kanai, A. Yamasaki, 
{\it Hasse norm principle for $M_{11}$ and $J_1$ extensions}, 
arXiv:2210.09119.
\bibitem[HY17]{HY17} 
A. Hoshi, A. Yamasaki, 
{\it Rationality problem for algebraic tori}, 
Mem. Amer. Math. Soc. \textbf{248} (2017) no. 1176, v+215 pp. 
\bibitem[HY21]{HY21} 
A. Hoshi, A. Yamasaki, 
{\it Rationality problem for norm one tori}, 
Israel J. Math. \textbf{241} (2021) 849--867. 
\bibitem[HY24]{HY24}
A. Hoshi, A. Yamasaki, 
{\it Rationality problem for norm one tori for dihedral extensions}, 
J. Algebra \textbf{640} (2024) 368--384. 
\bibitem[HY1]{HY1} 
A. Hoshi, A. Yamasaki, 
{\it Birational classification for algebraic tori}, 
arXiv:2112.02280.  
\bibitem[HY2]{HY2} 
A. Hoshi, A. Yamasaki, 
{\it Rationality problem for norm one tori for $A_5$ and ${\rm PSL}_2(\mathbbm{F}_8)$ extensions}, 
arXiv:2309.16187. 
\bibitem[HY3]{HY3} 
A. Hoshi, A. Yamasaki, 
{\it Hasse norm principle for metacyclic extensions with trivial Schur multiplier}, 
arXiv:2503.14365. 
\bibitem[H\"{u}r84]{Hur84} 
W. H\"{u}rlimann, 
{\it On algebraic tori of norm type}, Comment. Math. Helv. 
\textbf{59} (1984) 539--549.
\bibitem[Kar87]{Kar87} 
G. Karpilovsky, 
{\it The Schur multiplier}, 
London Mathematical Society Monographs. 
New Series, 2. The Clarendon Press, Oxford University Press, New York, 1987.
\bibitem[Kun84]{Kun84} 
B. E. Kunyavskii, 
{\it Arithmetic properties of three-dimensional algebraic tori}, (Russian)
Integral lattices and finite linear groups, 
Zap. Nauchn. Sem. Leningrad. Otdel. Mat. Inst. Steklov. (LOMI) \textbf{116} (1982) 102--107, 163; 
translation in J. Soviet Math. \textbf{26} (1984) 1898--1901. 
\bibitem[LL00]{LL00} 
N. Lemire, M. Lorenz, 
{\it On certain lattices associated with generic division algebras}, 
J. Group Theory \textbf{3} (2000) 385--405.
\bibitem[LeB95]{LeB95} 
L. Le Bruyn, {\it Generic norm one tori},
Nieuw Arch. Wisk. (4) \textbf{13} (1995) 401--407. 
\bibitem[Mac20]{Mac20} 
A. Macedo, 
{\it The Hasse norm principle for $A_n$-extensions}, 
J. Number Theory \textbf{211} (2020) 500--512.
\bibitem[MN22]{MN22} 
A. Macedo, R. Newton, 
{\it Explicit methods for the Hasse norm principle and applications to $A_n$ and $S_n$ extensions}, 
Math. Proc. Cambridge Philos. Soc. \textbf{172} (2022) 489--529. 
\bibitem[Man74]{Man74} 
Yu. I. Manin, 
{\it Cubic forms: algebra, geometry, arithmetic}, 
Translated from the Russian by M. Hazewinkel. 
North-Holland Mathematical Library, Vol. 4. 
North-Holland Publishing Co., Amsterdam-London; 
American Elsevier Publishing Co., New York, 1974, vii+292 pp.
North-Holland Mathematical Library 4, North-Holland, Amsterdam, 1974.
\bibitem[MT86]{MT86}
Yu. I. Manin, M. A. Tsfasman, 
{\it Rational varieties: algebra, geometry, arithmetic}, (Russian)
Uspekhi Mat. Nauk \textbf{41} (1986) 43--94; 
translation in Russian Math. Surveys \textbf{41} (1986) 51--116. 
\bibitem[Mil86]{Mil86}
Milne, J. S.
{\it Arithmetic duality theorems}, 
Perspect. Math., 1, 
Academic Press, Inc., Boston, MA, 1986, x+421 pp.
\bibitem[NSW00]{NSW00}
J. Neukirch, A. Schmidt, K. Wingberg, 
{\it Cohomology of number fields}, 
Grundlehren Math. Wiss., 323, Springer-Verlag, Berlin, 2000, xvi+699 pp.
\bibitem[Ono63]{Ono63} 
T. Ono, 
{\it On the Tamagawa number of algebraic tori}, 
Ann. of Math. (2) \textbf{78} (1963) 47--73.
\bibitem[Ono65]{Ono65} 
T. Ono, 
{\it On the relative theory of Tamagawa numbers}, 
Ann. of Math. (2) \textbf{82} (1965) 88--111.
\bibitem[Pak87]{Pak87} 
J. A. Packer, 
{\it $\bC^\ast$-algebras generated by projective representations of the discrete Heisenberg group}, 
J. Oper. Theory \textbf{18} (1987) 41--66
\bibitem[Pla82]{Pla82} 
V. P. Platonov, 
{\it Arithmetic theory of algebraic groups}, (Russian)
Uspekhi Mat. Nauk \textbf{37} (1982) 3--54; 
translation in Russian Math. Surveys \textbf{37} (1982) 1--62. 
\bibitem[PR94]{PR94} 
V. P. Platonov, A. Rapinchuk, 
{\it Algebraic groups and number theory}, 
Translated from the 1991 Russian original by Rachel Rowen, 
Pure and applied mathematics, 139, Academic Press, 1994. 
\bibitem[Sal99]{Sal99} 
D. J. Saltman, 
{\it Lectures on division algebras}, 
CBMS Regional Conference Series in Mathematics, 94, Published by American Mathematical Society, 
Providence, RI; on behalf of Conference Board of the Mathematical Sciences, Washington, DC, 1999, viii+120 pp. 
\bibitem[San81]{San81} 
J.-J. Sansuc, 
{\it Groupe de Brauer et arithm\'etique des groupes alg\'ebriques lin\'eaires sur un corps de nombres}, (French) 
J. Reine Angew. Math. \textbf{327} (1981) 12--80.
\bibitem[Suz86]{Suz86} 
M. Suzuki, {\it Group theory. II}, 
Translated from the Japanese
rundlehren der Mathematischen Wissenschaften, 248, 
Springer-Verlag, New York, 1986, x+621 pp.
\bibitem[Tat67]{Tat67} 
J. Tate, 
{\it Global class field theory}, 
Algebraic Number Theory (Proc. Instructional Conf., Brighton, 1965), 
162--203, Thompson, Washington, D.C., 1967.  
\bibitem[Vos69]{Vos69} 
V. E. Voskresenskii, 
{\it The birational equivalence of linear algebraic groups}, (Russian) 
Dokl. Akad. Nauk SSSR \textbf{188} (1969) 978--981; 
erratum, ibid. 191 1969 nos., 1, 2, 3, vii; 
translation in Soviet Math. Dokl. \textbf{10} (1969) 1212--1215. 
\bibitem[Vos70]{Vos70} 
V. E. Voskresenskii, 
{\it Birational properties of linear algebraic groups}, (Russian) 
Izv. Akad. Nauk SSSR Ser. Mat. \textbf{34} (1970) 3--19; 
translation in Math. USSR-Izv. \textbf{4} (1970) 1--17.
\bibitem[Vos98]{Vos98} 
V. E. Voskresenskii, 
{\it Algebraic groups and their birational invariants}, 
Translated from the Russian manuscript by Boris Kunyavskii, 
Translations of Mathematical Monographs, 179. 
American Mathematical Society, Providence, RI, 1998.
\bibitem[VK84]{VK84} 
V. E. Voskresenskii,  B. E. Kunyavskii, 
{\it Maximal tori in semisimple algebraic groups}, 
Kuibyshev State Inst., Kuibyshev (1984). 
Deposited in VINITI March 5, 1984, No. 1269-84 Dep. 
(Ref. Zh. Mat. (1984), 7A405 Dep.).
\end{thebibliography}
\end{document}